\newcommand{\R}{{\mathbb{R}}}
\newcommand{\Q}{{\mathbb{Q}}}
\newcommand{\Z}{{\mathbb{Z}}}
\newcommand{\C}{\mathbb{C}}
\newcommand{\PP}{\mathbf{P}}
\newcommand{\EE}{\mathbb{E}}
\def\vecx{{\text{\boldmath$x$}}}
\def\vecu{{\text{\boldmath$u$}}}
\def\vecv{{\text{\boldmath$v$}}}
\def\wv{\widehat{\text{\boldmath$v$}}}
\def\vec0{{\text{\boldmath$0$}}}
\def\scrP{{\mathcal P}}
\def\scrV{{\mathcal V}}
\def\fS{{\mathfrak S}}
\def\one{{\mathbbm{1}}}
\def\sgn{\operatorname{sgn}}
\def\Res{\operatorname{Res}}
\def\supp{\operatorname{supp}}
\def\L{\operatorname{L{}}}
\newcommand{\col}{\: : \:}
\newcommand{\oJ}{\overline{J}}
\newcommand{\ve}{\varepsilon}
\newcommand{\sfrac}[2]{{\textstyle \frac {#1}{#2}}}
\newcommand{\SL}{\mathrm{SL}}
\newtheorem{thm}{Theorem}%
\newtheorem{lem}{Lemma}
\newtheorem{prop}{Proposition}
\newtheorem{cor}{Corollary}
\theoremstyle{remark}
\newtheorem{remark}{Remark}
\numberwithin{equation}{section}
\begin{document}
\title[On the zero-free half-plane of a random Epstein zeta function]{On the location of the zero-free half-plane of a random Epstein zeta function}
\author{Andreas Str\"ombergsson and Anders S\"odergren}
\address{Department of Mathematics, Box 480, Uppsala University, 751 06 Uppsala, Sweden\newline
\rule[0ex]{0ex}{0ex}\hspace{8pt} {\tt astrombe@math.uu.se}\newline
\newline
\rule[0ex]{0ex}{0ex} \hspace{8pt}School of Mathematics, Institute for Advanced Study, Einstein Drive, Princeton,\newline
\rule[0ex]{0ex}{0ex}\hspace{8pt} NJ 08540, USA\newline
\rule[0ex]{0ex}{0ex} \hspace{8pt}{\tt sodergren@math.ias.edu}}  
\date{\today}
\thanks{The first author is supported by a grant from the G\"oran Gustafsson Foundation for
Research in Natural Sciences and Medicine, and also by the Swedish Research Council Grant 621-2011-3629. The second author was funded by a postdoctoral fellowship from the Swedish Research Council. This material is based upon work supported in part by the National Science Foundation under agreement No.\ DMS-0635607. %
 }

\begin{abstract}
In this note we study, for a random lattice $L$ of large dimension $n$, the supremum of the real parts of the zeros of the Epstein zeta function $E_n(L,s)$ and prove that this random variable scaled by $n^{-1}$ has a limit distribution, which we give explicitly. This limit distribution is studied in some detail; in particular we give an explicit formula for its distribution function. Furthermore, we obtain a limit distribution for the frequency of zeros of $E_n(L,s)$ in vertical strips contained in the half-plane $\Re s>\frac n2$.
\end{abstract}

\maketitle

\section{Introduction}

Let $X_n$ denote the space of all $n$-dimensional lattices $L\subset\R^n$ of covolume one.
For $L\in X_n$ and $\Re s>\frac{n}{2}$, the Epstein zeta function is defined by
\begin{align}\label{epsteindefinition}
E_n(L,s)={\sum_{\vecv\in L}}'|\vecv|^{-2s},
\end{align}
where $'$ denotes that the zero vector should be omitted. $E_n(L,s)$ has an analytic continuation to $\C$ except for a simple pole at $s=\frac{n}{2}$ with residue $\pi^{\frac n2}\Gamma(\frac{n}{2})^{-1}$. Furthermore, $E_n(L,s)$ satisfies the functional equation
\begin{align}\label{FCEQU}
F_n(L,s)=F_n(L^*,\sfrac{n}{2}-s),
\end{align}
where $F_n(L,s):=\pi^{-s}\Gamma(s)E_n(L,s)$ and $L^*$ is the dual lattice of $L$.

The Epstein zeta function is in many ways analogous to the Riemann zeta function. In particular we have the relation
\begin{align*}
E_1(\Z,s)=2\zeta(2s).
\end{align*}
Because of this analogy and for other related reasons, many studies have been made regarding 
the location of the zeros of $E_n(L,s)$.
From \eqref{FCEQU} it is clear that $E_n(L,s)$ has a ``trivial'' zero at each point $s=-1,-2,-3,\ldots$,
just like $\zeta(2s)$, and the remaining nontrivial zeros of $E_n(L,s)$ are in bijective correspondence with
the nontrivial zeros of $E_n(L^*,s)$ under the map $s\mapsto \frac n2-s$.
However, the Riemann hypothesis for $E_n(L,s)$ generally fails:
$E_n(L,s)$ typically has many nontrivial zeros which do not lie on the critical line $\Re s=\frac n4$.
Cf.\ \cite{DH}, \cite{bateman}, \cite{stark}, \cite{terras1}, \cite{terras2}, \cite{terras3}, \cite{steuding}.

We denote by $N_L(T)$ the number of nontrivial zeros (counting multiplicity) of $E_n(L,s)$ with
$|\Im s|\leq T$. Then $N_L(T)$ satisfies the following Riemann-\mbox{von Mangoldt} type asymptotics (\cite{steuding}):
\begin{align}\label{STEUDINGRVMFORMULA}
N_L(T)=\frac{2T}\pi\log\frac T{\pi e\, m(L) m(L^*)}+O_L(\log T) \qquad\text{as }\: T\to\infty,
\end{align}
where $m(L)$ is the length of the shortest non-zero vector in $L$.

From the point of view of number theory, the most interesting choices of $L$ are those for which
the Gram matrix for some (and thus any) $\Z$-basis of $L$ is proportional to an integer matrix.
We call these lattices \textit{rational.}
In particular when $n=2$ many results have been obtained regarding the zeros of $E_2(L,s)$ for rational $L$ corresponding to integral quadratic forms with a fundamental discriminant.
It was conjectured by H.\ L.\ Montgomery that in this case asymptotically $100\%$ of the nontrivial zeros of $E_2(L,s)$
lie along the critical line $\Re s=\frac12$.
This was proved conditionally,
assuming the Generalized Riemann Hypothesis and a weak form of well-spacing for the zeros of 
$L$-functions attached to ideal class characters,
by Bombieri and Hejhal in \cite{BH2}.
Furthermore, Selberg has proved unconditionally, in still unpublished work
(cf.\ \cite[p.\ 553]{hejhal2000} and \cite[pp.\ 225-227]{BG})
that a positive proportion of the zeros do lie on the critical line.
For related results, see also \cite{JS} and \cite{MRS}.

\vspace{5pt}

Our main object of study in the present paper is the supremum of the real parts of the zeros of $E_n(L,s)$, i.e.
\begin{align*}
\sigma_L:=\sup\bigl\{\Re\rho\col E_n(L,\rho)=0\bigr\}.
\end{align*}
In other words, 
$\sigma_L$ gives the precise location of the zero-free right half-plane of $E_n(L,s)$.
One easily shows that $\sigma_L$ exists and is finite %
for any given $L\in X_n$; furthermore $\sigma_L\geq\frac n4$ always holds
(cf., e.g., \cite[p.\ 693 and Thm.\ 1]{steuding}).
Of course, $\sigma_L=\sigma_{L^*}=\frac n4$ %
is equivalent with the Riemann hypothesis for $E_n(L,s)$. 
Note that $\sigma_L$ is lower semicontinuous (and hence Borel measurable),
since %
any zero $s=s_0$ of $E_n(L_0,s)$ gives rise to a nearby zero for all $E_n(L,s)$ with $L$ in a sufficiently small neighborhood of $L_0$
(as follows from a standard application of Rouche's theorem
using the formula \cite[(23)]{SS} for $\pi^{-s}\Gamma(s)E_n(L,s)$). 
We also remark that %
$\sigma_L$ takes arbitrarily large values for any given $n\geq2$
(cf.\ Remark \ref{TILDESIGMALSMOOTHREMARK} in Section \ref{BASICPROPPROOFSEC}).

For any Dirichlet series $f(s)=\sum_{j=1}^\infty e^{-\lambda_j s}$ with exponents
$\lambda_1<\lambda_2<\ldots$ whose pairwise differences do not satisfy any non-trivial linear relation over $\Q$,
the supremum of the real parts of the zeros of $f(s)$ equals the unique
number $\sigma$ for which $e^{-\lambda_1\sigma}=\sum_{j=2}^\infty e^{-\lambda_j\sigma}$;
cf.\ Lemma \ref{BASICLEM1} below. %
This independence condition never holds for $E_n(L,s)$ (e.g.\ since $L$ contains both $2\vecv$ and $4\vecv$ for any 
$\vecv\in L$).
However, we have
\begin{align}\label{primitiverelation2}
E_n(L,s)=2\zeta(2s)\sum_{\vecv\in \widehat L}|\vecv|^{-2s}\qquad (\Re s>\tfrac n2),
\end{align} 
where $\widehat L$ denotes a set containing one representative from each pair $\{\vecv,-\vecv\}$
of \textit{primitive} vectors in $L$;
and it turns out that the Dirichlet series $\sum_{\vecv\in \widehat L}|\vecv|^{-2s}$
satisfies the independence condition
for \textit{$\mu_n$-almost every} lattice $L\in X_n$,
where $\mu_n$ is Siegel's measure (\cite{siegel}) on $X_n$;
see Lemma \ref{BASICLEM2}.
From this we conclude (cf.\ Section \ref{BASICPROPPROOFSEC}):

\begin{prop}\label{BASICPROP}
Let $n\geq2$.
For almost every $L\in X_n$, $\sigma_L$ equals the unique number $\sigma>\frac n2$ which satisfies
$2m(L)^{-2\sigma}  %
=\frac12\zeta(2\sigma)^{-1}E_n(L,\sigma)$.
It follows that for almost every $L\in X_n$, $E_n(L,s)$ has infinitely many zeros with $\Re s>\frac n2$.
\end{prop}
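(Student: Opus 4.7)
The plan is to unwind the primitive decomposition \eqref{primitiverelation2} and then invoke Lemmas \ref{BASICLEM1} and \ref{BASICLEM2}. Write
\[
g(s):=\sum_{\vecv\in\widehat L}|\vecv|^{-2s}=\sum_{j=1}^{\infty}e^{-\lambda_j s},\qquad \lambda_j:=2\log|\vecv_j|,
\]
with $|\vecv_1|<|\vecv_2|<\cdots$ listing the distinct lengths of primitive representatives; by Lemma \ref{BASICLEM2} all these lengths are indeed distinct and the $\lambda_j$ meet the $\Q$-linear independence condition required by Lemma \ref{BASICLEM1}, both on a set of full $\mu_n$-measure, to which I restrict from now on.

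Applying Lemma \ref{BASICLEM1} to $g$ and noting that the shortest non-zero vector in $L$ is automatically primitive so that $\lambda_1=2\log m(L)$, the defining equation $e^{-\lambda_1\sigma}=\sum_{j\geq 2}e^{-\lambda_j\sigma}$ becomes $m(L)^{-2\sigma}=g(\sigma)-m(L)^{-2\sigma}$, i.e., $2m(L)^{-2\sigma}=g(\sigma)$. Substituting $g(\sigma)=E_n(L,\sigma)/(2\zeta(2\sigma))$ from \eqref{primitiverelation2} yields exactly the identity in the proposition. The constraint $\sigma>\frac n2$ is automatic: as $\sigma\searrow\frac n2$ the right-hand side diverges (the pole of $g$ at $\frac n2$, since $\zeta(n)$ is finite for $n\geq 2$) while the left stays bounded, forcing the crossing to occur inside the half-plane of absolute convergence of $g$.

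Next, I identify the $\sigma$ produced by Lemma \ref{BASICLEM1} with $\sigma_L$. Because $\zeta(2s)\neq 0$ on $\Re s>\frac12$ and $\frac n2\geq 1>\frac12$ for $n\geq 2$, zeros of $E_n(L,s)$ and of $g(s)$ coincide on the half-plane $\Re s>\frac12$. Since $\sigma>\frac n2>\frac12$ is approached by zeros of $g$ (hence of $E_n$) in that half-plane, while any zero of $E_n$ with $\Re s\leq\frac12$ has real part bounded above by $\frac12<\sigma$, we conclude $\sigma_L=\sigma$.

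The step I expect to require the most care is the final assertion, that $E_n(L,s)$ admits infinitely many zeros with $\Re s>\frac n2$. My plan is to exploit the $\Q$-linear independence of the $\lambda_j$, which renders $g(s)$ uniformly almost periodic in $\Im s$ on every closed vertical strip inside its half-plane of absolute convergence. A Kronecker-type approximation argument then furnishes, for any fixed zero $s_0$ of $g$ with $\Re s_0>\frac n2$ and any $\varepsilon>0$, arbitrarily large $T$ with $g(z+iT)$ uniformly close to $g(z)$ on the disc $|z-s_0|\leq\varepsilon$; Rouch\'e's theorem then places a zero of $g$ inside the translated disc, producing infinitely many zeros of $E_n$ with $\Re s>\frac n2$.
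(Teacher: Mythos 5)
Your argument is correct and follows the paper's own proof essentially verbatim: restrict to the full-measure set supplied by Lemma \ref{BASICLEM2}, apply Lemma \ref{BASICLEM1} to the primitive-vector Dirichlet series $g(s)$, rewrite the equation $2m(L)^{-2\sigma}=g(\sigma)$ via \eqref{primitiverelation2}, and transfer zeros between $g$ and $E_n(L,\cdot)$ on $\Re s>\frac n2$ using the nonvanishing of $\zeta(2s)$ there. The only remark worth making is that your closing Kronecker--Rouch\'e paragraph is redundant: the final clause of Lemma \ref{BASICLEM1} already yields infinitely many zeros of $g$ in every strip $\sigma_1<\Re s<\sigma_2$ with $\frac n2\leq\sigma_1<\sigma_2\leq\sigma_L$, which is exactly what the second assertion requires.
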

In particular, for small $n$ the formula in Proposition \ref{BASICPROP} makes it
possible to compute $\sigma_L$ numerically for a given generic $L\in X_n$.
We stress, however, that for a lattice $L$ such that
$\sum_{\vecv\in \widehat L}|\vecv|^{-2s}$ does \textit{not} satisfy the linear independence condition
(e.g.\ any rational $L$, cf.\ Remark \ref{RATIONALNONGENERICREMARK} in Section \ref{BASICPROPPROOFSEC}),
the computation of $\sigma_L$ is in general not an easy task.
We mention that Bombieri and Mueller in \cite{BM} have shown how to calculate $\sigma_L$ explicitly for
certain examples of rational lattices $L\in X_2$ (with $\sigma_L>1$),
where they also obtained bounds 
on the asymptotic rate of approach of the zeros of $E_2(L,s)$ to the line $\Re s=\sigma_L$. 
See also \cite{BG} for a related investigation of the supremum of the real parts of the zeros of certain other Dirichlet series.

Our main result concerns the distribution of $\sigma_L$ for a \textit{random} lattice $L$ in large dimension $n$.
The random element $L\in X_n$ will always be chosen according to Siegel's measure $\mu_n$, normalized to be a probability measure.
The present study is motivated by recent investigations \cite{epstein1} of the value distribution of $E_n(L,s)$ for  $\Re s>\frac{n}{2}$ and a $\mu_n$-random lattice $L$ of large dimension $n$, where the following result is established: Let $V_n$ denote the volume of the $n$-dimensional unit ball. Let $\mathcal P$ be a Poisson process on the positive real line with intensity $\frac{1}{2}$ and let $T_1,T_2,T_3,\ldots$ denote the points of $\mathcal P$ ordered so that $0<T_1< T_2< T_3<\cdots$. Then, for any fixed $s\in\C$ with $\Re s>\frac{1}{2}$,
\begin{align}\label{EPSTEINMAINRES}
V_n^{-2s}E_n(\cdot,ns)\xrightarrow[]{\textup{ d }}
2\sum_{j=1}^\infty T_j^{-2s}
\qquad\text{as }\: n\to\infty,
\end{align} 
i.e.\ the random variable $V_n^{-2s}E_n(\cdot,ns)$ converges in distribution to $2\sum_{j=1}^\infty T_j^{-2s}$.

The proof of \eqref{EPSTEINMAINRES} is built on a result \cite{poisson} which provides the connection between the lengths of lattice vectors appearing in the formula \eqref{epsteindefinition} and the points of the Poisson process $\mathcal P$. Since this result is an important ingredient also in the present investigation we recall it here. Given a lattice $L\in X_n$, we order its non-zero vectors by increasing lengths as $\pm\vecv_1,\pm\vecv_2,\pm\vecv_3,\ldots$, set $\ell_j=|\vecv_j|$ 
(thus $0<\ell_1\leq \ell_2\leq\ldots$), and define
\begin{align}\label{volumes}
 \mathcal V_j(L):=%
 V_n\ell_j^n\,,
\end{align}
so that $\mathcal V_j(L)$ is the volume of an $n$-dimensional ball of radius $\ell_j$. The main result in \cite{poisson} states that, as $n\to\infty$, the volumes $\{\mathcal V_j(L)\}_{j=1}^{\infty}$ determined by a random lattice $L\in X_n$ converges in distribution to the points $\{T_j\}_{j=1}^{\infty}$ of the  Poisson process $\mathcal P$ on the positive real line with constant intensity $\frac{1}{2}$.

In view of the last two paragraphs, together with %
Proposition \ref{BASICPROP} and the fact that
$\zeta(2\sigma)\to1$ as $\sigma\to\infty$, it seems reasonable to expect that
as $n\to\infty$, $n^{-1}\sigma_L$ should tend in distribution to %
\begin{align}\label{SIGMATJDEF}
\sigma_{\{T_j\}}:=\Bigl[\text{the unique $\sigma>\tfrac12$ satisfying $T_1^{-2\sigma}=\sum_{j=2}^\infty T_j^{-2\sigma}$}\Bigr].
\end{align}
(We will show in Section \ref{MAINRESULT1PROOFSEC} that $\sigma_{\{T_j\}}$ is a well-defined random variable.)
Our first theorem states that this is indeed the case.
\begin{thm}\label{mainresult}
If $L$ is taken at random in $X_n$ according to $\mu_n$, then
\begin{align*}
n^{-1}\sigma_L\xrightarrow[]{\textup{ d }}\sigma_{\{T_j\}}\qquad\text{as }\: n\to\infty.
\end{align*}
\end{thm}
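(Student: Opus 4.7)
The plan is to apply Proposition \ref{BASICPROP} to rewrite $\sigma_L$ as the solution of an explicit equation in the volumes $\mathcal V_j(L)$, and then to transfer the limit via the Poisson process convergence of $\{\mathcal V_j(L)\}_{j\geq 1}$ recalled above.

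Starting from the identity in Proposition \ref{BASICPROP}, using $m(L)=\ell_1$ and $E_n(L,\sigma) = 2\sum_{j\geq 1}\ell_j^{-2\sigma}$, and then performing the substitutions $\ell_j = (\mathcal V_j(L)/V_n)^{1/n}$ and $\sigma = n\tau$, one finds that for almost every $L \in X_n$ the quantity $\tilde\sigma_L := n^{-1}\sigma_L$ is the unique $\tau > \tfrac12$ satisfying
\begin{align*}
\bigl(2\zeta(2n\tau)-1\bigr)\mathcal V_1(L)^{-2\tau} = \sum_{j=2}^{\infty}\mathcal V_j(L)^{-2\tau}.
\end{align*}
As $n\to\infty$, the factor $2\zeta(2n\tau)-1$ converges to $1$ uniformly on compact subsets of $(\tfrac12,\infty)$, and the formal limit of the above equation is exactly the equation defining $\sigma_{\{T_j\}}$.

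I would first verify that $\sigma_{\{T_j\}}$ is almost surely well-defined. Since the gaps of $\mathcal P$ are i.i.d.\ exponentials of mean $2$, the strong law yields $T_j/j \to 2$ a.s., so $\sum_{j\geq 1} T_j^{-2\tau}$ converges a.s.\ for every $\tau > \tfrac12$; and a log-derivative comparison shows that $T_1^{-2\tau}/\sum_{j\geq 2}T_j^{-2\tau}$ is strictly increasing in $\tau$, tending to $0$ as $\tau\downarrow\tfrac12$ and to $\infty$ as $\tau\to\infty$, yielding a unique solution.

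To transfer the convergence I would truncate. For $K\geq 2$, let $\tilde\sigma_L^{(K)}$ and $\sigma_{\{T_j\}}^{(K)}$ denote the unique solutions when the sum on the right is cut at $j=K$. For each fixed $K$, the joint convergence $(\mathcal V_1(L),\ldots,\mathcal V_K(L)) \xrightarrow{\textup{d}} (T_1,\ldots,T_K)$ from \cite{poisson}, together with the continuous dependence of the truncated solution on its data and the uniform limit $2\zeta(2n\tau)-1\to 1$, yields $\tilde\sigma_L^{(K)} \xrightarrow{\textup{d}} \sigma_{\{T_j\}}^{(K)}$ via the continuous mapping theorem. Monotonicity in $K$ gives $\sigma_{\{T_j\}}^{(K)}\uparrow \sigma_{\{T_j\}}$ almost surely, and a standard triangle argument then reduces the theorem to the uniform tail bound
\begin{align*}
\lim_{K\to\infty}\sup_n \mu_n\bigl(|\tilde\sigma_L^{(K)}-\tilde\sigma_L|>\varepsilon\bigr) = 0 \qquad(\forall\,\varepsilon>0).
\end{align*}

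The main obstacle is this last uniform estimate. It requires controlling the tail $\sum_{j>K}\mathcal V_j(L)^{-2\tau}$ with high $\mu_n$-probability, uniformly in $n$ and for $\tau$ in a compact subset of $(\tfrac12,\infty)$. The natural tool is a Siegel--Rogers mean value identity expressing $\EE_{\mu_n}\sum_{\vecv\in L}f(|\vecv|)$ (sum over non-zero vectors) as an explicit integral over $\R^n$; applied to test functions supported away from zero and combined with the tail asymptotics of $\mathcal V_j(L)$ expected from the Poisson limit, this should yield bounds uniform in $n$, closing the argument.
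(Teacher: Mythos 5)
Your reduction via Proposition \ref{BASICPROP} to the equation $(2\zeta(2n\tau)-1)\mathcal V_1(L)^{-2\tau}=\sum_{j\geq2}\mathcal V_j(L)^{-2\tau}$ is exactly the paper's starting point, and the $K$-truncation scheme is a legitimate route in principle; but the decisive step -- the uniform estimate $\lim_{K\to\infty}\sup_n\mu_n\bigl(|\tilde\sigma_L^{(K)}-\tilde\sigma_L|>\varepsilon\bigr)=0$ -- is precisely where all the work lies, and you only name a tool and assert that it ``should'' close the argument. Two separate things would have to be proved there. First, a tail bound uniform in $n$: $\sum_{j>K}\mathcal V_j(L)^{-2\tau}$ small with high probability for $\tau\geq\tfrac12+\varepsilon$; this part is indeed accessible via Siegel's mean value formula together with a high-probability lower bound on $\mathcal V_{K+1}(L)$. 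Second, and missing entirely from your sketch, a quantitative stability estimate for the root: writing the equation as $2\zeta(2n\tau)-1=\sum_{j\geq2}(\mathcal V_j/\mathcal V_1)^{-2\tau}$, the decrease of the right-hand side over an interval of length $\varepsilon$ beyond $\tilde\sigma_L^{(K)}$ is only of order $1-(\mathcal V_2(L)/\mathcal V_1(L))^{-2\varepsilon}$, which degenerates when $\mathcal V_2/\mathcal V_1$ is close to $1$; so absorbing even a small tail requires, uniformly in $n$, that $\mathcal V_2/\mathcal V_1$ be bounded away from $1$ outside an event of small probability. There are also smaller issues to patch: for small $K$ the truncated equation need not have a root $\tau>\tfrac12$ at all (the truncated series stays bounded as $\tau\downarrow\tfrac12$), and the map $(\mathcal V_1,\dots,\mathcal V_K)\mapsto\tilde\sigma_L^{(K)}$ depends on $n$ through $\zeta(2n\tau)$, so the plain continuous mapping theorem does not apply verbatim.

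The paper avoids the truncation in $j$ altogether. Convergence in distribution of $n^{-1}\sigma_L$ is equivalent to convergence of the probability of $\{n^{-1}\sigma_L>c\}$ for each fixed $c$, and by the monotonicity underlying Proposition \ref{BASICPROP} this event equals $\bigl\{F_n(L,c)>(1-\zeta(2cn)^{-1})\sum_j\mathcal V_j(L)^{-2c}\bigr\}$ with $F_n(L,c)=-\mathcal V_1(L)^{-2c}+\sum_{j\geq2}\mathcal V_j(L)^{-2c}$. One then needs only the one-dimensional convergence $F_n(\cdot,c)\xrightarrow{\textup{d}}F(c)$ (Lemma \ref{convergence}, a direct adaptation of \cite[Thm.\ 1]{epstein1}, where the tail of the series is already handled), the fact that $F(c)$ has no atoms (Lemma \ref{BOUNDARYOKLEM}, which falls out of the characteristic-function computation in Section \ref{EXPLFORMULATHMSEC}), and a tightness bound on $\sum_j\mathcal V_j(L)^{-2c}$ to absorb the $\zeta$-correction. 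If you want to keep your framework, the cleanest repair is to replace the truncation of the root by this reduction to a fixed-$c$ event about a single real random variable.
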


\vspace{5pt}

By similar techniques we also obtain a limit distribution statement concerning the
\textit{frequency} of zeros of $E_n(L,s)$ in arbitrary vertical strips to the right of $\Re s=\frac n2$.
For any %
$\sigma_1<\sigma_2$ and $\tau_1<\tau_2$,
let $N_L(\sigma_1,\sigma_2;\tau_1,\tau_2)$ be the number of zeros of $E_n(L,s)$ in the rectangle
$s\in(\sigma_1,\sigma_2)\times(\tau_1,\tau_2)$, counting multiplicity.
It follows from Jessen \cite[Satz A]{Jessen1933} that for each $L\in X_n$, 
and for any fixed numbers $\sigma_1,\sigma_2\in(\frac n2,\infty)\setminus\fS_L$, $\sigma_1<\sigma_2$,
where $\fS_L$ is a certain finite or countable set of exceptions,
the limit
\begin{align}\label{HLDEF}
H_L(\sigma_1,\sigma_2):=\lim_{\tau_2-\tau_1\to\infty}\frac{N_L(\sigma_1,\sigma_2;\tau_1,\tau_2)}{\tau_2-\tau_1}
\end{align}
exists.
For $L$ satisfying the independence condition discussed above
(recall that this holds for $\mu_n$-almost every $L\in X_n$), $\fS_L$ is empty, i.e.\
the limit \eqref{HLDEF} exists for \textit{all} $\frac n2<\sigma_1<\sigma_2$, and 
we furthermore have
\begin{align}\label{HLREL}
H_L(\sigma_1,\sigma_2)=\int_{\sigma_1}^{\sigma_2}\nu_L(\sigma)\,d\sigma,
\end{align}
where $\nu_L(\sigma)$ is a continuous function on $(\frac n2,\infty)$.
These statements follow from Jessen's work %
\cite{Jessen1934} 
(cf.\ Section \ref{ZERODENSITYLIMITTHMSEC} below).
We remark that in recent work by Lee \cite{lee} and Gonek-Lee \cite{GonekLee}, 
similar asymptotics for the number of zeros of $E_2(L,s)$ are obtained in the more difficult case of a rational $L\in X_2$ 
corresponding to an integral quadratic form with a fundamental discriminant.

Similarly, for almost any realization of the Poisson process $\scrP$,
there is a continuous function $\nu_{\{T_j\}}$ on $(\frac12,\infty)$ such that,
if $N(\sigma_1,\sigma_2;\tau_1,\tau_2)$ denotes
the number of zeros of the Dirichlet series $f_{\{T_j\}}(s):=\sum_{j=1}^\infty T_j^{-2s}$ in the rectangle
$s\in(\sigma_1,\sigma_2)\times(\tau_1,\tau_2)$,
then for any $\frac12<\sigma_1<\sigma_2$, %
\begin{align}\label{NUTJdef}
\lim_{\tau_2-\tau_1\to\infty}\frac{N(\sigma_1,\sigma_2;\tau_1,\tau_2)}{\tau_2-\tau_1}
=\int_{\sigma_1}^{\sigma_2}\nu_{\{T_j\}}(\sigma)\,d\sigma.
\end{align}
Let $C(\frac12,\infty)$ be the set of all real-valued continuous functions on $(\frac12,\infty)$,
provided with the topology of uniform convergence on compacta.

\begin{thm}\label{ZERODENSITYLIMITTHM}
If $L$ is taken at random in $X_n$ according to $\mu_n$, then
\begin{align*}
\bigl(n^2\nu_L(n\,\cdot\,)   %
,n^{-1}\sigma_L\bigr)\xrightarrow[]{\textup{ d }}(\nu_{\{T_j\}},\sigma_{\{T_j\}})
\qquad\text{as }\: n\to\infty,
\end{align*}
in the sense of 
convergence in distribution for random elements in $C(\frac12,\infty)\times\R_{>1/2}$.
\end{thm}

Note that Theorem \ref{ZERODENSITYLIMITTHM} generalizes Theorem \ref{mainresult},
and also implies that the random function $\sigma\mapsto n^2\nu_L(n\sigma)$ %
in $C(\frac12,\infty)$ converges in distribution to $\nu_{\{T_j\}}$.
A consequence of the latter fact is that for any fixed $\frac12<\sigma_1<\sigma_2$,
the real-valued random variable $nH_L(n\sigma_1,n\sigma_2)$ converges in distribution to
$\int_{\sigma_1}^{\sigma_2}\nu_{\{T_j\}}(\sigma)\,d\sigma$.

\vspace{5pt}

Returning to our main objects of study, i.e.\ $\sigma_L$ and its limit $\sigma_{\{T_j\}}$,
we next give  %
an explicit formula for the distribution function of $\sigma_{\{T_j\}}$.
Recall that the lower incomplete gamma function $\gamma(s,z)$ is defined by
\begin{align}\label{INCOMPLETEGAMMADEF}
\gamma(s,z):=\int_0^z u^{s-1}e^{-u}\,du
\end{align}
for $s,z\in\C$ with $\Re s>0$.
In order to make $\gamma(s,z)$ single-valued, we will always keep $z\in\C\setminus\R_{<0}$
(in fact, we will only need to use $z$ with $\Re z\geq0$),
and choose a path of integration in \eqref{INCOMPLETEGAMMADEF} which stays inside this cut plane.
We agree that $|\arg u|<\pi$ for all $u\in\C\setminus\R_{\leq0}$.
Now the function $\gamma(s,z)$ is extended to all $s\in\C\setminus\Z_{\leq0}$, $z\in\C\setminus\R_{<0}$ 
through the recursion formula
\begin{align}\label{INCOMPLETEGAMMAREC}
\gamma(s,z)=\frac{\gamma(s+1,z)+z^se^{-z}}s.
\end{align}

\begin{thm}\label{EXPLFORMULATHM}
For any $c>\frac12$, we have
\begin{align*}
Prob\bigl(\sigma_{\{T_j\}}\leq c\bigr)=\frac12+\frac{2c}{\pi}\int_0^\infty
\Im\left(\frac{e^{(\frac\pi{4c}-y)i}}{\gamma(-\frac1{2c},-iy)}\right)y^{-1-\frac1{2c}}\,dy.
\end{align*}
The integral in the right-hand side is absolutely convergent.
\end{thm}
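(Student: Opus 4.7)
My plan is to reduce the event $\{\sigma_{\{T_j\}}\leq c\}$ to an inequality for a single auxiliary random variable $\tilde Z$, compute the Laplace transform of $\tilde Z$ in closed form via Campbell's formula for the Poisson process $\mathcal P$, and then invert using Gil--Pelaez. Set $\alpha:=\frac1{2c}\in(0,1)$. The function $\sigma\mapsto\sum_{j\geq 2}(T_j/T_1)^{-2\sigma}$ is continuous and strictly decreasing from $+\infty$ (as $\sigma\to\tfrac12^+$) to $0$ (as $\sigma\to\infty$) on $(\tfrac12,\infty)$ a.s.\ (the sum converges because $T_j\asymp 2j$ and $c>\tfrac12$), so \eqref{SIGMATJDEF} gives the a.s.\ equivalence
\[\{\sigma_{\{T_j\}}\leq c\}=\{\tilde Z\leq 1\},\qquad\tilde Z:=\sum_{j\geq 2}(T_j/T_1)^{-2c}.\]
The distribution of $\tilde Z$ is continuous (since, e.g., $(T_2/T_1)^{-2c}$ already has a continuous distribution), so that Gil--Pelaez at the point $1$ returns $P(\tilde Z\leq 1)$ without any atom correction.

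Condition on $T_1=t_1$. By the strong Markov property of the Poisson process, $\{T_j\}_{j\geq 2}$ is then a Poisson process of intensity $\frac12$ on $(t_1,\infty)$, and Campbell's formula yields
\[\EE[e^{-\xi\tilde Z}\mid T_1=t_1]=\exp\Bigl(-\tfrac{t_1}{2}\int_0^\infty(1-e^{-\xi(1+v)^{-2c}})\,dv\Bigr).\]
The substitution $u=(1+v)^{-2c}$ turns the inner integral into $\alpha\int_0^1(1-e^{-\xi u})u^{-1-\alpha}du$; an integration by parts (with antiderivative $-u^{-\alpha}/\alpha$) followed by the recursion \eqref{INCOMPLETEGAMMAREC} applied to $\gamma(1-\alpha,\xi)$ evaluates this to $-1-\alpha\xi^\alpha\gamma(-\alpha,\xi)$. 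Since $\gamma(-\alpha,\xi)<0$ for real $\xi>0$, integrating the conditional Laplace transform against the exponential density $\tfrac12 e^{-t_1/2}$ of $T_1$ reduces to an elementary exponential integral in $t_1$, producing the clean formula
\[\phi(\xi):=\EE[e^{-\xi\tilde Z}]=-\frac{1}{\alpha\xi^\alpha\gamma(-\alpha,\xi)},\qquad\xi>0.\]

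I then pass to the characteristic function $\psi(t)=\phi(-it)$; using the convention $|\arg|<\pi$ (so $(-it)^\alpha=t^\alpha e^{-i\pi\alpha/2}$ for $t>0$) this gives $\psi(t)=-e^{i\pi\alpha/2}/(\alpha t^\alpha\gamma(-\alpha,-it))$. The Gil--Pelaez identity
\[P(\tilde Z\leq 1)=\tfrac12-\tfrac{1}{\pi}\int_0^\infty\Im\bigl(e^{-it}\psi(t)\bigr)\,\frac{dt}{t},\]
together with substitution and unwinding $\alpha=1/(2c)$, reproduces precisely the formula claimed in the theorem (after renaming $t\to y$). For absolute convergence: as $y\to 0^+$, the asymptotic $\gamma(-\alpha,-iy)\sim-\alpha^{-1}(-iy)^{-\alpha}$ (which follows from \eqref{INCOMPLETEGAMMAREC}) forces $\Im\bigl(e^{i(\pi\alpha/2-y)}/\gamma(-\alpha,-iy)\bigr)=O(y^{1+\alpha})$, so the integrand is bounded near zero; and as $y\to\infty$, $\gamma(-\alpha,-iy)\to\Gamma(-\alpha)\ne 0$, so the integrand decays like $y^{-1-\alpha}$.

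The main obstacle is the algebraic identity $\int_0^1(1-e^{-\xi u})u^{-1-\alpha}du=-1/\alpha-\xi^\alpha\gamma(-\alpha,\xi)$: because $\gamma(-\alpha,\cdot)$ admits no convergent integral representation for $\alpha\in(0,1)$, this identification must be routed through the recursion \eqref{INCOMPLETEGAMMAREC} applied to $\gamma(1-\alpha,\xi)$, and it is this single step that injects the incomplete gamma function into the final answer. All remaining work is routine Poisson-process manipulation (Campbell's formula, strong Markov) or standard complex-analytic bookkeeping (Gil--Pelaez, branch choice, tail estimates).
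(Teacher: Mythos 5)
Your proof is correct, and it takes a genuinely different and substantially shorter route than the paper's. The paper conditions on $T_1=\delta$, establishes that $\sum_{j\geq2}T_j^{-a}$ has a conditional density via integrability of the conditional characteristic function (Proposition \ref{INFDIVEXPLPROP}), and then writes $\PP(\sigma_{\{T_j\}}>c)$ as a triple iterated integral (Fourier inversion, then over the threshold $x>\delta^{-a}$, then over $\delta$) which is \emph{not} absolutely convergent; most of the paper's proof is spent justifying the interchanges of integration by delicate truncations and Riemann--Lebesgue arguments. You instead normalize by $T_1$ to form the single scalar $\tilde Z=\sum_{j\geq2}(T_j/T_1)^{-2c}$, observe that integrating the conditional Laplace transform against the exponential law of $T_1$ collapses to an elementary integral, and obtain the closed form $\EE[e^{-\xi\tilde Z}]=-1/(\alpha\xi^\alpha\gamma(-\alpha,\xi))$; a single application of Gil--Pelaez then yields the theorem, with an integral that is absolutely convergent from the outset. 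The incomplete-gamma content enters through exactly the same recursion identity \eqref{INCOMPLETEGAMMAREC} as in the paper (your $-1-\alpha\xi^\alpha\gamma(-\alpha,\xi)$ is the paper's $t^{1/a}\Phi_a(t\delta^{-a})$ in disguise), so the two proofs meet at the same special-function identity but package the probability entirely differently; the paper's longer route does, however, produce the objects $\varphi_{a,\delta}$ and $\Phi_a$ that are reused in Lemma \ref{BOUNDARYOKLEM}, in the proof of Corollary \ref{EXPLFORMULACOR}, and in the appendix. Two points you should tighten: (i) continuity of the law of $\tilde Z$ at $1$ does not follow merely from one summand of a dependent sum being continuous --- condition on $T_1$ and $\{T_j\}_{j\geq3}$ so that $T_2$ has a conditional density, or invoke the conditional-density argument of Lemma \ref{BOUNDARYOKLEM}; (ii) the passage $\psi(t)=\phi(-it)$ deserves a sentence: writing $\phi(\xi)=1/(1+I(\xi))$ with $I(\xi)=\int_0^\infty(1-e^{-\xi(1+v)^{-2c}})\,dv$ entire in $\xi$ and $\Re(1+I(\xi))\geq1$ for $\Re\xi\geq0$, the closed form extends analytically to $\Re\xi>0$ and continuously to the imaginary axis, which simultaneously shows $\gamma(-\alpha,-iy)\neq0$ there. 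Neither point is a gap; both are one-line repairs.
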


\begin{cor}\label{EXPLFORMULACOR}
The random variable $\sigma_{\{T_j\}}$ has a continuous density function $f(c)$ %
given explicitly in \eqref{DENSITY} below. It satisfies  %
\begin{align}\label{ASYMPTCSMALLRES}
f(c)=2-K_1(c-\tfrac12)^2+O((c-\tfrac12)^3)\qquad\text{as }\: c\to\tfrac12
\end{align}
and
\begin{align}\label{ASYMPTCLARGERES}
f(c)=K_2c^{-3}+O\left(c^{-4}\right)\qquad\text{as }\: c\to\infty,
\end{align}
where $K_1=39.47841\ldots$ and $K_2=0.822467\ldots$
are positive real numbers given explicitly below in
\eqref{K1DEF} and  \eqref{KINT}, respectively
(cf.\ also \eqref{FKYDEF}, \eqref{F1F2FORMULAS}, \eqref{MYPIDEF} and \eqref{HFORMULA}).
\end{cor}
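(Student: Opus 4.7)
The plan is to obtain $f(c)$ by differentiating the formula in Theorem \ref{EXPLFORMULATHM} under the integral sign, and then to extract the asymptotic behaviour at the two endpoints by Taylor expansion in $c-\tfrac12$ and in $1/c$, respectively. The absolute convergence asserted in Theorem \ref{EXPLFORMULATHM} is the starting point; one verifies that the $c$-derivative of the integrand is dominated by an integrable function uniformly on compact subsets of $(\tfrac12,\infty)$, using the recursion \eqref{INCOMPLETEGAMMAREC} to control $\partial_c \gamma(-\tfrac1{2c},-iy)$ in terms of $\gamma(1-\tfrac1{2c},-iy)$, which is entire in its first argument near $s=1$. This both legitimates the differentiation and exhibits $f$ as a continuous function of $c$, producing the expression \eqref{DENSITY}.

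For the expansion \eqref{ASYMPTCSMALLRES} near $c=\tfrac12$, the natural step is to apply \eqref{INCOMPLETEGAMMAREC} once to write
\[
\gamma\bigl(-\tfrac1{2c},-iy\bigr)=-2c\bigl(\gamma(1-\tfrac1{2c},-iy)+(-iy)^{-\frac1{2c}}e^{iy}\bigr),
\]
which separates a piece jointly analytic in $(c,y)$ at $c=\tfrac12$ from the singular piece. Expanding every $c$-dependent factor of the integrand to second order in $\ve=c-\tfrac12$ and integrating term by term, the constant coefficient evaluates to $f(\tfrac12)=2$; the linear coefficient vanishes after writing things in terms of $\Im$ and using a parity argument on the remaining oscillatory integrals; and the quadratic coefficient produces $-K_1$, where $K_1$ is an explicit integral (which the numerics strongly suggest equals $4\pi^2$). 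The interchange of expansion and integration must be justified by termwise bounds derived from the same decay estimates that underpin Theorem \ref{EXPLFORMULATHM}.

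For the tail \eqref{ASYMPTCLARGERES} as $c\to\infty$, set $\ve=\tfrac1{2c}$ and expand in $\ve$. Here $y^{-1-\ve}=y^{-1}(1-\ve\log y+O(\ve^2))$, and applying \eqref{INCOMPLETEGAMMAREC} once gives
\[
\frac{1}{\gamma(-\ve,-iy)}=-\ve\Bigl(\gamma(1-\ve,-iy)+(-iy)^{-\ve}e^{iy}\Bigr)^{-1}.
\]
Since $\gamma(1,-iy)=1-e^{iy}$, the leading $e^{iy}$ in the numerator of the integrand cancels against one in the denominator, producing an expansion whose first nonvanishing $\Im$-contribution is of order $\ve^2$; combined with the prefactor $\tfrac{2c}{\pi}=\tfrac1{\pi\ve}$ and the derivative in $c$, this yields a leading term of order $c^{-3}$. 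Computing the resulting explicit improper integral gives $K_2$ (numerically $\pi^2/12=\zeta(2)/2$), with the error $O(c^{-4})$ coming from the next term in the Taylor expansion in $\ve$.

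The main obstacle will be the $c\to\tfrac12^+$ analysis: the integrand is genuinely oscillatory in $y$, the point $s=-\tfrac1{2c}$ approaches the pole $s=-1$ of $\gamma(s,z)$, and the linear-in-$\ve$ term must be shown to vanish exactly, not merely to cancel to leading order. Ensuring that the order of expansion, imaginary part, and integration can be freely interchanged, and obtaining the explicit closed form for $K_1$, is where the bulk of the technical work will lie.
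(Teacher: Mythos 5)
Your overall strategy coincides with the paper's: differentiate \eqref{EXPLFORMULATHMPFFINAL} under the integral sign (justified by a uniform integrable majorant) to get \eqref{DENSITY}, then expand at the two endpoints. Your sketch of the $c\to\infty$ case is essentially the paper's argument: writing $a=2c$, the coefficient of $a^{-2}$ vanishes by a contour shift (the paper shows $\Im\int_0^\infty \frac{F_1(y)}{y}e^{-iy}\,dy=0$ by moving the contour into the lower half-plane), and the $a^{-3}$ coefficient gives $K_2$ as in \eqref{KINT}. One caveat even there: the power-series expansion of $\Phi_a(y)^{-1}$ in $a^{-1}$ is only controlled for bounded $y$, so the paper must use a separate integration-by-parts representation \eqref{ALARGEPF1}--\eqref{ALARGEPF6} for large $y$ and handle the only conditionally convergent tails by exploiting oscillation; "the same decay estimates that underpin Theorem \ref{EXPLFORMULATHM}" do not suffice as stated, since the termwise integrands decay only like $(\log y)/y$.

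The genuine gap is in your $c\to\frac12$ plan. "Expanding every $c$-dependent factor to second order in $\ve=c-\frac12$ and integrating term by term" cannot work, because the expansion of the integrand in powers of $a-1$ is wildly non-uniform near $y=0$: the coefficient functions $\widetilde H_\ell(y)$ in the paper's expansion behave like $y^{-2-\ell}$ (indeed $\Im\widetilde H_1(y)=2y^{-2}+\cdots$, $\Im\widetilde H_2(y)=3y^{-4}+\cdots$, up to $\Im\widetilde H_6(y)=7y^{-8}+\cdots$), so every termwise integral over $(0,\infty)$ diverges, starting with the linear term. The order of expansion and integration genuinely cannot be interchanged here. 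The paper's mechanism is to split the $y$-integral at the $c$-dependent point $y=(a-1)^{1/2}$, use a completely different expansion (based on $\Phi_a(y)=\frac{y^{-1/a}(a-1-iy)}{a-1}\{\cdots\}$) below the cut, carry the expansion above the cut to order $N=6$, and verify that the resulting half-integer powers $(a-1)^{-1/2},(a-1)^{1/2},(a-1)^{3/2},(a-1)^{5/2}$ cancel exactly between \eqref{ASMALLPF3} and \eqref{ASMALLPF4}; even the constant $f(\frac12)=2$ arises as $\frac2\pi(\frac\pi2+\frac\pi2)$ from the two regions together. None of this is anticipated by a second-order Taylor expansion, and the vanishing of the linear coefficient is again a contour-shift identity, not a parity argument. (As a side remark, your guesses $K_1=4\pi^2$ and $K_2=\pi^2/12$ are consistent with the decimals $39.47841\ldots$ and $0.822467\ldots$; the paper does not assert these closed forms.)
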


\begin{figure}\label{GRAPH}
\begin{center}
\begin{minipage}{0.5\textwidth}
\unitlength0.1\textwidth
\begin{picture}(10,9)(0,0)
\put(-2,9){\includegraphics[width=0.9\textwidth,angle=270]{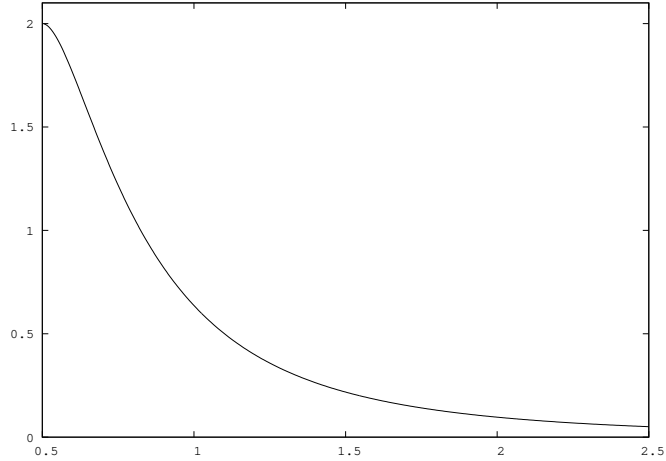}}
\end{picture}
\end{minipage}
\end{center}
\caption{A graph of the density function of $\sigma_{\{T_j\}}$.
It was computed using the method described in Appendix \ref{NUMSEC} (see also \cite[numdensity.mpl]{sts}).}\label{DENSITYGRAPH}
\end{figure}

In Appendix \ref{NUMSEC}, we also give formulas for the distribution and density functions of $\sigma_{\{T_j\}}$
obtained through the residue theorem, and discuss numerical evaluation.
See Figure 1 for a graph of the probability density function of $\sigma_{\{T_j\}}$ generated using the formulas in Appendix \ref{NUMSEC} (cf. \cite[numdensity.mpl]{sts}).

\vspace{5pt}

We conclude by remarking that, as is rather clear from the previous discussion,
the random variable $\sigma_{\{T_j\}}$ can also be interpreted as the
supremum of the real parts of the zeros of the \textit{random Dirichlet series} $f_{\{T_j\}}(s)=\sum_{j=1}^\infty T_j^{-2s}$.
Indeed, by the strong law of large numbers the series $f_{\{T_j\}}(s)$ has, with probability one, abscissa of absolute convergence $\sigma_0=\frac12$ and satisfies %
$\lim_{\sigma\to\frac12+}f_{\{T_j\}}(\sigma)>2T_1^{-1}$;
also with probability one the numbers $2(\log T_j-\log T_1)$, $j=2,3,\ldots$, are linearly independent over $\Q$;
this means that Lemma \ref{BASICLEM1} below applies almost surely and the claim follows. 
Hence Theorem \ref{EXPLFORMULATHM} and Corollary \ref{EXPLFORMULACOR} 
describe explicitly the distribution of the location of the zero-free right half-plane
of $f_{\{T_j\}}(s)$. %
It would be interesting to also seek a more explicit understanding of the random function
$\nu_{\{T_j\}}$ (cf.\ \eqref{NUTJdef} and Theorem~\ref{ZERODENSITYLIMITTHM}), 
which describes the density of zeros of $f_{\{T_j\}}(s)$ in any vertical strip to the right of $\Re s=\frac12$.

There exists a vast literature on random Dirichlet series; however, we are not aware of many results
pertaining to their zeros.
Cf., however, Edelman and Kostlan \cite[\S\S 3.2.5, 8.2]{EK}, regarding the zeros of the random Dirichlet series
$\sum_{n=1}^\infty a_n n^{-s}$, where $a_n$ are independent standard normal random variables.

\subsection{Acknowledgments}
We are grateful to Daniel Fiorilli and Svante Janson for inspiring discussions and helpful remarks. 
We are also grateful to the referee for asking about the density of zeros;
this inspired us to add Theorem \ref{ZERODENSITYLIMITTHM} to the paper.
The second author thanks the Institute for Advanced Study for providing excellent working conditions.

\section{Proof of Proposition \ref{BASICPROP}}
\label{BASICPROPPROOFSEC}

\begin{lem}\label{BASICLEM1}
Consider any Dirichlet series $f(s)=\sum_{j=1}^\infty e^{-\lambda_j s}$ with real exponents
$\lambda_1<\lambda_2<\ldots$ and abscissa of absolute convergence $\sigma_0<\infty$.
Assume $\lim_{\sigma\to\sigma_0^+}f(\sigma)>2e^{-\lambda_1\sigma_0}$.
Then the equation $f(\sigma)=2e^{-\lambda_1\sigma}$ has exactly one real root $\sigma=\sigma_f>\sigma_0$.
If furthermore all the differences $\lambda_j-\lambda_1$ for $j=2,3,\ldots$ are linearly independent over $\Q$,
then $\sigma_f$ equals the supremum of the real parts of the zeros of $f(s)$,
and the function $f(s)$ has infinitely many zeros in any strip $\sigma_1<\Re s<\sigma_2$
with $\sigma_0\leq\sigma_1<\sigma_2\leq\sigma_f$.
\end{lem}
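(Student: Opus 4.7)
The plan is to break the proof into three parts.

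For the existence/uniqueness of $\sigma_f$, I would introduce the auxiliary function
\[
h(\sigma) := e^{\lambda_1 \sigma} f(\sigma) = 1 + \sum_{j \geq 2} e^{-\mu_j \sigma}, \qquad \mu_j := \lambda_j - \lambda_1 > 0,
\]
and observe that $h$ is continuous and strictly decreasing on $(\sigma_0, \infty)$, with $\lim_{\sigma \to \infty} h(\sigma) = 1$ and (by hypothesis) $\lim_{\sigma \to \sigma_0^+} h(\sigma) > 2$. The intermediate value theorem then produces a unique $\sigma_f \in (\sigma_0, \infty)$ with $h(\sigma_f) = 2$, equivalent to $f(\sigma_f) = 2 e^{-\lambda_1 \sigma_f}$. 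For the zero-free region $\Re s > \sigma_f$, a reverse-triangle-inequality gives
\[
|f(s)| \geq e^{-\lambda_1 \Re s} - \sum_{j \geq 2} e^{-\lambda_j \Re s} = e^{-\lambda_1 \Re s}\bigl(2 - h(\Re s)\bigr) > 0,
\]
since $h(\Re s) < 2$; so the supremum of real parts of zeros is $\leq \sigma_f$.

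The third part (existence of infinitely many zeros in any sub-strip $\sigma_1 < \Re s < \sigma_2 \leq \sigma_f$ with $\sigma_1 \geq \sigma_0$) is where the linear independence hypothesis enters. Write $F(s) := e^{\lambda_1 s} f(s) = 1 + \sum_{j \geq 2} e^{-\mu_j s}$ (same zeros as $f$) and fix $\sigma' \in (\sigma_1, \sigma_2) \cap (\sigma_0, \sigma_f)$. Since $\mu_2, \mu_3, \ldots$ are $\mathbb{Q}$-linearly independent, Kronecker's theorem tells us that for every $N$ the orbit $t \mapsto (e^{-i \mu_2 t}, \ldots, e^{-i \mu_N t})$ is dense in $(\mathbb{R}/2\pi\mathbb{Z})^{N-1}$. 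Combining this with the tail bound $\sum_{j > N} e^{-\mu_j \sigma'} < \varepsilon$ for $N$ large, one obtains arbitrarily large $t_0 \in \mathbb{R}$ such that, to any prescribed accuracy,
\[
F(\sigma' + i t_0) \; \approx \; 1 - \sum_{j \geq 2} e^{-\mu_j \sigma'} \; = \; 2 - h(\sigma') \; < \; 0,
\]
and \emph{simultaneously}
\[
F'(\sigma' + i t_0) \; \approx \; \sum_{j \geq 2} \mu_j e^{-\mu_j \sigma'} \; > \; 0.
\]

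To turn these approximate zeros into genuine zeros of $F$, I would apply Rouch\'e's theorem on the disk $D(s_0, r)$ centred at $s_0 := \sigma' + i t_0$ with $r$ of order $|F(s_0)|/|F'(s_0)|$, comparing $F$ with its linearization $F(s_0) + F'(s_0)(s - s_0)$. Since $|F''|$ is uniformly bounded on $\Re s \geq \sigma_0 + \tfrac12(\sigma' - \sigma_0)$ (by Cauchy's estimate applied to the absolutely convergent Dirichlet series on a fixed disk), the quadratic error in the Taylor expansion is negligible on $|s - s_0| = r$, so $|F(s) - F(s_0)| > |F(s_0)|$ there. Rouch\'e then yields a zero of $F$ in $D(s_0, r)$; for $\varepsilon$ small enough this zero lies in the strip $\sigma_1 < \Re s < \sigma_2$, and letting $t_0 \to \infty$ along a Kronecker-produced sequence yields infinitely many such zeros. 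The main obstacle is the Rouch\'e step, which requires a uniform positive lower bound on $|F'(s_0)|$ along the sequence; this is precisely why it is important to choose $t_0$ so that \emph{all} the phases $e^{-i\mu_j t_0}$ are simultaneously close to $-1$, forcing $F'(s_0)$ close to the fixed positive real number $\sum_{j \geq 2} \mu_j e^{-\mu_j \sigma'}$.
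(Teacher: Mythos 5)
Your first two steps (monotonicity of $h(\sigma)=e^{\lambda_1\sigma}f(\sigma)$ giving a unique $\sigma_f$, and the triangle inequality giving the zero-free half-plane $\Re s>\sigma_f$) are correct and identical to the paper's. The gap is in the third step. The points $s_0=\sigma'+it_0$ you construct are \emph{not} approximate zeros: by design $F(s_0)$ is close to the fixed nonzero number $2-h(\sigma')$, which for $\sigma'$ strictly below $\sigma_f$ is bounded away from $0$. Consequently the disk radius $r\sim|F(s_0)|/|F'(s_0)|$ is a fixed positive constant, not something you can shrink; the Newton--Rouch\'e comparison with the linearization requires $|F(s_0)|\,|F''|\ll|F'(s_0)|^2$ (so that the quadratic remainder $\tfrac12|F''|r^2$ is beaten by $|F'(s_0)|r-|F(s_0)|$ on the circle), and there is no reason this holds. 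Moreover, even granting a zero in $D(s_0,r)$, its real part is only pinned down to within $r\approx(h(\sigma')-2)/\sum_j\mu_je^{-\mu_j\sigma'}$ of $\sigma'$, a quantity that does not depend on your Kronecker accuracy $\varepsilon$; so the assertion that ``for $\varepsilon$ small enough this zero lies in the strip'' is unjustified, and for a narrow strip well to the left of $\sigma_f$ the argument produces nothing. (Your phase choice ``all $e^{-i\mu_jt_0}\approx-1$'' makes the Bohr limit function $1-\sum_je^{-\mu_js}$, whose real zero sits exactly at $\sigma_f$ -- so at best this targets zeros near $\Re s=\sigma_f$, not in an arbitrary substrip.)

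Two ways to close the gap. The paper argues by contradiction: a strip with only finitely many zeros contains a zero-free substrip, and for almost periodic Dirichlet series this forces $\inf_{t}|f(\sigma+it)|>0$ on interior vertical lines (Bohr--Jessen); Kronecker, with phases $\zeta_j$ chosen so that $\sum_{j\ge2}\zeta_je^{-\mu_j\sigma}=-1$ (possible precisely because $\sum_{j\ge2}e^{-\mu_j\sigma}>1$ for $\sigma<\sigma_f$), shows the infimum is $0$. Alternatively, a direct version of your idea works if you choose the phases to make the limit function vanish \emph{exactly}: with $\sum_{j\ge2}\zeta_je^{-\mu_j\sigma'}=-1$ the function $G(s)=1+\sum_{j\ge2}\zeta_je^{-\mu_js}$ has $G(\sigma')=0$ and $G\not\equiv0$, and Kronecker plus the tail bound gives $F(s+i\tau_k)\to G(s)$ uniformly on a small disk around $\sigma'$; Hurwitz (or Rouch\'e against $G$ on a circle where $G\neq0$) then yields zeros of $F$ with real part arbitrarily close to $\sigma'$. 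Either route needs an input you are currently missing -- the almost-periodicity lower bound, or the exact vanishing of the limit function.
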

\begin{remark}\label{INDEPENDENCEEQUREM}
The linear independence condition of the lemma is equivalent to the statement that if $c_1,c_2,\ldots$ are any 
integers all but finitely many vanishing and satisfying $\sum_{n=1}^\infty c_n=0$ and 
$\sum_{n=1}^\infty c_n\lambda_n=0$, then $c_1=c_2=\ldots=0$.
This is also equivalent to the statement that the pairwise differences among $\lambda_1,\lambda_2,\ldots$ do not
satisfy any non-trivial linear relation over $\Q$, i.e.\ if $c_{jk}$ for $1\leq j<k$ are integers all but finitely
many vanishing and satisfying $\sum_{j<k}c_{jk}(\lambda_j-\lambda_k)=0$, then
$\sum_{k=j+1}^\infty c_{jk}=\sum_{k=1}^{j-1}c_{k j}$ for all $j\geq1$.
\end{remark}
\begin{proof}
The fact that the equation $f(\sigma)=2e^{-\lambda_1\sigma}$ has exactly one real root $\sigma=\sigma_f>\sigma_0$
follows since the function $\sigma\mapsto e^{\lambda_1\sigma}f(\sigma)$ for $\sigma>\sigma_0$
is strictly decreasing, tends to $1$ as $\sigma\to\infty$, 
and by assumption tends to a limit which is greater than $2$ as $\sigma\to\sigma_0^+$.
For any $s$ with $\Re s>\sigma_f$ we have
$|\sum_{j=2}^\infty e^{-\lambda_js}|\leq\sum_{j=2}^\infty e^{-\lambda_j\Re s}<e^{-\lambda_1\Re s}
=|e^{-\lambda_1s}|$, and thus $f(s)\neq0$.
Hence it now only remains to prove that $f(s)$ has infinitely many zeros in any strip $\sigma_1<\Re s<\sigma_2$
with $\sigma_0\leq\sigma_1<\sigma_2\leq\sigma_f$.
Assume the contrary; then there even exist some $\sigma_1,\sigma_2$ with $\sigma_0\leq\sigma_1<\sigma_2\leq\sigma_f$
such that $f(s)$ has \textit{no} zero in the strip $\sigma_1<\Re s<\sigma_2$.
By basic facts in complex analysis, this implies that 
\begin{align}\label{BASICLEM1PF1}
\inf_{t\in\R}|f(\sigma+it)|>0
\end{align}
for any fixed $\sigma\in(\sigma_1,\sigma_2)$ 
(cf.\ \cite[\S 4 (Hilfssatz 3)]{BohrJessen} and \cite[\S 3 (Hilfssatz 3)]{Jessen1933}).

On the other hand, for any $\sigma\in(\sigma_1,\sigma_2)$ we have $\sum_{j=2}^\infty e^{(\lambda_1-\lambda_j)\sigma}>1$,
and hence there exist $\zeta_2,\zeta_3,\ldots\in\C$ satisfying $|\zeta_2|=|\zeta_3|=\ldots=1$ and
$\sum_{j=2}^\infty e^{(\lambda_1-\lambda_j)\sigma}\zeta_j=-1$.
It follows from Kronecker's density theorem (cf., e.g., \cite[Prop.\ 1.5.1]{KH}), using our linear independence assumption, that
for any given $J\in\Z_{\geq2}$ and $\ve>0$ there exists $t\in\R$ such that
$|e^{(\lambda_1-\lambda_j)it}-\zeta_j|<\ve$ for all $j\in\{2,\ldots,J\}$.
Applying this with $J\to\infty$ and $\ve\to0$, we conclude that
\begin{align*}
\inf_{t\in\R}|f(\sigma+it)|=
e^{-\lambda_1\sigma}\inf_{t\in\R}\Bigl|1+\sum_{j=2}^\infty e^{(\lambda_1-\lambda_j)(\sigma+it)}\Bigr|=0.
\end{align*}
This contradicts \eqref{BASICLEM1PF1}, and hence the lemma is proved.
\end{proof}

Recall the definition of the set $\widehat L$ from just below equation \eqref{primitiverelation2}.
We now prove:

\begin{lem}\label{BASICLEM2}
For each $n\geq2$ and %
almost all $L\in X_n$ the following holds:
The vector lengths $|\vecv|$ for $\vecv\in\widehat L$ are all distinct, 
and the pairwise differences of their logarithms do
not satisfy any non-trivial linear relation over $\Q$.
\end{lem}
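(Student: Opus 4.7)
The plan is to exhibit the complement of the good set as a countable union of $\mu_n$-null sets. Identifying $X_n$ with $\SL_n(\Z)\backslash \SL_n(\R)$ and writing $L=g\Z^n$, the primitive vectors of $L$ correspond bijectively (up to sign) to those of $\Z^n$ via $g$. By Remark \ref{INDEPENDENCEEQUREM}, a lattice $L$ fails the conclusion precisely when, for some $k\ge 2$, some tuple of pairwise distinct primitive pairs $(\vecv_1^0,\ldots,\vecv_k^0)$ in $\Z^n$ (i.e.\ $\vecv_i^0\neq\pm\vecv_j^0$ for $i\neq j$), and some $(c_i)\in\Z^k\setminus\{\vec0\}$ with $\sum_i c_i=0$, one has $\sum_i c_i\log|g\vecv_i^0|=0$. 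There are only countably many such data, and since $\mu_n$ lifts (up to normalization) to Haar measure on $\SL_n(\R)$, it suffices to show that for each admissible choice of $(\vecv_i^0)$ and $(c_i)$ the exceptional set
\[
\bigl\{g\in\SL_n(\R):F(g)=0\bigr\},\qquad F(g):=\sum_{i=1}^k c_i\log|g\vecv_i^0|,
\]
has Haar measure zero.

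Since each $\vecv_i^0\neq\vec0$, the function $|g\vecv_i^0|^2=\vecv_i^{0T}g^Tg\vecv_i^0$ is a strictly positive real-analytic function of $g$, so $F$ is real-analytic on the connected real-analytic manifold $\SL_n(\R)$; the zero set of a non-identically-zero real-analytic function on a connected real-analytic manifold is Haar null, so the whole proof reduces to establishing $F\not\equiv 0$.

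The main obstacle, and the technical heart of the proof, is this non-vanishing assertion. I would address it by an asymptotic analysis along an ``unbalanced dilation''. For a unit vector $\vecu\in\R^n$ set
\[
g_T:=T\,\vecu\vecu^T+T^{-1/(n-1)}(I-\vecu\vecu^T)\in\SL_n(\R),\qquad T>0.
\]
A direct computation gives $|g_T\vecv|^2=T^2(\vecu\cdot\vecv)^2+T^{-2/(n-1)}\bigl(|\vecv|^2-(\vecu\cdot\vecv)^2\bigr)$, whence as $T\to\infty$
\[
\log|g_T\vecv|=\begin{cases}\phantom{-}\log T+O(1) & \text{if } \vecu\cdot\vecv\neq 0,\\[2pt] -\tfrac{1}{n-1}\log T+O(1) & \text{if } \vecu\cdot\vecv=0.\end{cases}
\]
If $F\equiv 0$ then the coefficient of $\log T$ in $F(g_T)$ must vanish for every unit $\vecu$, namely
\[
\sum_{i:\,\vecu\cdot\vecv_i^0\neq 0}c_i\;-\;\frac{1}{n-1}\sum_{i:\,\vecu\cdot\vecv_i^0=0}c_i\;=\;0.
\]
Now fix an index $i_0$ and choose $\vecu$ generic in the hyperplane $(\vecv_{i_0}^0)^\perp$; this is possible because $n\ge 2$, and because $\vecv_i^0\notin\R\vecv_{i_0}^0$ for $i\neq i_0$ (by primitivity together with the pairwise non-$\pm$ assumption) a generic such $\vecu$ satisfies $\vecu\cdot\vecv_i^0\neq 0$ for all $i\neq i_0$. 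The two sums above then reduce to $\sum_{i\neq i_0}c_i$ and $c_{i_0}$; combined with the hypothesis $\sum_i c_i=0$, this forces $-\tfrac{n}{n-1}c_{i_0}=0$, so $c_{i_0}=0$. As $i_0$ was arbitrary all $c_i$ vanish, contradicting $(c_i)\neq\vec0$ and completing the proof.
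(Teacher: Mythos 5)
Your argument is correct, and its skeleton coincides with the paper's: reduce, via Remark \ref{INDEPENDENCEEQUREM} and countability of the data, to showing that for each fixed tuple of primitive vectors and integer coefficients summing to zero the exceptional set in $\SL(n,\R)$ is Haar-null, and then do this by showing a suitable function is not identically zero. The two key sub-steps are, however, implemented genuinely differently. For the ``null set'' step, the paper uses $\sum_j b_j=0$ to rewrite the logarithmic relation as the vanishing of the \emph{polynomial} $\prod_{b_j>0}|\vecu_jM|^{2b_j}-\prod_{b_j<0}|\vecu_jM|^{2|b_j|}$, which is homogeneous in $M$; this transfers the problem from Haar measure on $\SL(n,\R)$ to Lebesgue measure on $\mathrm{Mat}_{n,n}(\R)$, where ``nonzero polynomial has null zero set'' is elementary. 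You stay on $\SL(n,\R)$ and invoke the (true but somewhat heavier, and here merely asserted) fact that a real-analytic function $\not\equiv 0$ on a connected manifold has null zero set. For the non-vanishing step the paper's homogenization pays off: one evaluates the polynomial at a \emph{singular} matrix whose columns are orthogonal to $\vecu_1$ but to none of $\vecu_2,\dots,\vecu_N$, so one product vanishes and the other does not --- a one-line witness. Since you remain inside $\SL(n,\R)$ you cannot use a singular matrix, and your unbalanced dilation $g_T$ with $\vecu$ generic in $(\vecv_{i_0}^0)^{\perp}$ is precisely the limiting version of that witness; extracting the coefficient of $\log T$ correctly yields $c_{i_0}=0$ for every $i_0$ (your verification that $\vecv_i^0\notin\R\vecv_{i_0}^0$ via primitivity is the same observation the paper uses to see that its set $S$ is nonempty). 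Net effect: your route is self-contained on the group and handles possibly vanishing coefficients uniformly, at the cost of an asymptotic computation and an appeal to real-analytic null sets; the paper's is shorter and needs only the zero set of a polynomial.
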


\begin{proof}

We realize $X_n$ as the homogeneous space $\SL(n,\Z)\backslash \SL(n,\R)$, where $\SL(n,\Z)g$ corresponds to the lattice $\Z^ng\subset\R^n$. Note that $\mu_n$ is the unique probability measure on $X_n$ induced from a Haar measure on $\SL(n,\R)$. We will let $\mu_n$ denote also the corresponding Haar measure on $\SL(n,\R)$.
Now the statement of the lemma is equivalent to the following:
For almost every matrix $M\in\SL(n,\R)$, any finite %
sequence of primitive vectors $\vecu_1,\ldots,\vecu_N\in\Z^n$ ($N\geq2$) with $\vecu_j\neq\pm\vecu_k$ for $j\neq k$,
and any $b_1,\ldots,b_N\in\Z\setminus\{0\}$ with $\sum_{j=1}^Nb_j=0$, we have 
\begin{align}\label{nonvanish}
\sum_{j=1}^{N}b_j\log\big|\vecu_jM\big|\neq0.
\end{align}

Since there are only countably many possible $2N$-tuples $(\vecu_1,\ldots,\vecu_N,b_1,\ldots,b_N)$ it suffices to prove that for each \textit{fixed} choice of $(\vecu_1,\ldots,\vecu_N,b_1,\ldots,b_N)$ the set of $M\in\SL(n,\R)$ satisfying \eqref{nonvanish} has full measure in $\SL(n,\R)$.
We note that \eqref{nonvanish} is equivalent to $\prod_{j=1}^N\big|\vecu_jM\big|^{2b_j}\neq1$, i.e.
\begin{align}\label{nonvanish2}
\prod_{\substack{j=1\\(b_j>0)}}^N\big|\vecu_jM\big|^{2b_j}-\prod_{\substack{j=1\\(b_j<0)}}^N\big|\vecu_jM\big|^{2|b_j|}\neq0.
\end{align}
Hence, by the explicit formula for the measure $\mu_n$ on $\SL(n,\R)$ in terms of the matrix entries (cf., e.g., \cite{terras}) 
and the fact 
that  the left-hand side of \eqref{nonvanish2} is homogeneous in $M$ (since $\sum_{j=1}^Nb_j=0$), we find that it is enough to prove that for any fixed choice of $(\vecu_1,\ldots,\vecu_N,b_1,\ldots,b_N)$ as above, the relation \eqref{nonvanish2}
holds for Lebesgue almost all matrices $M\in\mathrm{Mat}_{n,n}(\R)\cong\R^{n^2}$. 

Note that each factor $|\vecu_jM|^2$ in \eqref{nonvanish2} is a real polynomial in the $n^2$ matrix entries of $M$.
Our conditions on $\vecu_1,\ldots,\vecu_N$ imply in particular that $\vecu_1$ is not proportional to any of
$\vecu_2,\ldots,\vecu_N$, and thus the set $S$ of vectors in $\R^n$ which are orthogonal to $\vecu_1$ 
but not orthogonal to any of $\vecu_2,\ldots,\vecu_N$ is non-empty.
Now, if we take any $M\in\mathrm{Mat}_{n,n}(\R)$ all of whose column vectors lie in $S$, we note that the left-hand side of
\eqref{nonvanish2} is non-zero.
This proves that the left-hand side of \eqref{nonvanish2} is a real, \textit{non-zero}
polynomial in the $n^2$ matrix entries of $M$.
Hence the condition \eqref{nonvanish2} is indeed fulfilled for Lebesgue almost all $M\in\mathrm{Mat}_{n,n}(\R)$,
and the lemma is proved.
\end{proof}

\begin{proof}[Proof of Proposition \ref{BASICPROP}]
Take any $L\in X_n$ such that the vector lengths $|\vecv|$ for $\vecv\in\widehat L$ are all distinct, 
and the pairwise differences of their logarithms do
not satisfy any non-trivial linear relation over $\Q$.
By Lemma \ref{BASICLEM2} this holds for almost every $L$.
Having fixed any such $L$, we consider the Dirichlet series $f(s)=\sum_{\vecv\in\widehat L}|\vecv|^{-2s}$.
The abscissa of (absolute) convergence for $f(s)$ is $\sigma_0=\frac n2$,
and we have $\lim_{\sigma\to{\frac n2}^+}f(\sigma)=\infty$; cf.\ \eqref{primitiverelation2}.
Hence, by Lemma \ref{BASICLEM1}, the supremum of the real parts of the zeros of $f(s)$
equals the unique real root $\sigma_f>\frac n2$ of the equation $f(\sigma)=2|\vecv_1|^{-2\sigma}$, where $\vecv_1$ is the
shortest vector in $\widehat L$; in fact $f(s)$ has infinitely many zeros in any strip
$\sigma_1<\Re s<\sigma_2$ with $\frac n2\leq\sigma_1<\sigma_2\leq\sigma_f$.
Using $|\vecv_1|=m(L)$ and \eqref{primitiverelation2}, we see that the equation for 
$\sigma_f$ may equivalently be expressed as $2m(L)^{-2\sigma}=\frac12\zeta(2\sigma)^{-1}E_n(L,\sigma)$.
Finally, using \eqref{primitiverelation2} and the fact that $\zeta(s)$ does not have any zeros when $\Re s>1$,
we see that $E_n(L,s)$ has exactly the same zeros (also counting multiplicity) as $f(s)$ in the half-plane
$\Re s>\frac n2$.
This completes the proof of Proposition \ref{BASICPROP}.
\end{proof}

\begin{remark}\label{TILDESIGMALSMOOTHREMARK}
Consider the function
\begin{align*}
L\mapsto\tilde\sigma_L:=\Bigl[\sigma>\tfrac n2 \text{ such that }2m(L)^{-2\sigma}=\tfrac12\zeta(2\sigma)^{-1}E_n(L,\sigma)\Bigr].
\end{align*}
(Thus Proposition \ref{BASICPROP} says that $\sigma_L=\tilde\sigma_L$ almost everywhere.)
Let $X_n'$ be the (closed) subset of $X_n$ consisting of those lattices for which $\#\left\{\vecv\in L\col |\vecv|=m(L)\right\}>2$; %
by \cite[Lemma 5.1]{poisson2}, $X_n'$ has measure zero.
We claim that $\tilde\sigma_L$ is a smooth function from $X_n\setminus X_n'$ to $\R_{>n/2}$.
This follows by studying the following function on $\R_{>n/2}\times X_n$:
\begin{align*}
\alpha(\sigma,L)=2m(L)^{-2\sigma}-\tfrac12\zeta(2\sigma)^{-1}E_n(L,\sigma)
=2m(L)^{-2\sigma}-\sum_{\vecv\in\widehat L}|\vecv|^{-2\sigma}.
\end{align*}
This function is smooth on all $\R_{>n/2}\times(X_n\setminus X_n')$ and 
one easily checks that $\frac{\partial}{\partial\sigma}\alpha(\sigma,L)>0$ at all points
$\sigma=\tilde\sigma_L$, $L\in X_n\setminus X_n'$.
Hence our smoothness claim follows from the implicit function theorem.

On the other hand note that $\tilde\sigma_L\to\infty$ whenever $L\to L_0$ for some $L_0\in X_n'$.
In particular this shows, via Proposition \ref{BASICPROP}, that $\sup_{L\in X_n}\sigma_L=\infty$.
(But of course, as we remarked in the introduction, $\sigma_L$ is finite for any fixed $L\in X_n$,
in particular for any $L\in X_n'$!)
\end{remark}

\begin{remark}\label{RATIONALNONGENERICREMARK}
Note that for any \textit{rational} $L\in X_n$, there occur arbitrarily large multiplicities among the
lengths $|\vecv|$ for $\vecv\in\widehat L$; in particular, the independence condition in Lemma \ref{BASICLEM2} \textit{fails} for every rational $L\in X_n$.
Indeed, for $n\geq3$ this claim follows easily from the fact that the number of $\vecv\in\widehat L$ with
$|\vecv|\leq R$ grows like $R^n$ as $R\to\infty$,
while the number of possible values of $|\vecv|$ %
grows at most like $R^2$ %
(since $L$ rational implies that  %
there is some $c>0$ such that $|\vecv|^2\in c\Z$ for all $\vecv\in L$).
The claim also holds for $n=2$, since in this case the number of possible values of $|\vecv|$
with $|\vecv|\leq R$ is in fact $\ll R^2(\log R)^{-\frac12}$ as $R\to\infty$;
cf.\ \cite{bernays} or \cite{pall}.
\end{remark}

\section{Proof of Theorem \ref{mainresult}}
\label{MAINRESULT1PROOFSEC}

The sequence $\{T_j\}_{j=1}^{\infty}$ of points of the Poisson process $\mathcal P$ belongs to the space 
\begin{align*}
\Omega:=\Big\{\vecx=\{x_j\}_{j=1}^{\infty}\in(\R_{>0})^{\infty}\col 0<x_1<x_2<x_3<\ldots
\Big\}, 
\end{align*}
which we equip with the subspace topology induced from the product topology on $(\R_{>0})^{\infty}$. We denote the distribution of $\mathcal{P}$ on $\Omega$ by $\PP$; this is a Borel probability measure on $\Omega$.

Recall the definition \eqref{SIGMATJDEF} of $\sigma_{\{T_j\}}$;
let us prove that this is a well-defined random variable on $(\Omega,\PP)$.
We set
\begin{align*}
\Omega':=\Bigl\{\vecx=\{x_j\}_{j=1}^{\infty}\in\Omega\col \#\{x_j<X\}\sim\sfrac12X\:\text{ as }\: X\to\infty\Bigr\}.
\end{align*}
This is a Borel subset of $\Omega$ %
and, by the strong law of large numbers, we have $\PP\Omega'=1$.
For any $\vecx\in\Omega'$, we have $\sum_{j=1}^\infty x_j^{-2\sigma}<\infty$ for all $\sigma>\frac12$ and
$\sum_{j=1}^\infty x_j^{-2\sigma}\to\infty$ as $\sigma\to{\frac12}^+$,
and thus, by the same argument as in the proof of Lemma \ref{BASICLEM1},
there exists a unique $\sigma>\frac12$ satisfying $x_1^{-2\sigma}=\sum_{j=2}^\infty x_j^{-2\sigma}$.
In other words, $\sigma_\vecx$ is well-defined for every $\vecx\in\Omega'$.
Furthermore, for any $c>\frac12$, we have
$\{\vecx\in\Omega'\col\sigma_\vecx<c\}=\{\vecx\in\Omega'\col x_1^{-2c}-\sum_{j=2}^\infty x_j^{-2c}>0\}$,
which is a Borel set.
This proves that the function $\vecx\mapsto\sigma_\vecx$ is $\PP$-measurable on $\Omega$,
i.e.\ that $\sigma_{\{T_j\}}$ is indeed a well-defined random variable.

For given $n\geq2$ and $c>\frac12$, we let $F_n(L,c)$ be the random variable given by
\begin{align}\label{FnLcDEF}
F_{n}(L,c):=-\scrV_1(L)^{-2c}+\sum_{j=2}^{\infty}\scrV_{j}(L)^{-2c},
\end{align}
where as usual $L$ is taken at random in $X_n$ according to $\mu_n$.
We also let $F(c)$ be the random variable
\begin{align}\label{FcDEF}
F(c):=-T_1^{-2c}+\sum_{j=2}^{\infty}T_{j}^{-2c}.
\end{align}

\begin{lem}\label{convergence}
Let $c>\frac12$ be fixed. Then $F_n(L,c)$ converges in distribution to $F(c)$ as $n\to\infty$.
\end{lem}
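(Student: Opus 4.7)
My plan is a truncation-and-approximation argument, combining the Poisson convergence of finite tuples of volumes from \cite{poisson} with a uniform tail estimate extracted from the proof of \eqref{EPSTEINMAINRES} in \cite{epstein1}.

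For each integer $J\geq 2$, introduce the truncated random variables
$$F_n^{(J)}(L,c) := -\scrV_1(L)^{-2c} + \sum_{j=2}^J \scrV_j(L)^{-2c}, \qquad F^{(J)}(c) := -T_1^{-2c} + \sum_{j=2}^J T_j^{-2c}.$$
The result of \cite{poisson} yields the joint finite-dimensional convergence $(\scrV_1(L),\ldots,\scrV_J(L)) \xrightarrow[]{\textup{ d }} (T_1,\ldots,T_J)$ as $n\to\infty$, and the map $(x_1,\ldots,x_J)\mapsto -x_1^{-2c}+\sum_{j=2}^J x_j^{-2c}$ is continuous on the open set $\{0<x_1<\ldots<x_J\}\subset\R^J$. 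The continuous mapping theorem then gives $F_n^{(J)}(L,c) \xrightarrow[]{\textup{ d }} F^{(J)}(c)$ for every fixed $J$. On the other hand, by the strong law of large numbers applied to the i.i.d.\ exponential gaps of $\mathcal{P}$ we have $T_j/j\to 2$ almost surely, whence the series $\sum_{j=1}^\infty T_j^{-2c}$ converges almost surely for any $c>\tfrac12$, and therefore $F^{(J)}(c) \to F(c)$ almost surely as $J\to\infty$.

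The crux of the argument, and the step I expect to be the main obstacle, is to show that the (non-negative) remainder
$$R_n^{(J)}(L,c):=\sum_{j>J}\scrV_j(L)^{-2c} = \tfrac12 V_n^{-2c}E_n(L,cn) - \sum_{j=1}^J\scrV_j(L)^{-2c}$$
vanishes in probability as $J\to\infty$, uniformly in $n$:
$$\lim_{J\to\infty}\limsup_{n\to\infty}\PP\bigl(R_n^{(J)}(L,c)>\ve\bigr)=0\qquad\text{for every }\:\ve>0.$$
The delicacy here is that the Poisson convergence of the first $J$ volumes carries no direct information about the behaviour of arbitrarily many small contributions $\scrV_j^{-2c}$ for $j>J$ as $n\to\infty$. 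I would address this by revisiting the proof of \eqref{EPSTEINMAINRES} in \cite{epstein1}: that proof itself proceeds by approximating $V_n^{-2c}E_n(L,cn)$ by a finite sum over the shortest lattice vectors, and a reading of the argument provides a uniform-in-$n$ bound of the form $\EE_{\mu_n}\bigl[R_n^{(J)}(L,c)\bigr]=o_{J\to\infty}(1)$; Markov's inequality then converts this mean bound into the required probabilistic tail estimate. With these three ingredients---the finite-$J$ distributional convergence of $F_n^{(J)}$, the a.s.\ convergence $F^{(J)}\to F$, and the uniform vanishing of $R_n^{(J)}$---in hand, they combine in the standard way (the approximation theorem for weak convergence, cf.\ e.g.\ Billingsley's \emph{Convergence of Probability Measures}, Theorem 3.2) to yield the desired conclusion $F_n(L,c)\xrightarrow[]{\textup{ d }}F(c)$.
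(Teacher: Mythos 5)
Your overall architecture --- truncation at level $J$, the continuous mapping theorem applied to the finite-dimensional convergence $(\scrV_1,\ldots,\scrV_J)\to(T_1,\ldots,T_J)$ from \cite{poisson}, almost sure convergence $F^{(J)}(c)\to F(c)$, and Billingsley's approximation theorem --- is exactly the right skeleton, and it is essentially what the proof of \cite[Thm.~1]{epstein1} (which the paper simply cites for this lemma) carries out. Steps 1--3 of your argument are correct. The gap is in your tail estimate: the bound $\EE_{\mu_n}\bigl[R_n^{(J)}(L,c)\bigr]=o_{J\to\infty}(1)$ is not merely hard to extract from \cite{epstein1} --- it is false, because $\EE_{\mu_n}\bigl[\scrV_j(L)^{-2c}\bigr]=+\infty$ for every fixed $n\geq2$ and every $j$. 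Indeed, if $L$ contains a nonzero vector $\vecv$ with $V_n|\vecv|^n\leq v/j^n$, then the multiples $\vecv,2\vecv,\ldots,j\vecv$ force $\scrV_j(L)\leq v$, and by Siegel's formula the probability of this event is $\asymp_{j,n}v$ as $v\to0^+$; hence $\PP\bigl(\scrV_j^{-2c}>t\bigr)\gg_{j,n} t^{-1/(2c)}$ with $1/(2c)<1$, and the expectation diverges. So Markov's inequality applied to the unconditional mean cannot deliver the required uniform tail bound.

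The standard repair --- and the one used in \cite{epstein1} --- is to restrict to a high-probability event before taking expectations. For a threshold $R>0$ one has the inclusion $\{R_n^{(J)}>\ve\}\subseteq\{\scrV_{J+1}(L)\leq R\}\cup\bigl\{\sum_{j:\,\scrV_j>R}\scrV_j^{-2c}>\ve\bigr\}$, since on the event $\scrV_{J+1}>R$ every term of $R_n^{(J)}$ has $\scrV_j>R$. The second event is controlled by Siegel's mean value theorem applied to $\vecx\mapsto (V_n|\vecx|^n)^{-2c}\mathbf{1}\{V_n|\vecx|^n>R\}$: the substitution $v=V_n r^n$ gives $\EE_{\mu_n}\bigl[\sum_{j:\,\scrV_j>R}\scrV_j^{-2c}\bigr]=\frac12\int_R^\infty v^{-2c}\,dv=\frac{R^{1-2c}}{2(2c-1)}$, uniformly in $n$, so Markov's inequality is available here. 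The first event satisfies $\PP_{\mu_n}(\scrV_{J+1}\leq R)\to\PP(T_{J+1}\leq R)$ as $n\to\infty$, which is small once $J\gg R$. Choosing $R=R(\ve)$ large and then $J$ large yields $\lim_{J\to\infty}\limsup_{n\to\infty}\PP\bigl(R_n^{(J)}>\ve\bigr)=0$, and the rest of your argument then goes through as written.
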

\begin{proof}
The proof is a straightforward adaptation of the proof of \cite[Thm.\ 1]{epstein1} with $m=1$. 
\end{proof}

\begin{lem}\label{BOUNDARYOKLEM}
For any given $c>\frac12$ and $\tau\in\R$, we have $\PP\bigl\{F(c)=\tau\bigr\}=0$.
\end{lem}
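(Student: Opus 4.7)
My plan is to exploit the independence structure of the Poisson process to write $F(c)$ as the value at $T_1$ of a random analytic function that, almost surely, is non-constant. Since $T_1$ has a continuous distribution and is independent of the remaining data, this analytic function will hit any prescribed value $\tau$ only on a countable (random) set, which $T_1$ almost surely avoids.

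More precisely, set $S_j:=T_j-T_1$ for $j\geq2$. By the interarrival description of the Poisson process, $T_1$ is $\mathrm{Exp}(\tfrac12)$-distributed and independent of the sequence $(S_j)_{j\geq2}$, while by the strong law of large numbers $S_j\sim 2(j-1)$ almost surely. Defining
\begin{align*}
g(t):=-t^{-2c}+\sum_{j=2}^{\infty}(t+S_j)^{-2c},
\end{align*}
we have $F(c)=g(T_1)$. The first step is to show that, on a full-measure event in $(S_j)$, $g$ extends to a holomorphic function on a complex neighborhood of $(0,\infty)$: on any disk $|t-t_0|<t_0/2$ with $t_0>0$ one has $|t+S_j|\geq S_j+t_0/2$, and since $2c>1$ the estimate $\sum_{j\geq2}(S_j+t_0/2)^{-2c}<\infty$ holds almost surely; the series for $g$ therefore converges uniformly on that disk and Weierstrass' theorem yields holomorphy. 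Thus $g$ is real-analytic on $(0,\infty)$ almost surely.

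Next I would verify that $g$ is not identically constant: as $t\to 0^+$ the first term forces $g(t)\to-\infty$, since the remaining sum converges to the finite value $\sum_{j\geq2}S_j^{-2c}$ (finite by the same tail bound). Being a non-constant real-analytic function on the connected interval $(0,\infty)$, $g$ has, by the identity theorem, only a discrete (in particular countable) level set $\{t>0:g(t)=\tau\}$.

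Since $T_1$ is independent of $(S_j)_{j\geq2}$ and has a continuous distribution on $(0,\infty)$, the conditional probability $\PP\{g(T_1)=\tau\mid(S_j)_{j\geq2}\}$ vanishes on the almost-sure event just constructed, and the tower property gives $\PP\{F(c)=\tau\}=\EE\bigl[\PP\{g(T_1)=\tau\mid(S_j)_{j\geq2}\}\bigr]=0$. The only substantive step is the analyticity-plus-non-constancy of $g$; everything else is routine.
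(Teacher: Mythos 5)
Your proof is correct, but it takes a genuinely different route from the paper's. The paper deduces the lemma from the second paragraph of the proof of Theorem \ref{EXPLFORMULATHM}: conditioning on $T_1=\delta$, it shows via Proposition \ref{INFDIVEXPLPROP} that the characteristic function $\varphi_{a,\delta}$ of $\sum_{j\geq2}T_j^{-2c}$ decays like $\exp(-\kappa t^{1/a})$, hence is integrable, so the conditional law is absolutely continuous and in particular atomless. You instead condition on the gaps $S_j=T_j-T_1$, which by the i.i.d.\ exponential interarrival structure are independent of $T_1$, and then use that the random function $g(t)=-t^{-2c}+\sum_{j\geq2}(t+S_j)^{-2c}$ is almost surely real-analytic and non-constant on $(0,\infty)$ (the blow-up of $-t^{-2c}$ as $t\to0^+$ against the a.s.\ finite limit of the sum gives non-constancy), so its $\tau$-level set is discrete and is a.s.\ missed by the continuously distributed, independent $T_1$. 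All the steps check out: the domination $|t+S_j|\geq S_j+t_0/2$ on the disk, the a.s.\ convergence of $\sum_j(S_j+t_0/2)^{-2c}$ from $S_j\sim 2(j-1)$, and the Fubini/tower argument at the end are all sound. Your argument is softer and more elementary -- no Fourier inversion or incomplete gamma estimates -- and it would apply verbatim to other decreasing weights in place of $x^{-2c}$; the paper's approach costs more but yields absolute continuity of the conditional law with an explicit density, which is needed anyway for the explicit distribution function in Theorem \ref{EXPLFORMULATHM}, so the lemma comes for free there.
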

\begin{proof}
The lemma follows immediately from the calculations in the first paragraph of the proof of Theorem \ref{EXPLFORMULATHM}
(cf.\ p.\ \pageref{EXPLFORMULATHMPF} below).
\end{proof}

\begin{proof}[Proof of Theorem \ref{mainresult}]
It suffices to prove that for any fixed $c>\frac12$,
$Prob_{\mu_n}(n^{-1}\sigma_L>c)$ tends to $\PP(\sigma_{\{T_j\}}>c)$ as $n\to\infty$.
By Proposition \ref{BASICPROP} %
and the monotonicity argument 
at the beginning of the proof of Lemma \ref{BASICLEM1},
we have
\begin{align*}
Prob_{\mu_n}(n^{-1}\sigma_L>c)
&=Prob_{\mu_n}\Bigl\{L\in X_n\col -2\scrV_1(L)^{-2c}+\zeta(2cn)^{-1}\sum_{j=1}^\infty\scrV_j(L)^{-2c}>0\Bigr\}
\\
&=Prob_{\mu_n}\Bigl\{L\in X_n\col F_n(L,c)>\bigl(1-\zeta(2cn)^{-1}\bigr)\sum_{j=1}^\infty\scrV_j(L)^{-2c}\Bigr\}.
\end{align*}

Now let $\ve>0$ be given. By Lemma \ref{BOUNDARYOKLEM} we have $\PP\bigl\{F(c)=0\bigr\}=0$,
and hence (using \cite[Thm.\ 1.19(e)]{rudin}) there exists $\tau>0$ such that 
\begin{align}\label{mainresultpf1}
\PP\bigl\{F(c)\in[0,\tau]\bigr\}<\ve.
\end{align}
Furthermore, it follows from \cite{epstein1} that there exists $K>0$ and $N\in\Z^+$ such that%
\begin{align*}
Prob_{\mu_n}\Big\{L\in X_n\col\sum_{j=1}^{\infty}\scrV_j(L)^{-2c}< K\Big\}>1-\ve,
\qquad\forall n\geq N.
\end{align*}
After possibly increasing $N$, we may also assume that $(1-\zeta(2cn)^{-1})K<\tau$ for all $n\geq N$.
It follows that, for all $n\geq N$,
\begin{align*}
Prob_{\mu_n}\bigl(F_n(L,c)>\tau\bigr)-\ve
\leq Prob_{\mu_n}\bigl(n^{-1}\sigma_L>c\bigr)\leq Prob_{\mu_n}\bigl(F_n(L,c)>0\bigr).
\end{align*}
However, by Lemma \ref{convergence} and Lemma \ref{BOUNDARYOKLEM}, we have
(cf.\ \cite[Thm.\ 2.1(v)]{billconv})
\begin{align*}
\lim_{n\to\infty}Prob_{\mu_n}\bigl(F_n(L,c)>0\bigr)=\PP\bigl(F(c)>0\bigr)=\PP\bigl(\sigma_{\{T_j\}}>c\bigr)
\end{align*}
and
\begin{align*}
\lim_{n\to\infty}Prob_{\mu_n}\bigl(F_n(L,c)>\tau\bigr)=\PP\bigl(F(c)>\tau\bigr).
\end{align*}
Furthermore, by \eqref{mainresultpf1} we have $\PP\bigl(F(c)>\tau\bigr)>\PP\bigl(F(c)>0\bigr)-\ve
=\PP\bigl(\sigma_{\{T_j\}}>c\bigr)-\ve$.
Hence we obtain
\begin{align*}
\limsup_{n\to\infty}Prob_{\mu_n}\bigl(n^{-1}\sigma_L>c\bigr)\leq\PP\bigl(\sigma_{\{T_j\}}>c\bigr)
\end{align*}
and
\begin{align*}
\liminf_{n\to\infty}Prob_{\mu_n}\bigl(n^{-1}\sigma_L>c\bigr)\geq\PP\bigl(\sigma_{\{T_j\}}>c\bigr)-2\ve.
\end{align*}
But $\ve$ is arbitrary and hence the proof of Theorem \ref{mainresult} is complete.
\end{proof}

\section{Proof of Theorem \ref{ZERODENSITYLIMITTHM}}
\label{ZERODENSITYLIMITTHMSEC}

\begin{lem}\label{JESSEN1933MAINRESlem}
Let $f(s)=\sum_{j=1}^\infty e^{-\lambda_j s}$ be a Dirichlet series with real exponents $\lambda_1<\lambda_2<\cdots$
and abscissa of absolute convergence $\sigma_0<\infty$,
and set, for $\sigma>\sigma_0$,
\begin{align}\label{JESSEN1933MAINRESexpl}
\nu(\{\lambda_j\};\sigma):=
\frac1{2\pi}\sum_{a=1}^\infty\sum_{b=1}^\infty
(-1)^{\one(a\neq b)}
\lambda_{a}\lambda_{b}e^{-(\lambda_{a}+\lambda_{b})\sigma}
\int_0^\infty
\biggl(\prod_{j=1}^\infty J_{[j;a,b]}(e^{-\lambda_j\sigma}r)\biggr)\,r\,dr,
\end{align}
where $\one(\cdot)$ is the indicator function;
$[j;a,b]:=1$ if $j\in\{a,b\}$ and $a\neq b$, otherwise $[j;a,b]:=0$;
and $J_\alpha(x)$ is the Bessel function of %
order $\alpha\in\{0,1\}$.
Let $N(\sigma_1,\sigma_2;\tau_1,\tau_2)$ be the number of zeros of $f(s)$ in the rectangle
$s\in(\sigma_1,\sigma_2)\times(\tau_1,\tau_2)$, counting multiplicity.
If $\lambda_1,\lambda_2,\ldots$ are linearly independent over $\Q$,
then, for any fixed $\sigma_1<\sigma_2$ in $\R_{>\sigma_0}$,
\begin{align}\label{JESSEN1933MAINRESres}
\lim_{\tau_2-\tau_1\to\infty}\frac{N(\sigma_1,\sigma_2;\tau_1,\tau_2)}{\tau_2-\tau_1}
=\int_{\sigma_1}^{\sigma_2}\nu(\{\lambda_j\};\sigma)\,d\sigma.
\end{align}
\end{lem}
\begin{proof}
This follows from Jessen \cite{Jessen1934};
cf.\ in particular %
\cite[Sec.\ 28]{Jessen1934}
and the explicit formula for 
$G(\sigma,z)$ in \cite[Sec.\ 24]{Jessen1934}
(applied with $z=0$; we evaluate the Fourier integral in
\cite[p.\ 310 (line -10)]{Jessen1934} using the explicit formula for $\Psi$ found in the same section). %
The expression in \eqref{JESSEN1933MAINRESexpl} is nicely convergent for any $\sigma>\sigma_0$
and defines a continuous function of $\sigma$
(cf.\ \cite[Sec.\ 24]{Jessen1934} or \cite{wintner};
more details on convergence also appear in the proof of Lemma \ref{ZDLTHMkeylem} below).
\end{proof}

\begin{lem}\label{JESSENFORMULAINVlem}
Let $\nu(\{\lambda_j\};\sigma)$ be as in Lemma \ref{JESSEN1933MAINRESlem}.
Then $\nu(\{\lambda_j+\alpha\};\sigma)=\nu(\{\lambda_j\};\sigma)$ for any constant $\alpha\in\R$.
\end{lem}
\begin{proof}
If $\lambda_1,\lambda_2,\ldots$ are linearly independent over $\Q$,
and the same holds for $\lambda_1+\alpha,\lambda_2+\alpha,\ldots$,
then the claim follows from \eqref{JESSEN1933MAINRESres}, since $f(s)$ and $e^{-\alpha s}f(s)$
have the same zeros.
The general case follows by %
continuity (cf.\ \cite[end of Sec.\ 24]{Jessen1934}). %
\end{proof}
\begin{remark}
Lemma \ref{JESSENFORMULAINVlem} can also be proved directly from 
\eqref{JESSEN1933MAINRESexpl} by using the identity
\begin{align}\label{GZEXPLformaleq1PF2}
c_a\int_0^\infty\Bigl(\prod_{j=1}^\infty J_0(c_jr)\Bigr)\,r\,dr
=\sum_{b\neq a} c_b\int_0^\infty
J_1(c_ar)J_1(c_b r)\Bigl(\prod_{j\notin\{a,b\}}J_0(c_jr)\Bigr)\,r\,dr,
\end{align}
which holds for any $c_1,c_2,\ldots>0$ with $\sum_j c_j<\infty$, and any $a\in\Z^+$.
One proves \eqref{GZEXPLformaleq1PF2} %
by integration by parts,
using $c_arJ_0(c_ar)=\frac d{dr}(rJ_1(c_ar))$ and
$\frac d{dr} J_0(c_br)=-c_bJ_1(c_br)$.
\end{remark}

Now let $L\in X_n$ be any lattice which is generic in the sense of  %
Lemma \ref{BASICLEM2};
order the vectors of $\widehat L$ by increasing lengths as
$\wv_1,\wv_2,\ldots$, and set $\lambda_j=2\log|\wv_j|$,
so that $f(s)=\sum_{\vecv\in \widehat L}|\vecv|^{-2s}$ in Lemma \ref{JESSEN1933MAINRESlem}.
The condition of Lemma \ref{BASICLEM2} implies that for Lebesgue almost every $\alpha\in\R$,
the numbers $\lambda_1+\alpha,\lambda_2+\alpha,\ldots$ are linearly independent over $\Q$.
Hence, by \eqref{primitiverelation2} and Lemmas \ref{JESSEN1933MAINRESlem} and \ref{JESSENFORMULAINVlem},
relations \eqref{HLDEF} and \eqref{HLREL} hold for all $\sigma_1<\sigma_2$ in $(\frac n2,\infty)$,
with $\nu_L(\sigma)=\nu(\{\lambda_j\};\sigma)$.

\begin{remark}\label{EXACTSUPPrem}
For such a lattice $L\in X_n$,
it follows from  \eqref{HLDEF} and \eqref{HLREL} that $\nu_L(\sigma)=0$ for all $\sigma\geq\sigma_L$.
Furthermore, by Lemma \ref{BASICLEM1} and Jessen \cite[Satz B]{Jessen1933}, 
$\nu_L(\sigma)$ does not vanish identically on any subinterval of $(\frac n2,\sigma_L)$.
Hence the limit in \eqref{HLDEF} is positive whenever $\sigma_1<\sigma_L$.
This also implies that the support of $\nu_L$ in $(\frac n2,\infty)$ is exactly equal to $(\frac n2,\sigma_L]$.
\end{remark}

Next we set
\begin{align*}
\eta_j=\eta_j(L):=2\log V_n+n\lambda_j=2(\log V_n+n\log|\wv_j|).
\end{align*}
Then, again by Lemma \ref{JESSENFORMULAINVlem}, we have
\begin{align}\label{ZDLTHMpf2}
\nu(\{\eta_j\};\sigma)=n^2\nu_L(n\sigma).
\end{align}

\begin{lem}\label{POISSONCONVlem}
Fix any $K\in\Z^+$,
and take $L$ at random in $X_n$ according to $\mu_n$.
Then the random variable
$(e^{\eta_1/2},\ldots,e^{\eta_K/2})$ converges in distribution to $(T_1,\ldots,T_K)$ as $n\to\infty$.
\end{lem}
\begin{proof}
This is a simple variant of \cite[Thm.\ 1]{poisson}.
Indeed, with notation as in \eqref{volumes}, \cite[Thm.\ 1]{poisson} implies that
$\scrV_K(L)<2^n\scrV_1(L)$ holds with probability tending to $1$ as $n\to\infty$.
Hence, for any such $L$ 
the lattice vectors $\pm\vecv_1,\ldots,\pm\vecv_K$ are all primitive,
so that %
$e^{\eta_j/2}=\scrV_j(L)$ for $j=1,\ldots,K$.
\end{proof}

For any $K\geq5$, we set
\begin{align}\notag%
\nu^{(K)}(\{\lambda_j\};\sigma):=
\frac1{2\pi}\sum_{a=1}^K\sum_{b=1}^K
(-1)^{\one(a\neq b)}
\lambda_{a}\lambda_{b}e^{-(\lambda_{a}+\lambda_{b})\sigma}
\int_0^\infty
\biggl(\prod_{j=1}^K J_{[j;a,b]}(e^{-\lambda_j\sigma}r)\biggr)\,r\,dr.
\end{align}
Given any interval $I\subset\R$, we let $C(I)$ be the space of real-valued continuous functions on $I$,
provided with the supremum norm $\|\cdot\|_{C(I)}$.
\begin{lem}\label{ZDLTHMkeylem}
Let $\eta_j=\eta_j(L)=2(\log V_n+n\log|\wv_j|)$ as above.
Let $I$ be a compact subinterval of $(\frac12,\infty)$, and let $\ve>0$.
Then there exist integers $n_0\geq2$ and $K_0\geq5$ such that,
for all $K\geq K_0$ and $n\geq n_0$,
\begin{align*}
\mu_n\bigl(\bigl\{L\in X_n\col \bigl\|\nu(\{\eta_j\};\,\cdot\,)-\nu^{(K)}(\{\eta_j\};\,\cdot\,)\bigr\|_{C(I)}\leq\ve\bigr\}\bigr)
\geq1-\ve.
\end{align*}
\end{lem}
\begin{proof}
Fix some $c$ with $\frac12<c<\inf I$.
By Lemma \ref{POISSONCONVlem} and \cite[Thm.\ 1]{epstein1}, if we take $n_0$ and $A$ sufficiently large, then
for all $n\geq n_0$ we have 
\begin{align}\label{ZDLTHMkeylemPF2}
\mu_n\Bigl(\Bigl\{L\in X_n\col -A\leq\eta_1(L)<\eta_5(L)\leq A\:\text{ and }\:
\sum_{j=1}^\infty e^{-c\eta_j}<A\Bigr\}\Bigr)>1-\tfrac13\ve.
\end{align}
Note that $|\eta|\ll e^{(\sigma-c)\eta}$ uniformly over all $\eta\geq -A$ and $\sigma\in I$.
We set $\oJ(x):=\max(|J_0(x)|,|J_1(x)|)$; then $\oJ(x)\ll x^{-1/2}$ as $x\to\infty$.
Using these facts,
we conclude that there is some $B>0$ such that,
for any $n\geq n_0$ and any $L$ in the set in \eqref{ZDLTHMkeylemPF2},
\begin{align}\label{ZDLTHMkeylemPF1}
\sum_{j=1}^\infty |\eta_j| e^{-\eta_j\sigma}<B
\quad\text{and}\quad
\int_0^\infty \biggl|\prod_{j=1}^5 \oJ(e^{-\eta_j\sigma}r)\biggr|\,r\,dr<B,
\qquad\forall \sigma\in I.
\end{align}
For any $L$ satisfying \eqref{ZDLTHMkeylemPF1}, and $\sigma\in I$, we have,
since $\oJ(x)\leq1$ for all $x\geq0$,  %
\begin{align*}
\bigl|\nu(\{\eta_j\};\sigma)-\nu^{(K)}(\{\eta_j\};\sigma)\bigr|
\leq\frac{B^2}{\pi}\sum_{j>K}|\eta_j|e^{-\eta_j\sigma}
\hspace{160pt}
\\
+\frac{B^2}{2\pi}\int_0^\infty\biggl|1-\prod_{j>K}J_0(e^{-\eta_j\sigma}r)\biggr|\,
\biggl(\prod_{j=1}^5\oJ(e^{-\eta_j\sigma}r)\biggr)\,r\,dr.
\end{align*}
Hence it now suffices to prove that for any given $\ve'>0$ and $R>0$,
if we take $K$ and $n_0$ sufficiently large, then for all $n\geq n_0$ we have both
\begin{align}\label{ZDLTHMkeylemPF3}
\mu_n\biggl(\biggl\{L\in X_n\col\sup_{\sigma\in I}\sum_{j>K}|\eta_j|e^{-\eta_j\sigma}<\ve'\biggr\}\biggr)>1-\tfrac13\ve
\end{align}
and 
\begin{align}\label{ZDLTHMkeylemPF4}
\mu_n\biggl(\biggl\{L\in X_n\col\sup_{\sigma\in I}\sup_{r\in[0,R]}
\biggl|1-\prod_{j>K}J_0(e^{-\eta_j\sigma}r)\biggr|<\ve'\biggr\}\biggr)>1-\tfrac13\ve. %
\end{align}
Here \eqref{ZDLTHMkeylemPF3} is a consequence of (e.g.) \cite[Thm.\ 5]{epstein1}
(applied with $k=2$, $c$ as above, and $\delta$ sufficiently large).
To prove \eqref{ZDLTHMkeylemPF4},
note that $1\geq J_0(x)=1+O(x^2)$ as $x\to0$; hence there is a constant $\alpha>0$ such that
$e^{-x}\leq J_0(x)\leq1$ for all $x\in[0,\alpha]$.
It follows that $L$ belongs to the set in the left-hand side of \eqref{ZDLTHMkeylemPF4} whenever 
\begin{align*}
\sup_{\sigma\in I}\sum_{j>K}e^{-\eta_j\sigma}<R^{-1}\min(\alpha,|\log(1-\ve')|).
\end{align*}
Using this observation, \eqref{ZDLTHMkeylemPF4} follows by another application of \cite[Thm.\ 5]{epstein1}. This completes the proof of the lemma.
\end{proof}

\begin{proof}[Proof of Theorem \ref{ZERODENSITYLIMITTHM}]
Consider the random function $\nu_{\{T_j\}}$ (cf.\ \eqref{NUTJdef}).
By Lemma \ref{JESSEN1933MAINRESlem},
$\nu_{\{T_j\}}(\sigma)=\nu(\{2\log T_j\};\sigma)$ (almost surely),
and %
one easily verifies that 
\begin{align}\label{ZDLTHMpf1}
\nu_{\{T_j\}}=\lim_{K\to\infty}\nu^{(K)}(\{2\log T_j\};\,\cdot\,)
\quad\text{in }\: C(\tfrac12,\infty) \:\text{ (almost surely);}
\end{align}
cf.\ \cite[Sec.\ 24]{Jessen1934} or \cite{wintner},
or the proof of Lemma \ref{ZDLTHMkeylem}.
This shows in particular that $\nu_{\{T_j\}}$
is a measurable map from $(\Omega,\PP)$ to $C(\frac12,\infty)$, viz., a random element in $C(\frac12,\infty)$.

Now, for any fixed $K\geq5$, $\:\nu^{(K)}(\{\lambda_j\},\,\cdot\,)\:$ is a continuous function 
of $(\lambda_1,\ldots,\lambda_K)\in(\R_{>0})^K$ with values in
$C(\frac12,\infty)$;
and by Lemma \ref{POISSONCONVlem},
$(\eta_1,\ldots,\eta_K)$ converges in distribution to $(2\log T_1,\ldots,2\log T_K)$ as $n\to\infty$.
Therefore $\nu^{(K)}(\{\eta_j\},\,\cdot\,)$ converges in distribution to
$\nu^{(K)}(\{2\log T_j\};\,\cdot\,)$.
Using this fact, %
\eqref{ZDLTHMpf1} and Lemma \ref{ZDLTHMkeylem},
it follows that for any fixed compact interval $I\subset(\frac12,\infty)$,
the restriction of $n^2\nu_L(n\,\cdot\,)=\nu(\{\eta_j\},\,\cdot\,)$ to $I$ converges in distribution to 
the restriction of $\nu_{\{T_j\}}$ to $I$, %
as random elements in $C(I)$;  %
cf.\ \cite[Thm.\ 4.28]{Kallenberg}.
Hence, by \cite[Prop.\ 16.6]{Kallenberg}, convergence also holds %
in $C(\frac12,\infty)$,
i.e.\ we have proved that $n^2\nu_L(n\,\cdot\,)\xrightarrow[]{\textup{ d }}\nu_{\{T_j\}}$ as $n\to\infty$,
in the sense of convergence in distribution for random elements in %
$C(\frac12,\infty)$.

To complete the proof of Theorem \ref{ZERODENSITYLIMITTHM},
it remains to upgrade the result to 
\textit{joint} convergence of $n^2\nu_L(n\,\cdot\,)$ and $n^{-1}\sigma_L$.
Given the previous arguments, this is a standard but %
somewhat technical exercise:
Recalling \eqref{FnLcDEF} and \eqref{FcDEF}, we set
\begin{align*}
F_n^{(K)}(L,c):=-\scrV_1(L)^{-2c}+\sum_{j=2}^K\scrV_{j}(L)^{-2c}
\quad\text{and}\quad
F^{(K)}(c):=-T_1^{-2c}+\sum_{j=2}^{K}T_{j}^{-2c}.
\end{align*}
The key fact, now, is that for any fixed $c>\frac12$ and $K\geq5$,
the following convergence in distribution of random elements in $C(\frac12,\infty)$ holds,
as $n\to\infty$:
\begin{align*}
\one\bigl(F_n^{(K)}(L,c)>0\bigr)\,\,\nu^{(K)}(\{\eta_j\},\,\cdot\,)
\xrightarrow[]{\textup{ d }}
\one\bigl(F^{(K)}(c)>0\bigr)\,\,\nu^{(K)}(\{2\log T_j\};\,\cdot\,).
\end{align*}
Indeed, by the proof of Lemma~\ref{POISSONCONVlem},
away from a set of $L\in X_n$ of measure tending to zero as $n\to\infty$,
$F_n^{(K)}(L,c)$ is a continuous function of $(\eta_1,\ldots,\eta_K)$,
just as $\nu^{(K)}(\{\eta_j\},\,\cdot\,)$;
also $\PP(F^{(K)}(c)=0)=0$; %
hence the claim follows from Lemma \ref{POISSONCONVlem} and the mapping theorem
\cite[Thm.\ 2.7]{billconv}.
Furthermore,
from the proofs of Theorem \ref{mainresult} and \cite[Thm.\ 1]{epstein1},
one extracts the fact that for given $c$ and $\ve>0$, 
if $K$ and $n$ are taken sufficiently large 
and $L$ is picked at random in $(X_n,\mu_n)$, then
with probability greater than $1-\ve$,
the two inequalities $n^{-1}\sigma_L>c$ and $F_n^{(K)}(L,c)>0$ are both true or both false.
Using these facts and previous arguments (in particular Lemma \ref{ZDLTHMkeylem}),
we may again apply \cite[Thm.\ 4.28]{Kallenberg} to conclude that,
for any fixed compact interval $I\subset(\frac12,\infty)$ and $c>\frac12$,
\begin{align*}
\one\bigl(n^{-1}\sigma_L>c\bigr)\,n^2\nu_L(n\,\cdot\,)_{|I}
\xrightarrow[]{\textup{ d }}
\one\bigl(\sigma_{\{T_j\}}>c\bigr)\,\nu_{\{T_j\}\,|I}
\qquad
\text{as }\: n\to\infty
\end{align*}
(convergence in distribution of random elements in $C(I)$).
Finally, Theorem \ref{ZERODENSITYLIMITTHM} follows
by general measure-theoretic arguments
(akin to \cite[Thms.\ 2.3, 2.4]{billconv}).
\end{proof}

\begin{remark}
Although $\sigma_L=\sup(\supp(\nu_L))$ for generic $L\in X_n$
(cf.\ Remark \ref{EXACTSUPPrem}),
and $\sigma_{\{T_j\}}=\sup(\supp(\nu_{\{T_j\}}))$ almost surely,
it does not seem that the joint convergence of Theorem \ref{ZERODENSITYLIMITTHM} 
follows in any automatic way from 
just knowing $n^2\nu_L(n\,\cdot\,)\xrightarrow[]{\textup{ d }}\nu_{\{T_j\}}$.
Note in particular that the 
map $\nu\mapsto\sup(\supp(\nu))$ from $C(\frac12,\infty)$ to $\R_{>1/2}\cup\{\pm\infty\}$ 
is far from continuous.
\end{remark}

\section{Proof of Theorem \ref{EXPLFORMULATHM}}
\label{EXPLFORMULATHMSEC}

In this section we prove Theorem \ref{EXPLFORMULATHM}, which
gives an explicit formula for %
the distribution function of $\sigma_{\{T_j\}}$.
Recall that $\sigma_{\{T_j\}}>c$ holds if and only if $\sum_{j=2}^\infty T_j^{-2c}>T_1^{-2c}$.
Hence our task is to determine the probability
\begin{align}\label{EXPLFORMULATHMPF1}
\PP\bigl(\sigma_{\{T_j\}}>c\bigr)=
\PP\Bigl(\sum_{j=2}^\infty T_j^{-2c}>T_1^{-2c}\Bigr).
\end{align}
Let us note that for any fixed $\mu>0$, the sequence
$\mu T_1,\mu T_2,\ldots$ give the points of a Poisson process on the positive real line with intensity
$(2\mu)^{-1}$, and we have
$\sum_{j=2}^\infty T_j^{-2c}>T_1^{-2c}$ if and only if $\sum_{j=2}^\infty(\mu T_j)^{-2c}>(\mu T_1)^{-2c}$.
Hence we may, in order to make our computations slightly cleaner,
\textit{alter our notation} so that from now on, $0<T_1<T_2<\ldots$ denote the points of a 
Poisson process on the positive real line with constant intensity \textit{one;}
the probability in \eqref{EXPLFORMULATHMPF1} remains unchanged by this alteration.
Also to make the computations slightly cleaner, we will write 
\begin{align*}
a:=2c\in\R_{>1}.
\end{align*}

As a first step, we consider the \textit{conditional} distribution of the sum $\sum_{j=2}^\infty T_j^{-a}$ 
given the value of $T_1$.
We will see that this distribution is infinitely divisible.
For basic facts about infinitely divisible distributions, cf., e.g., \cite[Chs.\ VI.3, IX, XVII]{feller}.
We formulate the result for a Poisson process having constant intensity $1$; it is of course easy to 
carry this over to the case of an arbitrary constant intensity.

\begin{prop}\label{INFDIVEXPLPROP}
Let $0<T_1<T_2<\cdots$ be the points of a Poisson process on the positive real line with constant intensity $1$.
Then, for any $a>1$ and $\delta>0$, the conditional distribution of $\sum_{j=2}^\infty T_j^{-a}$, given that $T_1=\delta$,
is an infinitely divisible distribution, the characteristic function of which is given by
\begin{align}\label{INFDIVEXPLPROPRES}
\varphi_{a,\delta}(t)
=\EE\Bigl( e^{it\sum_{j=2}^\infty T_j^{-a}} \:\big|\: T_1=\delta\Bigr)
=\exp\biggl\{-\int_\delta^{\infty} \bigl(1-e^{itx^{-a}}\bigr)\,dx\biggr\}.
\end{align}
\end{prop}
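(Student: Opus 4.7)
\medskip

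\textbf{Plan.} My approach would be to reduce the problem to a standard computation of the characteristic functional of a Poisson point process, via the conditioning property of Poisson processes.

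First, I would observe that by a standard property of Poisson processes on the real line (the strong Markov / memoryless property at $T_1$), conditional on $T_1=\delta$ the remaining points $T_2<T_3<\cdots$ are precisely the points of a Poisson process of constant intensity $1$ on the half-line $(\delta,\infty)$. Concretely: $T_1$ is $\mathrm{Exp}(1)$-distributed, and for any Borel $A\subset(\delta,\infty)^{\infty-1}$ the joint density factorizes in the usual way. So it suffices to prove that if $0<X_1<X_2<\cdots$ enumerate the points of a Poisson process of intensity $1$ on $(\delta,\infty)$, then
\begin{align*}
\EE\Bigl(e^{it\sum_{j=1}^{\infty}X_j^{-a}}\Bigr)=\exp\biggl\{-\int_\delta^{\infty}\bigl(1-e^{itx^{-a}}\bigr)\,dx\biggr\}.
\end{align*}

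Second, I would verify that both sides make sense for $a>1$. Almost surely the $j$-th point $X_j$ satisfies $X_j\sim\delta+j$ (law of large numbers), so $\sum X_j^{-a}$ converges a.s.\ since $a>1$; and on the right-hand side, $|1-e^{itx^{-a}}|\leq\min(2,|t|x^{-a})$, which is integrable on $(\delta,\infty)$ for $a>1$.

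Third, the computation of the characteristic function is the Campbell / L\'evy--Khintchine formula for a Poisson point process. The most transparent derivation is to partition $(\delta,\infty)$ into small disjoint intervals $I_k$ of length $\Delta_k$ and use that the counts $N(I_k)$ are independent $\mathrm{Poisson}(\Delta_k)$ random variables. Then
\begin{align*}
\EE\Bigl(e^{it\sum_k f(\xi_k)N(I_k)}\Bigr)=\prod_k\exp\bigl\{\Delta_k(e^{itf(\xi_k)}-1)\bigr\}
\end{align*}
for a Riemann sum approximation $\sum_k f(\xi_k)N(I_k)$ of $\sum_j X_j^{-a}$ with $f(x)=x^{-a}$; passing to the limit (justified by dominated convergence, using the integrability bound above) gives exactly $\exp\bigl\{\int_\delta^{\infty}(e^{itx^{-a}}-1)\,dx\bigr\}$, as required. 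Alternatively, one can condition on the total number of points in $(\delta,R)$, use that these points are then i.i.d.\ uniform on $(\delta,R)$, sum over the Poisson-distributed count to get
\begin{align*}
\EE\Bigl(e^{it\sum_{X_j\leq R}X_j^{-a}}\Bigr)
=\exp\Bigl\{\int_\delta^{R}\bigl(e^{itx^{-a}}-1\bigr)\,dx\Bigr\},
\end{align*}
and let $R\to\infty$.

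Finally, infinite divisibility is essentially built into the formula: for any $n\in\Z_{\geq1}$ we have $\varphi_{a,\delta}(t)=\psi_n(t)^n$ where
\begin{align*}
\psi_n(t):=\exp\biggl\{-\frac{1}{n}\int_\delta^{\infty}\bigl(1-e^{itx^{-a}}\bigr)\,dx\biggr\}
\end{align*}
is itself the characteristic function of the same type of Poisson integral, now for a Poisson process on $(\delta,\infty)$ of intensity $1/n$ (equivalently, by the same computation applied to this diluted process). Thus $\varphi_{a,\delta}$ is the $n$-th power of a characteristic function for every $n$, which is the definition of infinite divisibility.

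The only slightly delicate point I foresee is the rigorous justification of the Riemann-sum / conditional-uniform passage to the limit, which is where integrability of $\min(2,|t|x^{-a})$ on $(\delta,\infty)$ (and hence the hypothesis $a>1$) is used; everything else is a direct application of standard Poisson process theory.
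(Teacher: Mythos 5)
Your proof is correct, but it takes a different route from the paper's. You invoke the restriction/memoryless property of the Poisson process (conditional on $T_1=\delta$, the remaining points form an intensity-one Poisson process on $(\delta,\infty)$) and then apply Campbell's formula for the characteristic functional of a Poisson integral, checking the integrability of $\min(2,|t|x^{-a})$ which is exactly where $a>1$ enters; infinite divisibility then falls out of the explicit formula by exhibiting the $n$-th root $\psi_n$ as the characteristic function of the same functional for a thinned (intensity $\frac1n$) process. The paper instead stays closer to first principles: it conditions on \emph{two} points, $T_1=\delta$ and $T_{n+2}=\eta$, uses the uniform order-statistics property to write the partial sum as $X_n=\sum_{j=1}^n(\delta+S_{n+1}U_j)^{-a}$ with $S_{n+1}$ a sum of exponentials, sandwiches $X_n$ between multiples of $Y_n=\sum_{j=1}^n(\delta+nU_j)^{-a}$ via the law of large numbers, computes $\EE e^{itY_n}$ directly as an $n$-th power tending to the stated exponential, and gets infinite divisibility from the general theorem on limits of i.i.d.\ triangular arrays (Feller). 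Your second alternative (fixed window $(\delta,R)$, Poisson count, i.i.d.\ uniforms, then $R\to\infty$) is in fact a cleaner variant of the paper's computation, since it avoids the law-of-large-numbers comparison forced by conditioning on the random endpoint $T_{n+2}$. What your approach buys is brevity and a self-contained proof of infinite divisibility; what the paper's buys is independence from point-process machinery (no appeal to the conditional restriction property or Campbell's theorem as known results) at the cost of the extra sandwich step. The one point to state carefully in your write-up is that the conditioning on the null event $\{T_1=\delta\}$ is to be understood via the regular conditional distribution, which is standard and is also implicit in the paper's argument.
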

(Cf.\ \cite[Thm.\ 1.4.2]{SamorodnitskyTaqqu}, where the corresponding fact is proved in the special case $\delta=0$
but with more general weights in the sum; the resulting distribution is then a stable distribution.)
\begin{proof}
Let $n$ be a positive integer and let $\eta$ be any real number larger than $\delta$.
The conditional distribution of $(T_2,\ldots,T_{n+1})$, given that 
$T_1=\delta$ and $T_{n+2}=\eta$, 
is that of the order statistic of $n$
i.i.d.\ random variables uniformly distributed in the interval $(\delta,\eta)$,
and hence the conditional distribution of $\sum_{j=2}^{n+1}T_j^{-a}$, given $T_1=\delta$ and $T_{n+2}=\eta$,
is the same as the distribution of
$\sum_{j=1}^n (\delta+(\eta-\delta)U_j)^{-a}$, where from now on $U_1,U_2,\ldots$ denotes a sequence of
i.i.d.\ random variables uniformly distributed in $(0,1)$.
It follows that the conditional distribution of $\sum_{j=2}^{n+1}T_j^{-a}$, given only $T_1=\delta$,
is the same as the distribution of 
\begin{align*}
X_n:=\sum_{j=1}^n (\delta+S_{n+1}U_j)^{-a},
\end{align*}
where $S_{n+1}$ denotes the sum of $n+1$ i.i.d.\ exponential random variables with mean one, independent from 
the sequence $\{U_j\}$
(so that $S_{n+1}$ has the same distribution as $T_{n+2}-\delta$ given $T_1=\delta$).

By the law of large numbers $n^{-1}S_{n+1}$ tends in distribution to $1$,
i.e.\ given any $\ve>0$ there is $N\in\Z_{>0}$ such that for each $n\geq N$,
we have $(1-\ve)n<S_{n+1}<(1+\ve)n$ with probability $>1-\ve$.
It follows that if we let
\begin{align*}
Y_n:=\sum_{j=1}^n (\delta+nU_j)^{-a},
\end{align*}
then, for each $n\geq N$, we have $(1+\ve)^{-a}Y_n <X_n<(1-\ve)^{-a}Y_n$ with probability $>1-\ve$.
In particular, it now suffices to prove that $Y_n$ tends in distribution to a 
(non-defective) random variable whose characteristic function is given by the right-hand side of \eqref{INFDIVEXPLPROPRES},
since then also $X_n$ must converge in distribution to this random variable,
and also it follows from the definition of $Y_n$ that the limit distribution must be infinitely divisible,
cf., e.g., \cite[Ch IX.5 (see also Ch.\ XVII.2)]{feller}.   %

But $Y_n$ is a sum of $n$ independent random variables, and thus its characteristic function equals
\begin{align*}
\EE e^{itY_n}=\Bigl(\EE e^{it(\delta+nU_1)^{-a}}\Bigr)^n
&=\biggl(\frac1n\int_\delta^{\delta+n} e^{itx^{-a}}\,dx\biggr)^n
=\biggl(1-\frac1n\int_\delta^{\delta+n} \bigl(1-e^{itx^{-a}}\bigr)\,dx\biggr)^n.
\end{align*}
Note that $|1-e^{itx^{-a}}|\ll |t|x^{-a}$ uniformly for all $x\geq\delta$ and all $t\in\R$.
In particular, for each fixed $t\in\R$ 
the integral $\int_\delta^{\infty} \bigl(1-e^{itx^{-a}}\bigr)\,dx$ is absolutely convergent, 
and $\EE e^{itY_n}$ tends to the expression in the right-hand side of \eqref{INFDIVEXPLPROPRES} as $n\to\infty$.
The bound  $|1-e^{itx^{-a}}|\ll |t|x^{-a}$ also implies that the function $\varphi_{a,\delta}(t)$ is continuous.
Hence $Y_n$ converges in distribution to a (non-defective) random variable whose characteristic function
is given by the right-hand side of \eqref{INFDIVEXPLPROPRES}, and the proposition is proved.
\end{proof}

\begin{remark}\label{INCOMPLETEGAMMAREMARK}
Let us note that the integral in \eqref{INFDIVEXPLPROPRES} 
may be expressed in terms of the incomplete gamma function.
Indeed, substituting $x=(iu)^{-\frac1a}$ %
and then integrating by parts, we get
\begin{align}\label{INCOMPLETEGAMMAFORMULA}
\int_\delta^{\infty} \bigl(1-e^{itx^{-a}}\bigr)\,dx
&=-\int_0^{-i\delta^{-a}}\bigl(1-e^{-tu}\bigr)\Bigl(\frac d{du}\bigl((iu)^{-\frac1a}\bigr)\Bigr)\,du
\nonumber\\
&=\delta(e^{it\delta^{-a}}-1)+t\int_0^{-i\delta^{-a}}e^{-tu}(iu)^{-\frac1a}\,du
\nonumber\\
&=\delta(e^{it\delta^{-a}}-1)+(-it)^{\frac1a}\gamma\Bigl(1-\tfrac1a,-it\delta^{-a}\Bigr),
\end{align}
where for $t\neq0$ we agree that $\arg(-it)=-(\sgn t)\frac\pi2$. 
Hence
\begin{align*}
\varphi_{a,\delta}(t)
=\exp\Bigl\{-\delta(e^{it\delta^{-a}}-1)-(-it)^{\frac1a}\gamma(1-\tfrac1a,-it\delta^{-a})\Bigr\}.
\end{align*}
Furthermore, using the recursion formula \eqref{INCOMPLETEGAMMAREC} together with the formula 
$\gamma(s,z)=\Gamma(s)-\Gamma(s,z)$,
where 
\begin{align}\label{INCGAMMADEF}
\Gamma(s,z):=\int_z^{\infty} u^{s-1}e^{-u}\,du
\end{align}
is the upper incomplete gamma function,
we get the alternative  formula
\begin{align}\label{INCOMPLETEGAMMAREPR}
\varphi_{a,\delta}(t)
=\exp\Bigl\{\delta-(-it)^{\frac1a}\Gamma(1-\tfrac1a)-\tfrac1a(-it)^{\frac1a}\Gamma(-\tfrac1a,-it\delta^{-a})\Bigr\}.
\end{align}
\end{remark}

\begin{proof}[Proof of Theorem \ref{EXPLFORMULATHM}]
Note that, for all $z\in \C\setminus\{0\}$ with $\Re z\geq0$,\label{EXPLFORMULATHMPF}
\begin{align*}
\left|\Gamma(-\tfrac1a,z)\right|\leq|z|^{-\frac1a-1}e^{-\Re z}.
\end{align*}
Hence, if we denote the exponent in \eqref{INCOMPLETEGAMMAREPR} by $\psi_{a,\delta}(t)$, we have for $t>0$,
\begin{align*}
\psi_{a,\delta}(t)=\delta-(-it)^{\frac1a}\Gamma(1-\tfrac1a)+O_{a,\delta}\left(t^{-1}\right).
\end{align*}
Using also $\Re (-it)^{\frac1a}\gg_at^{\frac1a}$, we conclude that $-\Re\psi_{a,\delta}(t)\gg_{a,\delta} t^{\frac1a}$
as $t\to\infty$.
Hence, in view of the symmetry $\varphi_{a,\delta}(-t)=\overline{\varphi_{a,\delta}(t)}$,
the function $\varphi_{a,\delta}$ is integrable, and therefore the 
distribution in Proposition \ref{INFDIVEXPLPROP} has a density function, which we call $f_{a,\delta}(x)$.
Thus
\begin{align*}
f_{a,\delta}(x)=\frac1{2\pi}\int_{-\infty}^\infty \varphi_{a,\delta}(t)e^{-itx}\,dt
=\frac1{\pi}\int_0^\infty\Re\bigl(\varphi_{a,\delta}(t)e^{-itx}\bigr)\,dt.
\end{align*}
It follows that the conditional probability of $\sum_{j=2}^\infty T_j^{-a}>T_1^{-a}$, given that $T_1=\delta$, is
\begin{align*}
\PP\Bigl(\sum_{j=2}^\infty T_j^{-a}>T_1^{-a}\:\:\big|\:\: T_1=\delta\Bigr)=\int_{\delta^{-a}}^\infty f_{a,\delta}(x)\,dx.
\end{align*}
However, $T_1$, being the first point of a Poisson process on the positive real line with intensity one,
has an exponential distribution of mean one.
Hence we conclude:
\begin{align}\notag
\PP\bigl(\sigma_{\{T_j\}}>c\bigr)=
\PP\Bigl(\sum_{j=2}^\infty T_j^{-a}>T_1^{-a}\Bigr)
&=\int_0^\infty\int_{\delta^{-a}}^\infty f_{a,\delta}(x)\,dx\,e^{-\delta}\,d\delta
\\\notag
&=\int_0^\infty\int_{x^{-\frac1a}}^\infty f_{a,\delta}(x)\,e^{-\delta}\,d\delta\,dx
\\\label{EXPLFORMULATHMPF2}
&=\frac1\pi\int_0^\infty\int_{x^{-\frac1a}}^\infty 
\int_0^\infty\Re\bigl(\varphi_{a,\delta}(t)e^{-itx-\delta}\bigr)\,dt\,d\delta\,dx.
\end{align}
Note that the last expression in \eqref{EXPLFORMULATHMPF2} should be viewed as an iterated integral;
it is easy to see that 
$\int_0^\infty\int_{x^{-\frac1a}}^\infty
\int_0^\infty\bigl|\Re\bigl(\varphi_{a,\delta}(t)e^{-itx-\delta}\bigr)\bigr|\,dt\,d\delta\,dx=\infty$,
so that we are not permitted to change order of integration arbitrarily.
However, we will prove that the inner double integral is absolutely convergent.

By Proposition \ref{INFDIVEXPLPROP} we have
$e^{-\delta}\varphi_{a,\delta}(t)=\exp\bigl(-\bigl(\delta+\int_\delta^{\infty}(1-e^{itx^{-a}})\,dx\bigr)\bigr)$, and here
we have, by substituting $x=(u/t)^{-\frac1a}$ and then integrating by parts,
\begin{align*}
\delta+\int_\delta^{\infty}(1-e^{itx^{-a}})\,dx
=\delta-t^{\frac1a}\int_0^{t\delta^{-a}}(1-e^{iu})\Bigl(\frac d{du}( u^{-\frac1a})\Bigr)\,du
\\
=\delta e^{it\delta^{-a}}-it^{\frac1a}\int_0^{t\delta^{-a}}e^{iu}u^{-\frac1a}\,du
=t^{\frac1a}\Phi_a(t\delta^{-a}),
\end{align*}
where we have defined
\begin{align}\label{PHIAYDEF}
\Phi_a(y):=y^{-\frac1a}e^{iy}-i\int_0^y e^{iu}u^{-\frac1a}\,du\qquad
\text{for }\:a>1,\: y>0.
\end{align}
Thus
\begin{align}\label{EXPLFORMULATHMPF3}
\PP\bigl(\sigma_{\{T_j\}}>c\bigr)
=\frac1\pi\int_0^\infty\int_{x^{-\frac1a}}^\infty\int_0^\infty\Re\exp\Bigl\{-itx-t^{\frac1a}\Phi_a(t\delta^{-a})\Bigr\}
\,dt\,d\delta\,dx.
\end{align}

Using $e^{iu}=1+O(u)$ for $u\in[0,1]$, we find that 
$\Phi_a(y)=y^{-\frac1a}(1+O(y))$ for $0<y\leq1$.
(Here, and in any ``big-$O$'' or "$\ll$" bound below, we allow the implied constant to depend on $a$.)
In particular there exists a positive number $\kappa_1$, which may depend on $a$, such that
$\Re\Phi_a(y)\geq\frac12 y^{-\frac1a}$ for all $y\in(0,\kappa_1]$.
We also note that
\begin{align}\label{PHIAYDER}
\Phi_a'(y)=-\frac1a y^{-1-\frac1a}e^{iy}.
\end{align}
In particular $\Re\Phi_a'(y)=-\frac1ay^{-1-\frac1a}\cos y$,
and this is negative for all $y\in(0,\frac\pi2)$, so that
$\Re\Phi_a(y)>\Re\Phi_a(\frac\pi2)$ holds for all $y\in(0,\frac\pi2)$.
Furthermore, for all $y\geq\frac\pi2$ we have
$\Re\Phi_a(y)=\Re\Phi_a(\tfrac\pi2)-\frac1a\int_{\frac\pi2}^y u^{-1-\frac1a}(\cos u)\,du\geq\Re\Phi_a(\tfrac\pi2)$.
Also note from \eqref{PHIAYDEF} that $\Re\Phi_a(\frac\pi2)=\int_0^{\frac\pi2}u^{-\frac1a}(\sin u)\,du>0$.
Hence we conclude:
\begin{align*}
\Re\Phi_a(y)\geq \kappa_2:=\Re\Phi_a(\tfrac\pi2)>0,\qquad\forall y>0.
\end{align*}
Using the bounds obtained, we conclude:
\begin{align}\label{EXPLFORMULATHMPF6}
\int_0^\infty\Bigl|\exp\Bigl\{-t^{\frac1a}\Phi_a(t\delta^{-a})\Bigr\}\Bigr|\,dt
\leq\int_0^{\kappa_1\delta^a}e^{-\frac12\delta}\,dt+\int_{\kappa_1\delta^a}^\infty e^{-\kappa_2t^{\frac1a}}\,dt
\ll e^{-\kappa_3\delta}
\end{align}
for all $\delta>0$, where $\kappa_3$ is some positive number which may depend on $a$.
From this estimate we see that the inner double integral in \eqref{EXPLFORMULATHMPF3} is indeed absolutely convergent,
in fact even $\int_0^\infty\int_0^\infty\bigl|\exp(-t^{\frac1a}\Phi_a(t\delta^{-a}))\bigr|\,dt\,d\delta<\infty$.
Hence we have
\begin{align*}
\PP\bigl(\sigma_{\{T_j\}}>c\bigr)
&=\frac1\pi\lim_{X\to\infty}
\Re\int_0^X\int_0^\infty\int_{x^{-\frac1a}}^\infty\exp\Bigl\{-itx-t^{\frac1a}\Phi_a(t\delta^{-a})\Bigr\}
\,d\delta\,dt\,dx
\\
&=\frac1\pi\lim_{X\to\infty}
\Re\int_0^\infty\int_0^X e^{-itx}\int_{x^{-\frac1a}}^\infty \exp\Bigl\{-t^{\frac1a}\Phi_a(t\delta^{-a})\Bigr\}
\,d\delta\,dx\,dt
\\
&=\frac1{\pi a}\lim_{X\to\infty}
\Re\int_0^\infty t^{\frac1a}\int_0^X e^{-itx}\int_0^{tx}e^{-t^{\frac1a}\Phi_a(y)}y^{-1-\frac1a}\,dy\,dx\,dt.
\end{align*}
Here, for any $t>0$, we have, by integration by parts:
\begin{align*}
\int_0^X e^{-itx}\int_0^{tx}e^{-t^{\frac1a}\Phi_a(y)}y^{-1-\frac1a}\,dy\,dx
\hspace{200pt}
\\
=\frac{ie^{-itX}}t\int_0^{tX}e^{-t^{\frac1a}\Phi_a(y)}y^{-1-\frac1a}\,dy
-\int_0^X ie^{-itx}e^{-t^{\frac1a}\Phi_a(tx)}(tx)^{-1-\frac1a}\,dx
\hspace{20pt}
\\
=\frac it\int_0^{tX}\bigl(e^{-itX}-e^{-iy}\bigr)e^{-t^{\frac1a}\Phi_a(y)}y^{-1-\frac1a}\,dy.
\end{align*}
Hence
\begin{align*}
\PP\bigl(\sigma_{\{T_j\}}>c\bigr)
=\frac1{\pi a}\lim_{X\to\infty}
\Im\int_0^\infty t^{\frac1a-1}\int_0^{tX} \bigl(e^{-iy}-e^{-itX}\bigr)e^{-t^{\frac1a}\Phi_a(y)}
y^{-1-\frac1a}\,dy\,dt.
\end{align*}

For given $X>1$, we split the integral over $t$ into two parts, corresponding to $t<X^{-1}$ and $t>X^{-1}$.
Regarding the first part, we note that $t<X^{-1}$ and $y<tX$ implies $y<1$.
Thus
$\Phi_a(y)=y^{-\frac1a}(1+O(y))$, and since also $t<X^{-1}<1$, we have
$e^{-t^{\frac1a}\Phi_a(y)}=e^{-(t/y)^{\frac1a}}(1+O(t^{\frac1a}y^{1-\frac1a}))$.
Recall that in this case we also have $e^{-iy}=1+O(y)$. Hence
\begin{align}\label{EXPLFORMULATHMPF5}
&\int_0^{X^{-1}} t^{\frac1a-1}\int_0^{tX} \bigl(e^{-iy}-e^{-itX}\bigr)e^{-t^{\frac1a}\Phi_a(y)}
y^{-1-\frac1a}\,dy\,dt
\\\notag
&=\int_0^{X^{-1}}\int_0^{tX} \Bigl(1-e^{-itX}+O\Bigl(y+t^{\frac1a}y^{1-\frac1a}\Bigr)\Bigr)
e^{-(t/y)^{\frac1a}} t^{\frac1a-1}y^{-1-\frac1a}\,dy\,dt
\\\notag
&=a\int_0^{X^{-1}}\int_{X^{-\frac1a}}^\infty \bigl(1-e^{-itX}+O\bigl(tu^{-a}+tu^{1-a}\bigr)\bigr)
e^{-u} t^{-1}\,du\,dt
\\\notag
&=a\int_0^1\frac{1-e^{-it}}t\,dt\int_{X^{-\frac1a}}^\infty e^{-u}\,du
+O\bigl(X^{-1}\bigr)\int_{X^{-\frac1a}}^\infty (u^{-a}+u^{1-a})e^{-u}\,du
\\\notag
&=a\int_0^1\frac{1-e^{-it}}t\,dt+O\bigl(X^{-\frac1a}\bigr).
\end{align}
The remaining part is
\begin{align}\label{EXPLFORMULATHMPF4}
&\int_{X^{-1}}^\infty t^{\frac1a-1}\int_0^{tX} \bigl(e^{-iy}-e^{-itX}\bigr)e^{-t^{\frac1a}\Phi_a(y)}
y^{-1-\frac1a}\,dy\,dt,
\end{align}
and here we have absolute convergence; %
$\int_{X^{-1}}^\infty t^{\frac1a-1}\int_0^{tX} e^{-t^{\frac1a}\Re\Phi_a(y)}y^{-1-\frac1a}\,dy\,dt<\infty$,
as is seen by a similar computation as in \eqref{EXPLFORMULATHMPF6}.
(The corresponding fact does not hold in \eqref{EXPLFORMULATHMPF5}.)
We also note that we may replace the range of the inner integral in \eqref{EXPLFORMULATHMPF4}
by all of $\R_{>0}$, to the cost of an error which is
\begin{align*}
\ll\int_{X^{-1}}^\infty t^{\frac1a-1}\int_{tX}^\infty e^{-\kappa_2t^{\frac1a}}y^{-1-\frac1a}\,dy\,dt
\ll X^{-\frac1a}\int_{X^{-1}}^\infty e^{-\kappa_2t^{\frac1a}}\,\frac{dt}t
\ll X^{-\frac1a}\log(2X).
\end{align*}

Collecting the above results, 
and using the fact that both $X^{-\frac1a}$ and $X^{-\frac1a}\log(2X)$ tend to zero as $X\to\infty$, we conclude that
\begin{align}\label{EXPLFORMULATHMPF7}
\PP\bigl(\sigma_{\{T_j\}}>c\bigr)
=\frac1{\pi}\int_0^1\frac{\sin t}t\,dt
+\frac1{\pi a}\lim_{X\to\infty}
\biggl(\int_{X^{-1}}^\infty\Im g_1(t)\,dt-\int_{X^{-1}}^\infty \Im\bigl(e^{-iXt}g_0(t)\bigr)\,dt\biggr),
\end{align}
where
\begin{align*}
g_\ell(t)=t^{\frac1a-1}\int_0^\infty e^{-i\ell y-t^{\frac1a}\Phi_a(y)}y^{-1-\frac1a}\,dy
\end{align*}
for $\ell=0,1$.

Next, we split $g_\ell(t)$ as $g_\ell(t)=g_{\ell,1}(t)+g_{\ell,2}(t)$, where
\begin{align*}
g_{\ell,1}(t)=t^{\frac1a-1}\int_0^1 e^{-i\ell y-t^{\frac1a}\Phi_a(y)}y^{-1-\frac1a}\,dy
\end{align*}
and
\begin{align*}
g_{\ell,2}(t)=t^{\frac1a-1}\int_1^\infty e^{-i\ell y-t^{\frac1a}\Phi_a(y)}y^{-1-\frac1a}\,dy.
\end{align*}
Bounding $\Re\Phi_a(y)$ from below as in \eqref{EXPLFORMULATHMPF6}, we see that for all $t>0$ we have
\begin{align}\label{EXPLFORMULATHMPF8}
\bigl|g_{\ell,1}(t)\bigr|\leq
t^{\frac1a-1}\int_0^1 \bigl|e^{-t^{\frac1a}\Phi_a(y)}\bigr|y^{-1-\frac1a}\,dy
\ll t^{-1}e^{-\kappa_4t^{\frac1a}},
\end{align}
where $\kappa_4$ is (just like $\kappa_1,\kappa_2,\kappa_3$) a positive number which may depend on $a$,
and
\begin{align}\label{EXPLFORMULATHMPF9}
\bigl|g_{\ell,2}(t)\bigr|\leq
t^{\frac1a-1}\int_1^\infty \bigl|e^{-t^{\frac1a}\Phi_a(y)}\bigr|y^{-1-\frac1a}\,dy
\ll t^{\frac1a-1}e^{-\kappa_2t^{\frac1a}}.
\end{align}
Note also that for all $t,y\in(0,1]$, we have
$e^{-iy-t^{\frac1a}\Phi_a(y)}=e^{-iy-t^{\frac1a}y^{-\frac1a}(1+O(y))}
=e^{-t^{\frac1a}y^{-\frac1a}}(1+O(t^{\frac1a}y^{1-\frac1a}+y))$, and thus
\begin{align}\notag
t^{\frac1a-1}\int_0^1\bigl|\Im e^{-iy-t^{\frac1a}\Phi_a(y)}\bigr|\,y^{-1-\frac1a}\,dy
\ll t^{\frac1a-1}\int_0^1\bigl(t^{\frac1a}y^{-\frac2a}+y^{-\frac1a}\bigr)e^{-t^{\frac1a}y^{-\frac1a}}\,dy
\hspace{30pt}
\\\label{EXPLFORMULATHMPF10}
\ll \int_{t^{\frac1a}}^\infty\bigl(v^{1-a}+v^{-a}\bigr)e^{-v}\,dv \ll t^{\frac1a-1}
\end{align}
for all $0<t\leq1$.
Combining this bound with \eqref{EXPLFORMULATHMPF8} and \eqref{EXPLFORMULATHMPF9}, we see that
\begin{align}\label{EXPLFORMULATHMPF12}
\int_0^\infty t^{\frac1a-1}\int_0^\infty\bigl|\Im e^{-iy-t^{\frac1a}\Phi_a(y)}\bigr|\,y^{-1-\frac1a}\,dy\,dt<\infty.
\end{align}
Hence the contribution from $g_1(t)$ in \eqref{EXPLFORMULATHMPF7} can be treated as follows:
\begin{align}\label{EXPLFORMULATHMPF13}
&\frac1{\pi a}\lim_{X\to\infty}\int_{X^{-1}}^\infty\Im g_1(t)\,dt
=\frac1{\pi a}\int_0^\infty t^{\frac1a-1}\int_0^\infty \Im e^{-iy-t^{\frac1a}\Phi_a(y)}y^{-1-\frac1a}\,dy\,dt
\\\notag
&=\frac1{\pi a}\int_0^\infty\Im\biggl(e^{-iy}\int_0^\infty t^{\frac1a-1} e^{-t^{\frac1a}\Phi_a(y)}\,dt\biggr)
y^{-1-\frac1a}\,dy
=\frac1{\pi}\int_0^\infty\Im\biggl(\frac{e^{-iy}}{\Phi_a(y)}\biggr)y^{-1-\frac1a}\,dy.
\end{align}

Finally, we treat the contribution from $g_0(t)$ in \eqref{EXPLFORMULATHMPF7}.
Note that, by \eqref{EXPLFORMULATHMPF8} and \eqref{EXPLFORMULATHMPF9}, the restriction of $g_0(t)$ to 
$[1,\infty)$ is an $\L^1$-function.
Hence, by the Riemann-Lebesgue lemma, 
$\int_1^\infty e^{-iXt}g_0(t)\,dt$ tends to $0$ as $X\to\infty$.
Moreover, the restriction of $g_{0,2}(t)$ to $(0,1]$ is in $\L^1$ and hence also
$\int_{X^{-1}}^1 e^{-iXt}g_{0,2}(t)\,dt$ tends to $0$ as $X\to\infty$.
Hence
\begin{align}\label{EXPLFORMULATHMPF11}
-\frac1{\pi a}\lim_{X\to\infty}\int_{X^{-1}}^\infty\Im\bigl(e^{-iXt}g_0(t)\bigr)\,dt
=-\frac1{\pi a}\lim_{X\to\infty}\Im\int_{X^{-1}}^1 e^{-iXt}g_{0,1}(t)\,dt.
\end{align}
Furthermore, for $0<t\leq1$, we have
\begin{align*}
g_{0,1}(t)=t^{\frac1a-1}\int_0^1 e^{-t^{\frac1a}\Phi_a(y)}y^{-1-\frac1a}\,dy
=t^{\frac1a-1}\int_0^1 e^{-t^{\frac1a}y^{-\frac1a}}\Bigl(1+O\Bigl(t^{\frac1a}y^{1-\frac1a}\Bigr)\Bigr)
y^{-1-\frac1a}\,dy
\\
=\frac at\int_{t^{\frac1a}}^\infty e^{-v}\,dv+O\bigl(t^{\frac1a-1}\bigr)
=\frac at+O\bigl(t^{\frac1a-1}\bigr),
\end{align*}
where we bounded the contribution from the big-$O$-term in the integral by a similar computation as in \eqref{EXPLFORMULATHMPF10}.
Thus $g_{0,1}(t)-\frac at$ is an $\L^1$-function on $t\in(0,1]$, so that
$\int_{X^{-1}}^1e^{-iXt}(g_{0,1}(t)-\frac at)\,dt$ tends to $0$ as $X\to\infty$.
Hence \eqref{EXPLFORMULATHMPF11} equals
\begin{align*}
-\frac1{\pi}\lim_{X\to\infty}\Im\int_{X^{-1}}^1 \frac{e^{-iXt}}t\,dt
=\frac1\pi\lim_{X\to\infty}\int_1^X \frac{\sin t}t\,dt
=\frac1\pi\int_1^\infty \frac{\sin t}t\,dt.
\end{align*}

Collecting our results into \eqref{EXPLFORMULATHMPF7}, we obtain,
since $\int_0^\infty\frac{\sin t}t\,dt=\frac\pi2$,
\begin{align}\label{EXPLFORMULATHMPFFINAL}
\PP\bigl(\sigma_{\{T_j\}}>c\bigr)
=\frac12+\frac1{\pi}\int_0^\infty\Im\biggl(\frac{e^{-iy}}{\Phi_a(y)}\biggr)y^{-1-\frac1a}\,dy.
\end{align}
Let us note that $\Phi_a(y)$ can be expressed in terms of the incomplete gamma function, by
substituting $u=iv$ in \eqref{PHIAYDEF} and using formulas \eqref{INCOMPLETEGAMMADEF} and \eqref{INCOMPLETEGAMMAREC}:
\begin{align}\label{PHIAYFORMULA}
\Phi_a(y)=y^{-\frac1a}e^{iy}+e^{-\frac\pi{2a}i}\gamma\Bigl(1-\frac1a,-iy\Bigr)
=-\frac{e^{-\frac\pi{2a}i}}{a}\gamma\Bigl(-\frac1a,-iy\Bigr).
\end{align}
Substituting this into \eqref{EXPLFORMULATHMPFFINAL}, we obtain the formula stated in
Theorem \ref{EXPLFORMULATHM}.
Using $|\Phi_a(y)|\geq\Re\Phi_a(y)\geq\kappa_2>0$ for all $y>0$ and $\Phi_a(y)=y^{-\frac1a}(1+O(y))$ for
$0<y\leq1$, one immediately sees that the integral in \eqref{EXPLFORMULATHMPFFINAL} is absolutely convergent
(this is also clear from the proof, cf.\ in particular \eqref{EXPLFORMULATHMPF12} and \eqref{EXPLFORMULATHMPF13}).
This concludes the proof of Theorem \ref{EXPLFORMULATHM}.
\end{proof}

\begin{remark}
It is worth stressing that if we remove the imaginary part in \eqref{EXPLFORMULATHMPFFINAL},
then convergence \textit{fails}:
We have $\bigl|\int_{y_0}^1\frac{e^{-iy}}{\Phi_a(y)}y^{-1-\frac1a}\,dy\bigr|\to\infty$ as $y_0\to0^+$,
since $\Phi_a(y)=y^{-\frac1a}(1+O(y))$ for $0<y\leq1$.
\end{remark}

\section{Proof of Corollary \ref{EXPLFORMULACOR}}
\label{EXPLFORMULACORPFSEC}

In this section we prove Corollary \ref{EXPLFORMULACOR}.
To begin, note that by  formal differentiation under the integral sign in \eqref{EXPLFORMULATHMPFFINAL}, we have
$Prob\bigl(\sigma_{\{T_j\}}\leq c\bigr)=\int_{1/2}^c f(c_1)\,dc_1$, where
$f:\R_{>\frac12}\to\R_{>0}$ is given by
\begin{align}\label{DENSITY}
f(c) %
=\frac2{\pi}\int_0^\infty\Im\left(\frac{e^{-iy}}{\Phi_a(y)}\left(\frac{\frac{\partial}{\partial a}\Phi_a(y)}{\Phi_a(y)}-\frac{\log y}{a^2}\right)\right)y^{-1-\frac1a}\,dy.
\end{align}
Here $a:=2c\in\R_{>1}$ (see Section \ref{EXPLFORMULATHMSEC}).
This manipulation is justified by the fact that the integrand in \eqref{DENSITY} is majorized, uniformly for $a$ in compact subsets of $\R_{>1}$, by an integrable function; this follows from an argument similar to the one that shows that the integral in \eqref{EXPLFORMULATHMPFFINAL} is absolutely convergent, using also that $\frac{\partial}{\partial a}\Phi_a(y)=a^{-2}(\log y)y^{-\frac1a}\left(1+O(y)\right)$ for $0<y\leq\frac12$ and $\frac{\partial}{\partial a}\Phi_a(y)=O(1)$ for $\frac12\leq y<\infty$.

\begin{remark}
Note in particular that %
the imaginary part in \eqref{DENSITY} may be taken outside the integral; in fact even 
$\int_0^\infty \bigl|\frac{e^{-iy}}{\Phi_a(y)}\bigl(\frac{\frac{\partial}{\partial a}\Phi_a(y)}{\Phi_a(y)}-\frac{\log y}{a^2}\bigr)\bigr|y^{-1-\frac1a}\,dy<\infty$.
\end{remark}

Let us now consider formula \eqref{DENSITY} in the limit as $a\to\infty$.
In \eqref{PHIAYDEF}, we expand $e^{iu}$ in a power series, change order between summation and integration and then use
$(n-a^{-1})^{-1}=n^{-1}\sum_{k=0}^\infty(na)^{-k}$ for each $n\in\Z^+$.
This gives
\begin{align}\label{ALARGEPF8}
\Phi_a(y)=y^{-\frac1a}\Bigl(1-\sum_{k=1}^\infty F_k(y)a^{-k}\Bigr),
\end{align}
where
\begin{align}\label{FKYDEF}
F_k(y):=\sum_{n=1}^\infty\frac{(iy)^n}{n!n^k}.
\end{align}
Obviously $|F_k(y)|\leq e^{|y|}-1$ holds for all $y>0$ and all $k$,
and hence we see that given any $y_0>0$ there exists some $a_0=a_0(y_0)>1$ such that
$\bigl|\sum_{k=1}^\infty F_k(y)a^{-k}\bigr|\leq\frac12$ holds for all $a\geq a_0$, $y\in(0,y_0]$.
We also have $\bigl|\sum_{k=1}^\infty F_k(y)a^{-k}\bigr|\ll|y|a^{-1}$ for these $a,y$, and therefore
$\Phi_a(y)^{-1}=y^{\frac1a}(1+F_1(y)a^{-1}+O(|y|a^{-2}))$.
The power series in \eqref{ALARGEPF8} may also be differentiated termwise with respect to $a$.
Using these observations, we obtain by a short calculation:
\begin{align}\label{ALARGEPF10}
\frac{y^{-1-\frac1a}}{\Phi_a(y)}\biggl(\frac{\frac{\partial}{\partial a}\Phi_a(y)}{\Phi_a(y)}-\frac{\log y}{a^2}\biggr)
=\frac{F_1(y)}{ya^2}+\frac{2(F_1(y)^2+F_2(y))}{ya^3}+O\Bigl(\frac{1+|\log y|}{a^4}\Bigr),
\end{align}
uniformly over all $a\geq a_0(y_0)$, $y\in(0,y_0]$ (where we recall that $y_0>0$ is arbitrary).

In order to obtain a similar relation also for large $y$, we start by setting
\begin{align}\label{ALARGEPF3xi}
\xi(a):=\lim_{y\to\infty}\Phi_a(y)
=-\frac{e^{-\frac\pi{2a}i}\Gamma(-\frac1a)}a %
\end{align}
(cf.\ \eqref{PHIAYFORMULA}). In view of \eqref{PHIAYDER}, we have 
$\Phi_a(y)=\xi(a)+a^{-1}\int_y^\infty u^{-1-\frac1a}e^{iu}\,du$, and integrating by parts twice, we get 
(for any $a>1$, $y>0$)
\begin{align}\label{ALARGEPF3}
\Phi_a(y)=\xi(a)+\frac{y^{-\frac1a}}a\Gamma(0,-iy)
-\frac{y^{-\frac1a}}{a^2}\Pi(y)+\frac1{a^3}\int_y^\infty u^{-1-\frac1a}\Pi(u)\,du,
\end{align}
where
\begin{align}\label{MYPIDEF}
\Pi(y):=\int_y^\infty\frac{\Gamma(0,-iu)}u\,du.
\end{align}
We have $|\Gamma(0,-iy)|\ll y^{-1}$ for all $y>0$, and thus also $|\Pi(y)|\ll y^{-1}$ for $y\geq 1$. %
Using this fact together with the trivial observation $-1-\frac1a<-1$, we bound the integral in \eqref{ALARGEPF3} and get
\begin{align}\label{ALARGEPF1}
\Phi_a(y)=\xi(a)+\frac{y^{-\frac1a}}a\Gamma(0,-iy)
-\frac{y^{-\frac1a}}{a^2}\Pi(y)+O(a^{-3}y^{-1}),
\end{align}
uniformly over all $a>1$, $y\geq1$.
Since also $\xi(a)=1+(\gamma-\tfrac{\pi}2i)a^{-1}+O(a^{-2})$ as $a\to\infty$, we see that
$\Phi_a(y)/\xi(a)$ is near $1$ whenever $a$ and $y$ are %
large; hence there exist absolute constants $a_1>1$ and $y_0\geq1$ such that
for all $a\geq a_1$ and $y\geq y_0$,
\begin{align}\notag
\frac1{\Phi_a(y)} &=\frac1{\xi(a)}-\frac{y^{-\frac1a}}{a\xi(a)^2}\Gamma(0,-iy)+O(a^{-2}y^{-1})
\\\label{ALARGEPF2}
&=\frac1{\xi(a)}-\frac{y^{-\frac1a}}{a}\Gamma(0,-iy)+O(a^{-2}y^{-1}).
\end{align}
In order to obtain an asymptotic formula also for $\frac{\partial}{\partial a}\Phi_a(y)$, we note that the right-hand side
of \eqref{ALARGEPF3} defines an analytic function of the \emph{complex} variable $w=a^{-1}$ in the region $|w|<1$ (including $w=0$).
Restricting to $|w|\leq\frac12$, %
we may bound the absolute value of the integral in \eqref{ALARGEPF3}
using $|\Pi(u)|\ll u^{-1}$ and $\Re(-1-w)\leq-\frac12$.
We may then use the Cauchy differentiation formula to obtain an asymptotic formula for the
derivative of our analytic function, valid uniformly for $|w|\leq\frac14$.
In particular,
\begin{align}\label{ALARGEPF5}
\frac{\partial}{\partial a}\Phi_a(y)=\xi'(a)-\frac{y^{-\frac1a}}{a^2}\Gamma(0,-iy)
+\frac{y^{-\frac1a}}{a^3}\bigl(2\Pi(y)+\Gamma(0,-iy)\log y\bigr)
+O\bigl(a^{-4}y^{-\frac12}\bigr),
\end{align}
uniformly over all $a\geq4$ and all $y\geq1$.
Using \eqref{ALARGEPF2} and \eqref{ALARGEPF5}, we obtain, via a straightforward computation,
\begin{align}\label{ALARGEPF6}
\frac{y^{-1-\frac1a}}{\Phi_a(y)}\biggl(\frac{\frac{\partial}{\partial a}\Phi_a(y)}{\Phi_a(y)}-\frac{\log y}{a^2}\biggr)
=\frac{\xi'(a)}{\xi(a)^2}y^{-1-\frac1a}-\frac{(\log y)y^{-1-\frac1a}}{a^2\xi(a)}
-\frac{y^{-1-\frac2a}\Gamma(0,-iy)}{a^2}
\hspace{30pt}
\\\notag
+\frac{y^{-1-\frac2a}}{a^3}
\biggl\{\bigl(4\gamma-2\pi i+2\log y+2y^{-\frac1a}\Gamma(0,-iy)\bigr)\Gamma(0,iy)+2\Pi(y)\biggr\}
+O\bigl(a^{-4}y^{-\frac32}\bigr),
\end{align}
uniformly over $a\geq\max(a_1,4)$ and $y\geq y_0$.

We now multiply the relation \eqref{ALARGEPF6} with $e^{-iy}$, and integrate the result over $y\in[y_0,\infty)$.
The contribution from the first term is  %
\begin{align}\label{ALARGEPF7}
\frac{\xi'(a)}{\xi(a)^2}\int_{y_0}^\infty y^{-1-\frac1a}e^{-iy}\,dy.
\end{align}
We split this integral into two parts as $\int_{y_0}^{\exp(a^{1/4})}+\int_{\exp(a^{1/4})}^\infty$
(keeping $a$ so large that $\exp(a^{1/4})>y_0$); then, because of the oscillating character of the integrand,
the second integral is $O(\exp(-a^{1/4}))$.
In the first integral, we use $y^{-\frac1a}=1-\frac{\log y}a+\frac12(\frac{\log y}a)^2+O((\frac{\log y}a)^3)$
and
$\int_{y_0}^{\exp(a^{1/4})}\frac{(\log y)^3}y\,dy\ll a$; then, by a quick computation, we find that \eqref{ALARGEPF7} equals
$\frac{\xi'(a)}{\xi(a)^2}
\bigl(\int_{y_0}^\infty\frac{e^{-iy}}y\,dy-\frac1a\int_{y_0}^\infty\frac{e^{-iy}\log y}y\,dy+O(a^{-2})\bigr)$.
The remaining terms in \eqref{ALARGEPF6} can be treated similarly, and using the relations
\begin{align}\label{F1F2FORMULAS}
&F_1(y)=\tfrac\pi2i-\gamma-\log y-\Gamma(0,-iy);
\\\notag
&F_2(y)=\Pi(y)+\bigl(\tfrac1{24}\pi^2-\tfrac12\gamma^2+\tfrac12\pi i\gamma\bigr)
+\bigl(\tfrac12\pi i-\gamma-\tfrac12\log y\bigr)\log y,
\end{align}
the result may be collected as
\begin{align}\notag
&\int_{y_0}^\infty
\frac{e^{-iy}y^{-1-\frac1a}}{\Phi_a(y)}\biggl(\frac{\frac{\partial}{\partial a}\Phi_a(y)}{\Phi_a(y)}-\frac{\log y}{a^2}\biggr)
\,dy
\hspace{140pt}
\\\notag %
&=a^{-2}\int_{y_0}^\infty\frac{F_1(y)}y e^{-iy}\,dy
+a^{-3}\int_{y_0}^\infty\frac{2(F_1(y)^2+F_2(y))}y e^{-iy}\,dy+O(a^{-4}),
\end{align}
for all $a\geq\max(a_1,4)$.
Using also \eqref{DENSITY} and \eqref{ALARGEPF10}, we thus obtain an asymptotic formula for $f(c)$ as $c=\tfrac12a\to\infty$.
Note, however, that $\Im\int_0^\infty\frac{F_1(y)}ye^{-iy}\,dy=\frac12\Im\int_{-\infty}^\infty\frac{F_1(y)}ye^{-iy}\,dy=0$, 
where the second equality follows using the Cauchy integral theorem, moving the contour towards infinity in the 
lower half-plane.
Hence the coefficient in front of $a^{-2}=(2c)^{-2}$ in the asymptotic formula vanishes, and 
we arrive at \eqref{ASYMPTCLARGERES}, with
\begin{align}\label{KINT}
K_2=\frac1{2\pi}\Im\int_{0}^{\infty}\frac{F_1(y)^2+F_2(y)}y e^{-iy}\,dy=0.822467\ldots.
\end{align}
(The numerical evaluation of this integral, which is not entirely straightforward, is carried out in 
\cite[constants.mpl]{sts}.)

\vspace{5pt}

We next turn to the study of \eqref{DENSITY} in the limit as $a\to1$.
Our presentation here will be rather brief; we refer to \cite[asymptotics.mpl]{sts} for further details. 
The formula \eqref{PHIAYDEF} may be expressed as
\begin{align}\label{ASMALLPF2}
\Phi_a(y)=y^{-\frac1a}e^{iy}-ie^{iy}\frac{y^{1-\frac1a}}{1-a^{-1}}-\frac 1{1-a^{-1}}\int_0^y e^{iu}u^{1-\frac1a}\,du.
\end{align}
Now fix $N\in\Z^+$, and let us keep $(a-1)^N\leq y\leq(a-1)^{-N}$, and $a\in(1,2]$.
We split the integral in \eqref{ASMALLPF2} as $\int_0^{(a-1)^{N+1}}+\int_{(a-1)^{N+1}}^y$ and bound the first part trivially,
while for $u\in[(a-1)^{N+1},y]$, we use the fact that 
$u^{1-\frac1a}=\sum_{k=0}^{N+1}\frac{(1-a^{-1})^k(\log u)^k}{k!}$\linebreak
$+O((a-1)^{N+2}|\log u|^{N+2})$, where the error is an increasing function of $u$ when $u\geq1$.
This leads to the formula
\begin{align}\label{ASMALLPF1}
&\Phi_a(y)=\frac{-i}{1-a^{-1}}\biggl\{1+\sum_{k=1}^{N+1} G_k(y)(1-a^{-1})^k
+O\Bigl((a-1)^{N+1}\Bigl(1+\frac1y\Bigr)\Bigr)\biggr\},
\end{align}
where $G_1(y)$, $G_2(y),\ldots$ are given by
\begin{align*}
G_k(y):=\frac{ie^{iy}(\log y)^{k-1}}{(k-1)! y}+\frac{e^{iy}(\log y)^k}{k!}
-\frac i{k!}\int_0^y(\log u)^ke^{iu}\,du.
\end{align*}
Let us now further restrict to the case where $(a-1)^{\frac12}\leq y\leq (a-1)^{-N}$.
Using $|G_k(y)|\ll_k |\log y|^{k-1}y^{-1}+|\log y|^k+1$, we see that there is some $a_0=a_0(N)\in(1,2]$ such that
for all $a\in(1,a_0]$ and all $y\in[(a-1)^{\frac12},(a-1)^{-N}]$
the expression within the brackets in \eqref{ASMALLPF1} lies in $\{z\col |z-1|<\frac12\}$, and so we get
\begin{align*}
\frac1{\Phi_a(y)}
=i(1-a^{-1})\biggl\{1+\sum_{\ell=1}^N\biggl\{-\sum_{k=1}^N G_k(y)(1-a^{-1})^k\biggr\}^\ell
\hspace{100pt}
\\
+O\Bigl(\Bigl(y^{-1}+|\log y|\Bigr)^{N+1}(a-1)^{N+1}\Bigr)
\biggr\}.
\end{align*}
Working similarly, starting from a differentiated version of \eqref{ASMALLPF2}, we also get an asymptotic formula for
$\frac{\partial}{\partial a}\Phi_a(y)$,
and with further computation, we finally obtain
\begin{align*}
\frac{e^{-iy}y^{-1-\frac1a}}{\Phi_a(y)}
\biggl(\frac{\frac{\partial}{\partial a}\Phi_a(y)}{\Phi_a(y)}-\frac{\log y}{a^2}\biggr)
=-\frac{ie^{-iy}}{y^2}\biggl\{1+\sum_{\ell=1}^N H_\ell(y)(a-1)^\ell
\hspace{100pt}
\\
+O\Bigl(\Bigl(y^{-1}+|\log y|\Bigr)^{N+1}(a-1)^{N+1}\Bigr)\biggr\},
\end{align*}
for all $a\in(1,a_0]$ and $y\in[(a-1)^{\frac12},(a-1)^{-N}]$.
Here $H_1(y),H_2(y),\ldots$ are certain continuous functions of $y$ satisfying
$|H_\ell(y)|\ll_\ell (y^{-1}+|\log y|)^\ell$; in particular
we have
\begin{align}\label{HFORMULA}%
&H_1(y)=2\biggl\{i\int_0^y(\log u)e^{iu}\,du-\frac{ie^{iy}}y+(1-e^{iy})\log y-1\biggr\};
\\\notag
&H_2(y)=3\biggl\{\frac i2\int_0^y(\log u)^2e^{iu}\,du
-\biggl(\int_0^y(\log u)e^{iu}\,du+i-\frac i2\log y-\frac{e^{iy}}y+ie^{iy}\log y\biggr)^2
\\\notag
&\hspace{200pt}-\frac{ie^{iy}\log y}y-\log y+\bigl(\tfrac14-\tfrac12e^{iy}\bigr)(\log y)^2\biggr\}.
\end{align}
Writing $\widetilde H_\ell(y):=-ie^{-iy}y^{-2}H_\ell(y)$, it follows that, for $y\leq1$,
\begin{align*}
&\Im\widetilde H_1(y)=2y^{-2}+\tfrac12-\tfrac{13}{144}y^2+O(y^4);
\qquad \Im\widetilde H_2(y)=3y^{-4}+\tfrac32y^{-2}-\tfrac{17}6+O(y^2).
\end{align*}
Furthermore, one computes (again for $y\leq1$)
\begin{align*}
&\Im\widetilde H_3(y)=-4y^{-4}-\tfrac{37}3y^{-2}+O(1);
\quad&&
\Im\widetilde H_4(y)=-5y^{-6}-\tfrac{15}2y^{-4}+O(y^{-2});
\\
&\Im\widetilde H_5(y)=6y^{-6}+O(y^{-4}).
\quad&&
\Im\widetilde H_6(y)=7y^{-8}+O(y^{-6}).
\end{align*}
Using these relations (taking $N=6$), we obtain
\begin{align}\notag
\Im\int_{(a-1)^{\frac12}}^\infty %
\frac{e^{-iy}y^{-1-\frac1a}}{\Phi_a(y)}
\biggl(\frac{\frac{\partial}{\partial a}\Phi_a(y)}{\Phi_a(y)}-\frac{\log y}{a^2}\biggr)\,dy
=\int_0^\infty\frac{1-\cos y}{y^2}\,dy
\hspace{100pt}
\\\notag
+(a-1)\int_0^\infty\Bigl(\Im\widetilde H_1(y)-2y^{-2}\Bigr)\,dy
+(a-1)^2\int_0^\infty\Bigl(\Im\widetilde H_2(y)-3y^{-4}-\tfrac32y^{-2}\Bigr)\,dy
\\\label{ASMALLPF3}
+\Bigl\{-(a-1)^{-\frac12}+\tfrac52(a-1)^{\frac12}-\tfrac{95}{72}(a-1)^{\frac32}-\tfrac{52759}{5400}(a-1)^{\frac52}\Bigr\}+O\bigl((a-1)^3\bigr).
\end{align}
(This formula is first derived with each upper integration limit being $(a-1)^{-4}$ (say) in place of $\infty$;
the remaining integrals over $y\in[(a-1)^{-4},\infty)$ are easily seen to be subsumed in the error %
term.)

To treat the integral over $y\leq(a-1)^{\frac12}$, we start with the formula
\begin{align*}
\Phi_a(y)=\frac{y^{-\frac1a}(a-1-iy)}{a-1}\biggl\{1-\sum_{k=2}^N\frac{i^k(a-1)}{k!(ka-1)(a-1-iy)}y^k+
O\Bigl(y^N\min(a-1,y)\Bigr)\biggr\},
\end{align*}
which holds uniformly over all $a>1$ and $0<y\leq1$, for any fixed $N\in\Z_{\geq2}$;
this is proved using \eqref{PHIAYDEF} and the power series expansion of $e^{iu}$.
Note that the sum over $k$ is $O(y\min(a-1,y))$; hence there is an absolute constant $y_0\in(0,1]$ such that
for all $a>1$ and $0<y\leq y_0$, we have
\begin{align*}
\frac1{\Phi_a(y)}=\frac{y^{\frac1a}(a-1)}{a-1-iy}\biggl\{1+\sum_{1\leq\ell\leq N/2}
\biggl(\sum_{k=2}^N\frac{i^k(a-1)}{k!(ka-1)(a-1-iy)}y^k\biggr)^\ell+
O\Bigl(y^N\min(a-1,y)\Bigr)\biggr\}.
\end{align*}
Using this formula with $N=5$, together with a similar asymptotic formula for $\frac{\partial}{\partial a}\Phi_a(y)$ 
deduced from a differentiated version of \eqref{PHIAYDEF},
we find after some computation that
\begin{align*}
\frac{e^{-iy}y^{-1-\frac1a}}{\Phi_a(y)}
\biggl(\frac{\frac{\partial}{\partial a}\Phi_a(y)}{\Phi_a(y)}-\frac{\log y}{a^2}\biggr)
=\frac{P_0(a-1,y)+P_1(a-1,y)\log y}{a^2(2a-1)^6(3a-1)^5(4a-1)^4(5a-1)^4(a-1-iy)^6}
\\
+O\Bigl((a-1)y^3\bigl(1+(a-1)|\log y|\bigr)\Bigr),
\end{align*}
where $P_0$ and $P_1$ are explicit polynomials.
This formula can now be integrated over $y$ in terms of elementary functions, and we obtain
\begin{align}\notag
\Im\int_0^{(a-1)^{\frac12}}
\frac{e^{-iy}y^{-1-\frac1a}}{\Phi_a(y)}
\biggl(\frac{\frac{\partial}{\partial a}\Phi_a(y)}{\Phi_a(y)}-\frac{\log y}{a^2}\biggr)\,dy
=\tfrac12\pi-\tfrac52\pi(a-1)^2
\hspace{70pt}
\\\label{ASMALLPF4}
+\Bigl\{(a-1)^{-\frac12}-\tfrac52(a-1)^{\frac12}+\tfrac{95}{72}(a-1)^{\frac32}+\tfrac{52759}{5400}(a-1)^{\frac52}\Bigr\}+O\bigl((a-1)^3\bigr).
\end{align}
Finally, we add \eqref{ASMALLPF3} and \eqref{ASMALLPF4}, %
and note that since $H_1(y)=-2(F_1(y)+\frac{ie^{iy}}y+1)$, we have
\begin{align*}
\int_0^\infty\bigl(\Im\widetilde H_1(y)-2y^{-2}\bigr)\,dy
=\int_{-\infty}^\infty\frac{\Im\big(ie^{-iy}F_1(y)\big)}{y^2}\,dy-\pi=0,
\end{align*}
where the second equality follows by 
again moving the contour towards infinity in the lower half-plane, noticing the pole at $y=0$.
Hence we arrive at \eqref{ASYMPTCSMALLRES}, with
\begin{align}\label{K1DEF}
K_1=20+\frac8\pi\int_0^\infty\Bigl(3y^{-4}+\tfrac32y^{-2}-\Im\widetilde H_2(y)\Bigr)\,dy=39.47841\ldots
\end{align}
(cf.\ \cite[constants.mpl]{sts}).
This completes the proof of Corollary \ref{EXPLFORMULACOR}.
\hfill$\square$

\begin{remark}
It appears %
that by the same method %
one could obtain asymptotic expansions of %
$f(c)$, in the limits as $c\to\infty$ and $c\to\frac12$,
with the error term having any desired power rate of decay. %
\end{remark}

\appendix
\section{Residue calculus and numerical computation\\ of the density}
\label{NUMSEC}

In this appendix we  discuss the evaluation of the
integrals in \eqref{EXPLFORMULATHMPFFINAL} and \eqref{DENSITY} using the residue theorem,
resulting in alternative formulas for $\PP\bigl(\sigma_{\{T_j\}}>c\bigr)$ and the corresponding density.
These formulas turn out to be useful for numerical computation,
something which we discuss briefly towards the end of the appendix (see also \cite[numdensity.mpl]{sts}).

We now write $z$ in place of $y$.
By \eqref{PHIAYDEF} we have
$\Phi_a(z)=z^{-\frac1a}(e^{iz}-iz\int_0^1 e^{izt}t^{-\frac1a}\,dt)$,
and here the expression in the parenthesis is clearly an entire function of $z$.
Hence
\begin{align}\label{PSIADEF}
\Psi_a(z):=\frac{e^{-iz}z^{-1-\frac1a}}{\Phi_a(z)}=\frac{e^{-iz}z^{-1}}{e^{iz}-iz\int_0^1 e^{izt}t^{-\frac1a}\,dt}
\end{align}
is a meromorphic function in all of $\C$.
In \eqref{EXPLFORMULATHMPFFINAL} we are integrating $\Im\Psi_a(z)$ along the positive real line;
using the symmetry $\Psi_a(-z)=-\overline{\Psi_a(\overline z)}$, we may rewrite this as
\begin{align}\label{EXPLFORMULATHMPFFINAL3}
\PP\bigl(\sigma_{\{T_j\}}>c\bigr)
=\frac12+\lim_{r\to0^+}\frac1{2\pi i}\Bigl(\int_{-\infty}^{-r}\Psi_a(y)\,dy+\int_r^{\infty}\Psi_a(y)\,dy\Bigr).
\end{align}
Let $C'_r$ be the semicircle $\{z\col |z|=r,\:\Im z\leq0\}$, oriented in the direction from $-r$ to $r$,
and let $C_r$ be the contour going from $-\infty$ to $-r$ along $\R$, then from $-r$ to $r$ along $C'_r$
and finally from $r$ to $+\infty$ along $\R$.
Since $\Psi_a(z)$ has a simple pole at $z=0$ with residue $1$, we have
$\int_{C'_r}\Psi_a(z)\,dz=i\pi+O(r)$ as $r\to0$.
Thus \eqref{EXPLFORMULATHMPFFINAL3} equals $\lim_{r\to0^+}\frac1{2\pi i}\int_{C_r}\Psi_a(z)\,dz$.
However, by Cauchy's integral theorem, $\int_{C_r}\Psi_a(z)\,dz$ is independent of $r$ for all sufficiently small $r$.
Hence
\begin{align}\label{CONTOURCHANGE1}
\PP\bigl(\sigma_{\{T_j\}}>c\bigr)=\frac1{2\pi i}\int_{C_r}\Psi_a(z)\,dz,
\end{align}
for any $r>0$ so small that $\Psi_a(z)$ has no pole in the punctured disk $\{z\col 0<|z|\leq r\}$.

We wish to replace $C_r$ in \eqref{CONTOURCHANGE1} by a contour over $z$'s with large negative imaginary part.
In order to do so, we first need to understand the poles of $\Psi_a(z)$ in the lower half plane.
Numerics indicate that there is exactly one simple pole in the infinite vertical strip
$\{z\col (2n-1)\pi<\Re z<(2n+1)\pi,\:\Im z<0\}$ for each integer $n$; cf.\ Figure \ref{ZEROTRACKINGFIG} below.
However, for technical reasons it seems easier to prove a corresponding statement instead for certain
``curved vertical strips'', as follows.
For each $n\in\Z^+$, we let $\Gamma_n$ be the curve in the complex plane given by
\begin{align}\label{CURVEDEF}
x\mapsto c_n(x)=x-ix\tan\bigl((n-\tfrac14)\pi-\tfrac12x\bigr),
\qquad \bigl(2n-\tfrac32\bigr)\pi<x\leq\bigl(2n-\tfrac12\bigr)\pi.
\end{align}
One notes that $\Im c_n(x)\to-\infty$ as $x\to(2n-\frac32)\pi^+$, that $\Im c_n((2n-\frac12)\pi)=0$ and that $0<\arg c_n'(x)<\frac\pi2$ for all $(2n-\tfrac32)\pi<x<(2n-\tfrac12)\pi$. Hence $\Gamma_n$ and $\Gamma_{n+1}$, together with the real interval $[(2n-\tfrac12)\pi,(2n+\tfrac32)\pi]$,
bound a curved vertical strip, which we call $S_n$ (we take $S_n$ to be closed).
We also let $S_{-n}=\{-\overline z\col z\in S_n\}$ be the reflection of $S_n$ in the imaginary axis,
and we let $S_0$ be the curved vertical strip bounded by the curves $\Gamma_1$,
$\{-\overline z\col z\in\Gamma_1\}$ and $[-\frac32\pi,\frac32\pi]$.
Now the union of all $S_n$ ($n\in\Z$) equals the negative half plane, $\{z\col\Im z\leq0\}$,
and the $S_n$'s have pairwise disjoint interiors.
\begin{prop}\label{UNIQUEPOLELEM}
Let $a>1$ be given.
For each $n\in\Z$, the function $z\Psi_a(z)$ has a unique pole in the strip $S_n$.
This pole is simple, and lies in the interior of $S_n$.
\end{prop}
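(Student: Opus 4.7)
My plan is to apply Rouché's theorem on each strip $S_n$, comparing the function whose zeros are the poles of $z\Psi_a(z)$ to an explicit model.

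First I reformulate the pole condition. By \eqref{PSIADEF}, the poles of $z\Psi_a(z)$ coincide with the zeros of the entire function $D(z) := e^{iz} - iz\int_0^1 e^{izt}t^{-1/a}\,dt$. The substitution $u=zt$ in \eqref{PHIAYDEF} shows $\Phi_a(z) = z^{-1/a}D(z)$, so by \eqref{PHIAYFORMULA} these are precisely the zeros of $\gamma(-1/a,-iz)$; writing $\gamma(s,w)=\Gamma(s)-\Gamma(s,w)$, the pole condition becomes
\begin{equation*}
H(z) := \Gamma(-1/a) - \Gamma(-1/a, -iz) = 0.
\end{equation*}

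For the model, the Poincaré asymptotic $\Gamma(-1/a,w) = w^{-1-1/a}e^{-w}(1+O(|w|^{-1}))$ is valid as $|w|\to\infty$ in any sector $|\arg w|\le\tfrac{3\pi}{2}-\delta$. For $z$ in the closed lower half plane, $w=-iz$ lies in the closed left half plane, well inside this sector, so setting
\begin{equation*}
H_0(z) := \Gamma(-1/a) - (-iz)^{-1-1/a}e^{iz}
\end{equation*}
gives $|H(z)-H_0(z)| = O(|z|^{-2-1/a}|e^{iz}|)$ uniformly for $|z|\to\infty$ with $\Im z\le 0$. Simplicity of any zero $z_0$ of $H_0$ follows from the derivative formula $H_0'(z_0) = -i\Gamma(-1/a)(1+1/a-iz_0)/(-iz_0)$, which vanishes only at $z=-i(a+1)/a$; but $H_0(-i(a+1)/a) = \Gamma(-1/a) - ((a+1)/a)^{-(a+1)/a}e^{(a+1)/a}$ is strictly negative, ruling this point out as a zero.

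The main obstacle is a two-part claim: (i) $H_0$ has exactly one zero in each $S_n$, and (ii) the Rouché inequality $|H-H_0|<|H_0|$ holds on $\partial S_n(R) := \partial(S_n\cap\{|z|\le R\})$, closed off with a horizontal arc at $\Im z=-R$, for $R$ sufficiently large. Part (i) follows from an argument-principle computation along $\partial S_n$: the winding number of $(-iz)^{-1-1/a}e^{iz}/\Gamma(-1/a)$ around $1$ equals exactly $1$, so $H_0$ has a unique zero in $S_n$. Part (ii) exploits the special parametrization \eqref{CURVEDEF} of $\Gamma_n$: along $\Gamma_n$, $\arg(e^{iz})\equiv\Re z\pmod{2\pi}$ takes values in $((2n-\tfrac{3}{2})\pi,(2n-\tfrac{1}{2})\pi]$, and a direct computation shows that $\arg((-iz)^{-1-1/a}e^{iz})$ remains uniformly separated from $\arg\Gamma(-1/a)=\pi$ along $\Gamma_n\cup\Gamma_{n+1}$, forcing the two terms of $H_0$ to combine non-destructively and giving $|H_0(z)|\gg|\Gamma(-1/a)|+|(-iz)^{-1-1/a}e^{iz}|$. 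Since $|(-iz)^{-1-1/a}e^{iz}| = |z|^{-1-1/a}e^{\Re z\tan\theta_n(\Re z)}$ grows super-polynomially as one descends $\Gamma_n$, this dominates $|H-H_0|$, which is smaller by a factor $|z|^{-1}$. On the top segment $[(2n-\tfrac{1}{2})\pi,(2n+\tfrac{3}{2})\pi]\subset\R$ and on any bounded portion of the boundary, $|H_0|$ has a positive lower bound by compactness and non-vanishing; on the closing bottom arc, both $|H|$ and $|H_0|$ are dominated by $|(-iz)^{-1-1/a}e^{iz}|$, which is enormous. Applying Rouché on $\partial S_n(R)$ and letting $R\to\infty$ (the same asymptotic rules out zeros escaping to $\Im z=-\infty$), $H$ has exactly one zero in each $S_n$, lying in the interior, and simple by the stability of simplicity under the $O(|z|^{-1})$-small perturbation from $H_0$ to $H$.
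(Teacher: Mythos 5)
Your reformulation is in substance identical to the paper's: your $H_0(z)=\Gamma(-\frac1a)-(-iz)^{-1-\frac1a}e^{iz}$ is exactly the sum $w_1+w_2$ in the paper's decomposition \eqref{UNIQUEPOLELEMPF6} (after dividing $\eta_a$ by $-ae^{\frac{\pi i}{2a}}z^{-\frac1a}$), and your error $H-H_0=(1+\frac1a)\Gamma(-1-\frac1a,-iz)$ is exactly the term $w_3$; the paper also uses the geometry of $\Gamma_n$ to pin down $\arg\bigl((-iz)^{-1-\frac1a}e^{iz}\bigr)$ relative to $\arg\Gamma(-\frac1a)=\pi$ just as you propose. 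The difference is only that the paper runs the argument principle directly on $\eta_a=w_1+w_2+w_3$, whereas you count zeros of the model and transfer by Rouch\'e.

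The genuine gap is in establishing the Rouch\'e inequality $|H-H_0|<|H_0|$ on the parts of $\partial S_n$ where $|z|$ is \emph{not} large. Your only control on $|H-H_0|$ is the Poincar\'e asymptotic $O\bigl(|z|^{-2-\frac1a}|e^{iz}|\bigr)$ with an unspecified implied constant, which says nothing at the top real segment $[(2n-\frac12)\pi,(2n+\frac32)\pi]$ or on the upper reaches of $\Gamma_n$ (for $n=1$ the entire relevant modulus range starts near $|z|=\frac32\pi$). Saying that $|H_0|$ has a positive lower bound there ``by compactness and non-vanishing'' gives only one side of the inequality; you still need a matching \emph{explicit} upper bound on $|\Gamma(-1-\frac1a,-iz)|$, uniform in $a>1$ (note $|\Gamma(-\frac1a)|$ is not bounded away from the error by any soft argument as $a$ varies). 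Supplying precisely this bound is the technical heart of the paper's proof: Lemma \ref{SPECINCOMPLGAMMABOUNDLEM} proves the constant-free inequality $|\Gamma(-s,w)|<s^{-1}|w|^{-s}e^{-\Re w}$ on regions engineered to cover every point of $\Gamma_n$ (this is also why the strips are curved rather than straight), and on the real segment the paper avoids the comparison altogether by using $\Re\Phi_a>0$ there. Without such an estimate your Rouch\'e step is unproved exactly where it is least obvious. Two smaller points: simplicity of the zero of $H$ follows from Rouch\'e giving total multiplicity one, not from ``stability of simplicity'' under perturbation (and your separate computation with $H_0'$ evaluates $(-iz)^{-1-\frac1a}$ at a point of the branch cut); and the representation $H(z)=\Gamma(-\frac1a)-\Gamma(-\frac1a,-iz)$ degenerates on the negative imaginary axis, which bisects $S_0$, so as in the paper you should treat $S_0$ and $S_{-n}$ by the reflection symmetry rather than directly.
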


For the proof we need the following lemma.
We will use the definition \eqref{INCGAMMADEF} of $\Gamma(s,z)$ for general $z\in\C\setminus\R_{\leq0}$,
the integral %
being over %
the infinite ray $u\in z+\R_{>0}$.
\begin{lem}\label{SPECINCOMPLGAMMABOUNDLEM}
For any $s\in[1,2]$ and any $z=-x+iy\in\C$, satisfying either 
$\frac12\pi\leq|y|\leq\frac12x$, $\frac34\pi\leq|y|\leq x$ or $[x\geq0$ and $|y|\geq\pi]$,
we have
\begin{align}\label{SPECINCOMPLGAMMABOUNDLEMRES}
\bigl|\Gamma(-s,z)\bigr|<s^{-1}|z|^{-s}e^x.
\end{align}
\end{lem}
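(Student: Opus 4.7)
Starting from the integral representation
\[
\Gamma(-s,z) = e^{-z}\int_0^\infty (z+t)^{-s-1}e^{-t}\,dt,
\]
obtained by parametrizing the defining integral along the horizontal ray $u = z+t$ with $t \geq 0$, and noting $|e^{-z}| = e^{x}$, the claim of the lemma reduces to
\[
\int_0^\infty |z+t|^{-s-1}e^{-t}\,dt \;<\; \frac{1}{s\,|z|^{s}}.
\]
The natural next move is to integrate by parts once, using $\frac{d}{dt}(z+t)^{-s} = -s(z+t)^{-s-1}$, which shows (without yet taking absolute values) that the left-hand side equals $\frac{1}{s}\bigl(z^{-s} - \int_0^\infty (z+t)^{-s}e^{-t}\,dt\bigr)$. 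For real $z>0$ the desired bound is then immediate, since the subtracted integral is real, positive and smaller than $z^{-s}$; the substance of the lemma is the complex case, where one must analyze the cancellation.

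To make the geometry transparent I would change variables via $t = x + |y|\tau$ and then $\sigma = \tau + \mu$, where $\mu := x/|y|$ and $\alpha := |y|$; this centres the integration at the point $t = x$ of the ray closest to the origin. A direct computation transforms the desired inequality into
\[
(1+\mu^2)^{s/2}\int_0^\infty\bigl((\sigma-\mu)^2+1\bigr)^{-(s+1)/2} e^{-\alpha\sigma}\,d\sigma \;<\; s^{-1}. \qquad (\star)
\]
In each of the three hypothesis ranges one has $\alpha \geq \pi/2 > (s+1)/2$, and a short calculus argument (setting the log-derivative of the integrand to zero and observing that the resulting quadratic has negative discriminant) then shows that the integrand in $(\star)$ is strictly decreasing on $[0,\infty)$. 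I would prove $(\star)$ by splitting at $\sigma = \mu$: on $[\mu,\infty)$ the polynomial factor is bounded by $1$, so the tail contribution is at most $e^{-\alpha\mu}/\alpha = e^{-x}/\alpha$; on $[0,\mu]$ I would substitute $\tau = \mu-\sigma$ and apply a single integration by parts to $\int_0^\mu (\tau^2+1)^{-(s+1)/2}e^{\alpha\tau}\,d\tau$, which extracts a leading term $(\mu^2+1)^{-(s+1)/2}/\alpha$ together with smaller remainders.

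The main obstacle will be achieving the sharp constant $s^{-1}$ uniformly across all three hypothesis ranges. The dominant contribution to the left-hand side of $(\star)$ after the estimates above is of order $(1+\mu^2)^{s/2}\cdot(\mu^2+1)^{-(s+1)/2}/\alpha = \bigl(\alpha\sqrt{1+\mu^2}\bigr)^{-1}$. This is worst either when $\mu$ is small, in which case one essentially needs $\alpha > s$ (handled by case (c) via $\alpha \geq \pi > 2 \geq s$), or when $\mu$ is large, in which case one needs $\alpha\sqrt{1+\mu^2} > s$ and hence essentially $x > s$ (handled in case (a) by $x \geq 2\alpha \geq \pi$, and in case (b) by $x \geq \alpha \geq 3\pi/4$). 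The three hypothesis ranges in the lemma thus appear to be tailored to a careful trade-off between these two constraints, and I expect the final verification of $(\star)$ to reduce to elementary but moderately delicate case analysis on $(\mu,\alpha)$ together with explicit bounds on the integration-by-parts remainders.
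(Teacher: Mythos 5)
There is a genuine gap, and it occurs at the very first step. Your reduction to
$\int_0^\infty|z+t|^{-s-1}e^{-t}\,dt<s^{-1}|z|^{-s}$ (equivalently, to $(\star)$) replaces the oscillatory integrand by its modulus along the \emph{horizontal} ray $u=z+t$, and the resulting inequality is false in part of the hypothesis range. Take $s=2$ and $z=-\pi+\tfrac{\pi}{2}i$, which lies in the first range ($|y|=\tfrac\pi2=\tfrac12x$). Then $s^{-1}|z|^{-s}=\tfrac12(\tfrac54\pi^2)^{-1}\approx0.0405$, whereas
$\int_0^\infty\bigl((t-\pi)^2+\tfrac{\pi^2}{4}\bigr)^{-3/2}e^{-t}\,dt\approx0.066$
(a rigorous lower bound of $0.050$ already follows from a crude lower Riemann sum using the elementary antiderivative of $((t-x)^2+y^2)^{-3/2}$). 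In the normalized form, the left side of $(\star)$ is about $0.81>\tfrac12$. The source of the failure is that the horizontal ray passes within distance $|y|$ of the origin near $t=x$, so the modulus of the integrand there is $(|z|/|y|)^{s+1}$ times larger than at $t=0$, and the factor $e^{-t}$ does not compensate when $x/|y|$ is of moderate size and $|y|$ is near $\tfrac\pi2$. Your own error analysis reflects this: the ``integration-by-parts remainder'' on $[0,\mu]$ is not smaller than the leading term $1/(\alpha\sqrt{1+\mu^2})$ in this regime, but comparable to or larger than it. The lemma is still true at such $z$ only because of cancellation in $\arg\bigl((z+t)^{-s-1}\bigr)$, which you acknowledge in passing but then discard for the remainder of the argument.

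The fix is the one used in the paper: deform the contour to the ray $\{z+t(1+ki):t\ge0\}$ with $k=1$. Every point $u$ of that ray satisfies $|u|\ge(|y|+x)/\sqrt2\ge|z|/\sqrt2$, so there is no dip in $|u|^{-s-1}$ and the crude bound $|e^{-u}|=e^{x-t}$ already yields
$|\Gamma(-s,z)|\le 2^{1+s/2}(|y|+x)^{-s-1}e^x$, after which the three hypothesis ranges reduce to elementary inequalities in $x,y$; the horizontal ray ($k=0$) is adequate only in the subcase $|y|\ge\pi$, $0\le x<\tfrac34|y|$, where it stays away from the origin. If you want to keep the horizontal ray throughout, you would have to retain the phase and genuinely exploit the sign changes of $\Re\bigl((z+t)^{-s-1}e^{-t}\bigr)$ near $t=x$, which is considerably more delicate than the contour tilt.
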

\begin{proof}
Take $s$ and $z=-x+iy$ satisfying the assumptions. %
By symmetry, we may assume $y>0$. %
We may deform the contour of integration in \eqref{INCGAMMADEF} %
to be the ray
$\{z+t(1+ki)\col t\geq0\}$, where $k$ is any fixed non-negative number.
This ray intersects the imaginary axis at $(y+kx)i$, and thus
$|u|\geq (y+kx)(1+k^2)^{-1/2}$ holds for every point $u$ on the ray, and
\begin{align}\label{SPECINCOMPLGAMMABOUNDLEMPF1}
\bigl|\Gamma(-s,z)\bigr| 
\leq\frac{(1+k^2)^{\frac{s+1}2}}{(y+kx)^{s+1}}\int_0^\infty e^{x-t}(1+k^2)^{\frac12}\,dt
=\frac{(1+k^2)^{1+\frac s2}}{(y+kx)^{s+1}}e^x.
\end{align}
Applying this with $k=1$,
we see that \eqref{SPECINCOMPLGAMMABOUNDLEMRES} holds whenever
$s\bigl(\frac{\sqrt2 |z|}{x+y}\bigr)^s<\frac{x+y}2$.
But $\frac{\sqrt2 |z|}{x+y}\geq1$ for all non-zero $z$ and thus the inequality
holds for all $s\in[1,2]$ if and only if it holds for $s=2$,
i.e.\ if and only if $\frac{x^2+y^2}{(x+y)^3}<\frac18$.
However, it is easily verified %
that $\frac{x^2+y^2}{(x+y)^3}$ is a decreasing function of $x>0$ for any fixed $y\geq0$.
Hence, if $x\geq y\geq\frac34\pi$, then
$\frac{x^2+y^2}{(x+y)^3}\leq\frac1{4y}\leq\frac1{4\cdot\frac34\pi}<\frac18$;
similarly, if $x\geq2y\geq\pi$, then
$\frac{x^2+y^2}{(x+y)^3}\leq\frac5{27\cdot\frac12\pi}<\frac18$,
and if $y\geq\pi$ and $x\geq\frac34y$, then
$\frac{x^2+y^2}{(x+y)^3}\leq %
\frac{100}{343y}\leq\frac{100}{343\pi}<\frac18$.
To treat the remaining case, when $y\geq\pi$ and $0\leq x<\frac34y$,
we apply \eqref{SPECINCOMPLGAMMABOUNDLEMPF1} with $k=0$;
from this we see that \eqref{SPECINCOMPLGAMMABOUNDLEMRES} holds whenever $s(|z|/y)^s<y$.
However, if $y\geq\pi$ and $0\leq x<\frac34y$, then
$s(|z|/y)^s\leq2(|z|/y)^2<\frac{25}8<\pi\leq y$, and we are done.
\end{proof}

We also record the following bound, which follows from \eqref{SPECINCOMPLGAMMABOUNDLEMPF1} with $k=1$:
\begin{lem}\label{INCPGAMMAEASYLEM}
The bound $\bigl|\Gamma(-s,z)\bigr|\ll |z|^{-s-1}e^{-\Re z}$ holds uniformly for all
$s\in[1,2]$ and all $z\in\C$ with $\Re z\leq0$, $\Im z\neq0$.
\end{lem}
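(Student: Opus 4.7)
The plan is to deduce this bound essentially verbatim from the contour estimate \eqref{SPECINCOMPLGAMMABOUNDLEMPF1} established in the proof of Lemma \ref{SPECINCOMPLGAMMABOUNDLEM}, specialized to the choice $k=1$. The hypothesis $\Re z\leq0$, $\Im z\neq0$ means we may write $z=-x+iy$ with $x\geq0$ and $y\neq0$; since $s$ is real we have $\Gamma(-s,\bar z)=\overline{\Gamma(-s,z)}$, so it suffices to treat the case $y>0$. Under these conditions the ray $\{z+t(1+i)\col t\geq0\}$ used in the derivation of \eqref{SPECINCOMPLGAMMABOUNDLEMPF1} moves into the right/upper half-plane and in particular avoids the branch cut $\R_{\leq0}$, so \eqref{SPECINCOMPLGAMMABOUNDLEMPF1} is applicable and yields
\begin{equation*}
\bigl|\Gamma(-s,z)\bigr|\leq \frac{2^{1+s/2}}{(x+y)^{s+1}}\,e^{x}.
\end{equation*}

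Next, I would observe that because $x\geq0$ and $y>0$ we have $(x+y)^2=x^2+2xy+y^2\geq x^2+y^2=|z|^2$, hence $(x+y)^{s+1}\geq|z|^{s+1}$ (note $s+1\geq2>0$). Moreover, for $s\in[1,2]$ the factor $2^{1+s/2}$ is bounded by $4$, an absolute constant. Combining these with $e^{x}=e^{-\Re z}$ gives
\begin{equation*}
\bigl|\Gamma(-s,z)\bigr|\leq \frac{4\,e^{-\Re z}}{|z|^{s+1}},
\end{equation*}
which is exactly the claimed uniform bound.

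There is essentially no obstacle here: the whole point is that the earlier argument already produced the strong form of the bound for $k=1$, but was stated for Lemma \ref{SPECINCOMPLGAMMABOUNDLEM} under a restricted range of $(x,y)$; the present lemma simply isolates the case where the geometric inequality $(x+y)\geq|z|$ is automatic, namely when $x=-\Re z\geq0$. The only small thing to verify is the symmetry reduction to $y>0$ and that the deformed contour remains valid, both of which are routine.
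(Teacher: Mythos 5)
Your proof is correct and is exactly the paper's argument: the paper disposes of this lemma with the one-line remark that it ``follows from \eqref{SPECINCOMPLGAMMABOUNDLEMPF1} with $k=1$,'' and you have simply supplied the routine details (symmetry reduction to $\Im z>0$, validity of the contour deformation when $\Re z\leq 0$, and the inequality $x+y\geq|z|$) that make that remark precise.
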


\begin{proof}[Proof of Proposition \ref{UNIQUEPOLELEM}]
Let $\eta_a(z)=z^{\frac1a}\Phi_a(z)=e^{iz}-iz\int_0^1 e^{izt}t^{-\frac1a}\,dt$ and note that $\eta_a$ is an entire function.
By \eqref{PSIADEF}, our task is to prove that for each $n$,
$\eta_a(z)$ has a unique zero in $S_n$, which is %
simple and lies in the interior of $S_n$.
Using \eqref{PHIAYFORMULA} and applying the recursion formula
$\Gamma(s,z)=e^{-z}z^{s-1}+(s-1)\Gamma(s-1,z)$ twice, we find that for 
$z$ with $\Re z>0$, we have
\begin{align}\label{UNIQUEPOLELEMPF6}
\eta_a(z)
&=w_1+w_2+w_3
\quad
\text{with }\:
\begin{cases}
w_1=(-iz)^{\frac1a}\Gamma(1-a^{-1})
\\
w_2=a^{-1}(-iz)^{-1}e^{iz}
\\
w_3=-\frac{a+1}{a^2}(-iz)^{\frac1a}\Gamma\bigl(-1-a^{-1},-iz\bigr),
\end{cases}
\end{align}
wherein $(-iz)^{\frac1a}=\exp(\frac1a\log(-iz))$ with the principal branch of the logarithm;
$-\pi<\Im\log(-iz)<0$.

Let $n\in\Z^+$ and $z=x-iy\in\Gamma_n$.
We wish to apply Lemma \ref{SPECINCOMPLGAMMABOUNDLEM} with $s=1+a^{-1}$ and with $-iz$ in place of $z$.
In order to justify this application, we have to check that either
$x\geq\pi$, $y\geq x\geq\frac34\pi$ or $y\geq2x\geq\pi$;
this is clear if $n\geq2$, since then $x>\pi$, %
and if $n=1$, then
the claim follows using \eqref{CURVEDEF}, $\tan(\frac14\pi)=1$ and $\tan(\frac38\pi)>2$.
The conclusion from Lemma \ref{SPECINCOMPLGAMMABOUNDLEM} is that %
$|w_3|<|w_2|$ holds in \eqref{UNIQUEPOLELEMPF6}.   %
We also note that
$\arg\bigl(w_1/w_2\bigr)\in(1+a^{-1})(-\tfrac12\pi+\arg(z))-x+2\pi\Z$,
and by \eqref{CURVEDEF}, %
we have $x\in((2n-\frac32)\pi,(2n-\frac12)\pi]$ and
$\arg(z)=-(n-\frac14)\pi+\frac12x\in(-\frac12\pi,0]$;
together these imply that $\arg\bigl(-ize^{-iz}w_1\bigr)$ lies in
$\bigl[-\frac12a^{-1}\pi,(\frac12-a^{-1})\pi\bigr)\subset(-\frac12\pi,\frac12\pi)$,
i.e.\ that $\Re(w_1/w_2)>0$. 
Moreover, $|w_3|<|w_2|$ forces $\Re((w_2+w_3)/w_2)>0$;
hence we conclude that $\Re((w_1+w_2+w_3)/w_2)>0$, i.e.\ that
\begin{align}\label{UNIQUEPOLELEMPF4}
\Re(-ize^{-iz}\eta_a(z))>0\quad\text{for all }\: z\in\Gamma_n.
\end{align}
This shows that $\eta_a(z)$ has no zeros along $\Gamma_n$,
and also gives a precise control on the variation of $\arg\eta_a(z)$ along $\Gamma_n$.

Next, from \eqref{UNIQUEPOLELEMPF6} and Lemma \ref{INCPGAMMAEASYLEM}, we see that 
for $z=x-iy$ with $y$ large and $x>0$ bounded,  %
we have 
$\eta_a(z)=w_1+w_2+w_3=w_2(1+O(y^{-1}))$,
and thus $\arg\eta_a(z)\in\pi+x+O(y^{-1})+2\pi\Z$.
Also note that $\Re\eta_a(z)>0$ for all $z\geq0$, %
since $\Re\Phi_a(z)>0$ for all $z>0$ (as noted previously) 
and %
$\eta_a(0)=1$.
Using these facts together with \eqref{UNIQUEPOLELEMPF4} (applied both for $n$ and $n+1$),
we conclude that for any $n\in\Z^+$ and any sufficiently large $Y>0$ (depending on both $a$ and $n$), 
$\arg\eta_a(z)$ increases by $2\pi$ as $z$ travels around the boundary of $S_n\cap\{\Im z\geq -Y\}$ in the positive direction.
Hence, by the argument principle, $\eta_a(z)$ has a unique simple zero in the interior of $S_n$.
Using the symmetry $\eta_a(-\overline z)=\overline{\eta_a(z)}$, one proves the same fact also for $S_0$ and
any $S_n$, $n<0$.
This completes the proof of the proposition. 
\end{proof}

From now on, we write $\zeta_n=\zeta_n(a)$ for the unique pole of $z\Psi_a(z)$ in $S_n$ ($n\in\Z$).
By symmetry we have $\zeta_{-n}=-\overline{\zeta_n}$ for all $n$, and in particular $\zeta_0$ lies
on the negative imaginary axis.
Figure \ref{ZEROTRACKINGFIG} below shows the curves traced by $\zeta_0,\ldots,\zeta_4$ as $a$ varies.

The next lemma gives an asymptotic formula for $\zeta_n$ ($n>0$) with an error which is small whenever at least one of
$n$, $a$ and $(a-1)^{-1}$ is large.
\begin{lem}\label{ZETANUNIFASYMPTLEM1}
We have, uniformly over all $a>1$ and all $n\in\Z^+$,
\begin{align}\label{ZETANUNIFASYMPTLEM1RES}
\zeta_n=(2n-a^{-1})\pi+(1+a^{-1})\arctan\Bigl(\frac{2\pi n}{Y_n}\Bigr)
-iY_n+O\biggl(\frac1{n+\log\bigl|\Gamma(-a^{-1})\bigr|}\biggr),
\end{align}
where $Y_n$ equals the unique root $y>0$ of the equation
\begin{align}\label{ZETANUNIFASYMPTLEM2}
y-\sfrac12(1+a^{-1})\log\bigl((2\pi n)^2+y^2\bigr)=\log\bigl|\Gamma(-a^{-1})\bigr|.
\end{align}
\end{lem}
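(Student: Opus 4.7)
The plan is to translate $\eta_a(\zeta_n)=0$ into an explicit transcendental equation in $\zeta_n$ and then extract the asymptotics by a fixed-point iteration. Starting from the decomposition $\eta_a(z)=w_1+w_2+w_3$ in \eqref{UNIQUEPOLELEMPF6}, I would use $\Gamma(1-a^{-1})=-a^{-1}\Gamma(-a^{-1})$ to rewrite $w_1=-a^{-1}(-i\zeta_n)^{1/a}\Gamma(-a^{-1})$, so that the equation $w_1+w_2+w_3=0$ takes the form
\[
(-i\zeta_n)^{1+1/a}\,\Gamma(-a^{-1})\;=\;e^{i\zeta_n}\bigl(1+R(\zeta_n)\bigr),\qquad R(\zeta_n):=aw_3(-i\zeta_n)e^{-i\zeta_n}.
\]
Writing $\zeta_n=x-iy$ with $x,y>0$, one has $\Re(-i\zeta_n)=-y\le0$, so Lemma \ref{INCPGAMMAEASYLEM} applied with $s=1+a^{-1}$ gives $|\Gamma(-1-a^{-1},-i\zeta_n)|\ll|\zeta_n|^{-2-1/a}e^{y}$; the $e^{\pm y}$ factors in $R(\zeta_n)$ cancel and one is left with $|R(\zeta_n)|\ll|\zeta_n|^{-1}$. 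Since $\zeta_n\in\mathrm{int}\,S_n$ forces $|\Re\zeta_n|\asymp n$ for $n\ge1$, and the defining relation \eqref{ZETANUNIFASYMPTLEM2} gives $Y_n\ge\log|\Gamma(-a^{-1})|$, we will obtain $|\zeta_n|\gtrsim n+Y_n\gtrsim n+\log|\Gamma(-a^{-1})|$ and hence $|R(\zeta_n)|$ of the size of the claimed error.

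Next I would take principal logarithms. With $-i\zeta_n=-y-ix$ in the third quadrant, $\log(-i\zeta_n)=\tfrac12\log(x^2+y^2)+i(-\pi+\arctan(x/y))$; and since $\Gamma(-a^{-1})<0$ for $a>1$, $\log\Gamma(-a^{-1})=\log|\Gamma(-a^{-1})|+i\pi$. The integer branch ambiguity is pinned down by the a priori constraint $\Re\zeta_n\in((2n-\tfrac32)\pi,(2n+\tfrac32)\pi)$ inherited from $\zeta_n\in\mathrm{int}\,S_n$. Separating real and imaginary parts, $\eta_a(\zeta_n)=0$ becomes the system
\begin{align*}
y-\tfrac12(1+a^{-1})\log(x^2+y^2)&=\log|\Gamma(-a^{-1})|+O\bigl(|\zeta_n|^{-1}\bigr),\\
x-(1+a^{-1})\arctan(x/y)&=(2n-a^{-1})\pi+O\bigl(|\zeta_n|^{-1}\bigr).
\end{align*}
Setting $y_0:=Y_n$ and $x_0:=(2n-a^{-1})\pi+(1+a^{-1})\arctan(2\pi n/Y_n)$, one has $|x_0-2\pi n|\le\pi$, and the Lipschitz estimates $|\arctan(x_0/Y_n)-\arctan(2\pi n/Y_n)|\ll(n+Y_n)^{-1}$ and $|\log(x_0^2+Y_n^2)-\log((2\pi n)^2+Y_n^2)|\ll(n+Y_n)^{-1}$ (both obtained by the mean-value theorem using the bounds $y/(x^2+y^2)\le1/(2x)$ and $x/(x^2+y^2)\le2/(x+y)$) show that $(x_0,y_0)$ satisfies the system with residual $O(1/(n+Y_n))$.

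Finally I would close the argument by a quantitative implicit-function step. The Jacobian of the two left-hand sides at a point $(x,y)$ near $(x_0,y_0)$ is $\bigl(\begin{smallmatrix}-A & B\\ B & A\end{smallmatrix}\bigr)$ with $A=(1+a^{-1})x/(x^2+y^2)\ll(n+Y_n)^{-1}$ and $B=1-(1+a^{-1})y/(x^2+y^2)\ge1-2/\pi$ (using $y/(x^2+y^2)\le1/(2x)$ and $x\ge\pi/2$ for $n\ge1$); hence $|\det J|\ge B^2$ is bounded below by a positive absolute constant, and $J^{-1}$ has bounded operator norm. Since the second derivatives are of similar small size in an $O(1)$-neighborhood of $(x_0,y_0)$, one Newton step (or a standard contraction argument) yields $|x-x_0|+|y-y_0|\ll1/(n+Y_n)\le1/(n+\log|\Gamma(-a^{-1})|)$, which is \eqref{ZETANUNIFASYMPTLEM1RES}. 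The main obstacle I anticipate is enforcing this uniformly in $a>1$: Lemma \ref{INCPGAMMAEASYLEM} must be shown to control $R(\zeta_n)$ with an absolute constant throughout $\mathrm{int}\,S_n$ (the proof of Proposition \ref{UNIQUEPOLELEM} only invokes Lemma \ref{SPECINCOMPLGAMMABOUNDLEM} on the boundary curves $\Gamma_n$); one must verify that $\log|\Gamma(-a^{-1})|$ is bounded below by a positive absolute constant on $a>1$, so that the error term in \eqref{ZETANUNIFASYMPTLEM1RES} is well-defined and $Y_n$ has the correct scale; and the Jacobian and second-derivative estimates in the contraction step have to stay uniform in both extreme regimes $a\to1^+$ and $a\to\infty$, where $\log|\Gamma(-a^{-1})|$ blows up.
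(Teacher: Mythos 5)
Your overall strategy coincides with the paper's: both arguments start from $\eta_a(\zeta_n)=0$ and the decomposition \eqref{UNIQUEPOLELEMPF6}, use Lemma \ref{INCPGAMMAEASYLEM} to absorb the incomplete-gamma term into a relative error $O(|\zeta_n|^{-1})$, and then split the resulting relation $(-i\zeta_n)^{1+1/a}\Gamma(-a^{-1})=e^{i\zeta_n}\bigl(1+O(|\zeta_n|^{-1})\bigr)$ into a modulus equation and an argument equation. Your concluding step (a $2\times2$ Newton/contraction argument) differs only cosmetically from the paper's sequential solve (first $y_n=Y_n+O(|\zeta_n|^{-1})$ from the monotone modulus equation, then $x_n$ from the argument equation), and your Jacobian estimates are correct. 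The caveats you list at the end are all benign: Lemma \ref{INCPGAMMAEASYLEM} applies at every interior point of $S_n$ since $\Re(-iz)=\Im z<0$ there; the paper records $|\Gamma(-a^{-1})|>3$, so $\log|\Gamma(-a^{-1})|>1$ uniformly; and the bootstrap giving $|\zeta_n|\gg n+\log|\Gamma(-a^{-1})|$ from the modulus equation goes through as you anticipate.

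The one genuine gap is the determination of the branch integer. Writing the argument equation as $x=(2k-a^{-1})\pi+(1+a^{-1})\arctan(x/y)+O(|\zeta_n|^{-1})$ with $k\in\Z$, the constraint $\Re\zeta_n\in\bigl((2n-\tfrac32)\pi,(2n+\tfrac32)\pi\bigr)$ does rule out $k\leq n-1$, because the right-hand side with branch $k$ is always $<(2k+\tfrac12)\pi+O(|\zeta_n|^{-1})$; but it does \emph{not} rule out $k=n+1$. That branch would require $x>(2n+1)\pi$ together with $\arctan(x/y)<\frac{(a^{-1}-1/2)\pi}{1+a^{-1}}+O(|\zeta_n|^{-1})$, and for $a$ near $1$ (where the bound tends to $\pi/4$) this is compatible with $x\in\bigl((2n+1)\pi,(2n+\tfrac32)\pi\bigr)$ and $y$ large --- a configuration the horizontal extent of $S_n$ alone does not exclude. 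What excludes it is the \emph{curved} shape of the right boundary $\Gamma_{n+1}$: a point of $S_n$ with $\Re z>(2n+\tfrac12)\pi$ has $|\Im z|$ controlled by $\Re z$, which the paper encodes as $x_n<2\arg(\zeta_n)+(2n+\tfrac32)\pi$ and then feeds back into the argument equation to force $k\leq n$. Without some such input coupling $\Re\zeta_n$ to $\arg\zeta_n$, your formula for $\Re\zeta_n$ could be off by $2\pi$, so this step needs to be repaired before the rest of your argument (which is otherwise sound) can close.
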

(Regarding the error term in \eqref{ZETANUNIFASYMPTLEM1RES}, we remark that $|\Gamma(-a^{-1})|>3$, 
and thus that $\log|\Gamma(-a^{-1})|>1$, for all $a>1$.)

\begin{proof}
Using \eqref{UNIQUEPOLELEMPF6} and Lemma \ref{INCPGAMMAEASYLEM}, 
together with the fact that $\eta_a(\zeta_n)=0$, we get
\begin{align}\label{ZETANUNIFASYMPTLEM1PF4}
\Gamma(-a^{-1})=(-i\zeta_n)^{-1-\frac1a}e^{i\zeta_n}(1+O(|\zeta_n|^{-1})),
\qquad\forall a>1,\: n\in\Z^+.
\end{align}
Writing $\zeta_n=x_n-iy_n$ ($x_n,y_n>0$) and taking absolute values in \eqref{ZETANUNIFASYMPTLEM1PF4}, we get
\begin{align}\label{ZETANUNIFASYMPTLEM1PF4a}
\bigl|\Gamma(-a^{-1})\bigr|=|\zeta_n|^{-1-\frac1a}e^{y_n}(1+O(|\zeta_n|^{-1})).
\end{align}
Now, using the facts that $|\zeta_n|\geq x_n>(2n-\frac32)\pi\gg n$ 
and $|\Gamma(-a^{-1})|\to\infty$ as $a\to1^+$ or $a\to\infty$,
we conclude that $y_n$ must be large whenever at least one of $n$, $a$ and $(a-1)^{-1}$ is large;
and due to the form of the error term in \eqref{ZETANUNIFASYMPTLEM1RES}, we may without loss of generality
restrict to the case when this holds.
Note that also $|\zeta_n|$ must be large, since $|\zeta_n|\geq y_n$.

In more precise terms, we have, considering the logarithm of equation \eqref{ZETANUNIFASYMPTLEM1PF4a},
\begin{align}\label{ZETANUNIFASYMPTLEM1PF1}
y_n=\tfrac12(1+a^{-1})\log(x_n^2+y_n^2)+\log\bigl|\Gamma(-a^{-1})\bigr|+O(|\zeta_n|^{-1}).
\end{align}
In particular, using $(2n-\frac32)\pi<x_n<(2n+\frac32)\pi$ 
and also $\log(x_n^2+y_n^2)\leq \frac12y_n+2\log n$ (which holds since $y_n$ is large), we conclude that
\begin{align}\label{ZETANUNIFASYMPTLEM1PF12}
y_n\asymp\log n+\log|\Gamma(-a^{-1})|;
\quad\text{and thus }\:
|\zeta_n|\asymp x_n+y_n\asymp n+\log\bigl|\Gamma(-a^{-1})\bigr|.
\end{align}
(Note: ``$\asymp$'' means ``both $\ll$ and $\gg$''.)
Now $x_n^2+y_n^2=((2\pi n)^2+y_n^2)(1+O(|\zeta_n|^{-1}))$,
and thus, in \eqref{ZETANUNIFASYMPTLEM1PF1}, we may replace ``$\log(x_n^2+y_n^2)$'' by
``$\log((2\pi n)^2+y_n^2)$''; the error from this operation is subsumed in the error term $O(|\zeta_n|^{-1})$.
We also note that the expression in the left-hand side of \eqref{ZETANUNIFASYMPTLEM2} is an increasing
function of $y>0$, which is negative for small $y$ and 
the derivative of which lies in the interval $(1-(2\pi)^{-1},1]$, for all $y>0$.
It follows that $Y_n$ (in the statement of the lemma) is well-defined,
and also that
\begin{align}\label{ZETANUNIFASYMPTLEM1PF10}
y_n=Y_n+O\bigl(|\zeta_n|^{-1}\bigr).
\end{align}

Next, taking the argument of both sides of \eqref{ZETANUNIFASYMPTLEM1PF4}, we get
\begin{align}\label{ZETANUNIFASYMPTLEM1PF11}
x_n=(1-a^{-1})\tfrac\pi2+(1+a^{-1})\arg(\zeta_n)+2k\pi+O(|\zeta_n|^{-1})
\quad\text{for some }\:k\in\Z,
\end{align}
where $-\frac\pi2<\arg(\zeta_n)<0$.
Clearly $(2k-1)\pi-O(|\zeta_n|^{-1})<x_n<(2k+\frac12)\pi+O(|\zeta_n|^{-1})$,
and in fact, since $-\arg(\zeta_n)\gg y_n|\zeta_n|^{-1}$ and $y_n$ is large, we even have
$x_n<(2k+\frac12)\pi$.
But also $x_n>(2n-\frac32)\pi$; hence $k\geq n$.
On the other hand, since $\zeta_n$ lies to the left of the curve
$\Gamma_{n+1}$, we have $x_n<2\arg(\zeta_n)+(2n+\frac32)\pi$, and 
using this fact in \eqref{ZETANUNIFASYMPTLEM1PF11}, we get
$(1-a^{-1})\arg(\zeta_n)>(2(k-n)-1-\frac12a^{-1})\pi-O(|\zeta_n|^{-1})$.
This forces $k\leq n$, since $\arg(\zeta_n)<0$ and $|\zeta_n|$ is large.
Hence we have proved that $k=n$.
Finally, using \eqref{ZETANUNIFASYMPTLEM1PF10} and $(2n-\frac32)\pi<x_n<(2n+\frac32)\pi$, we get
$\bigl|\arg(\zeta_n)+\arctan(\frac{Y_n}{2\pi n})\bigr|\ll Y_n|\zeta_n|^{-2}\ll|\zeta_n|^{-1}$.
Now \eqref{ZETANUNIFASYMPTLEM1RES} follows from \eqref{ZETANUNIFASYMPTLEM1PF10}, \eqref{ZETANUNIFASYMPTLEM1PF11}
and \eqref{ZETANUNIFASYMPTLEM1PF12}.
\end{proof}

We may also remark that $Y_n$, as defined in Lemma \ref{ZETANUNIFASYMPTLEM1}, satisfies
\begin{align}\label{YNASYMPT}
Y_n=G+\frac{1+a^{-1}}2\biggl(1+\frac{(1+a^{-1})G}{(2\pi n)^2+G^2}\biggr)\log\bigl((2\pi n)^2+G^2\bigr)
+O\Bigl(\frac1{n+G}\Bigr),
\end{align}
with $G=\log|\Gamma(-a^{-1})|$.
This is proved by direct substitution in \eqref{ZETANUNIFASYMPTLEM2}, %
using the properties of the left-hand side in \eqref{ZETANUNIFASYMPTLEM2} noted in the proof of 
Lemma \ref{ZETANUNIFASYMPTLEM1}.

\vspace{5pt}

We will now change the contour in \eqref{CONTOURCHANGE1}.
Let $a>1$ be given, and fix $r>0$ sufficiently small so that
\eqref{CONTOURCHANGE1} holds.
For $n\in\Z^+$ and $Y>0$, we let 
$z_{n,Y}$ be the unique point where $\Gamma_n$ intersects $\{\Im z=-Y\}$,
and let $C_{n,Y}$ be the contour going from $-\infty$ to $-(2n-\frac12)\pi$ along $\R$,
then along $\Gamma_{-n}:=\{-\overline z\col z\in \Gamma_n\}$ to $-\overline{z_{n,Y}}$,
then along $\{\Im z=-Y\}$ to $z_{n,Y}$, %
further along $\Gamma_n$ to $(2n-\frac12)\pi$, and finally %
along $\R$ to $+\infty$. %
By the residue theorem and Proposition \ref{UNIQUEPOLELEM},
for every $n\in\Z^+$ there is some $Y_0=Y_0(a,n)>0$ such that for $Y>Y_0$, we have
\begin{align}\label{CONTOURCHANGE2}
\frac1{2\pi i}\int_{C_r}\Psi_a(z)\,dz
=\frac1{2\pi i}\int_{C_{n,Y}}\Psi_a(z)\,dz-\sum_{m=1-n}^{n-1}\Res_{z=\zeta_m}\Psi_a(z).
\end{align}
Now let $w_1,w_2,w_3$ be as in \eqref{UNIQUEPOLELEMPF6}.
By Lemma \ref{INCPGAMMAEASYLEM} there is an $N=N(a)\in\Z^+$ such that
$|w_3|\leq\frac12|w_2|$ for all $z\in\Gamma_n$, $n\geq N$.
Using also $\Re(w_1/w_2)>0$ for all $z\in\Gamma_n$, we get
$|\eta_a(z)|=|w_1+w_2+w_3|\geq\frac{\sqrt3}2|w_1|$ %
and thus %
$|\Psi_a(z)|\ll n^{-1-\frac1a}e^{\Im z}$ for all $n\geq N$ and $z\in\Gamma_n$.
Also, for any fixed $a$ and $n$, we have $|\eta_a(z)|\gg Y^{-1}e^Y$ for all $z\in C_{n,Y}\cap\{\Im z=-Y\}$
(cf.\ Lemma \ref{INCPGAMMAEASYLEM} and \eqref{UNIQUEPOLELEMPF6});
thus $|\Psi_a(z)|\ll e^{-2Y}$ for these $z$.
The above bounds imply
$\lim_{n\to\infty}\bigl(\lim_{Y\to\infty}\int_{C_{n,Y}}|\Psi_a(z)|\,|dz|\bigr)=0$,
and so %
\begin{align}\label{DISTREXPLFORMULA}
\PP\bigl(\sigma_{\{T_j\}}>c\bigr)
=\frac1{2\pi i}\int_{C_r}\Psi_a(z)\,dz=-\lim_{n\to\infty}\sum_{m=-n}^{n}\Res_{z=\zeta_m}\Psi_a(z)
=\sum_{n=-\infty}^{\infty} ae^{-2i\zeta_n}.
\end{align}
Here the last equality follows from an easy calculation using \eqref{PHIAYDER} and \eqref{PSIADEF},
noticing that the sum is absolutely convergent,
since, by Lemma \ref{ZETANUNIFASYMPTLEM1} and \eqref{YNASYMPT}, we have
\begin{align}\label{BOUNDFORABSCONV1}
\bigl|ae^{-2i\zeta_n}\bigr|\ll ae^{-2G}(|n|+G)^{-2(1+\frac1a)},\qquad\forall a>1,\: n\in\Z\setminus\{0\}.
\end{align}
One also checks that the formula \eqref{DISTREXPLFORMULA} may be differentiated termwise with respect to $a$,
yielding %
\begin{align}\label{DENSITYEXPLFORMULA}
f(c)=2\sum_{n=-\infty}^\infty e^{-2i\zeta_n}\Bigl(2ai\Bigl(\frac d{da}\zeta_n\Bigr)-1\Bigr)
\end{align}
for the density function (cf.\ \eqref{DENSITY}).

\vspace{5pt}

For $c$ not too large, the formula \eqref{DENSITYEXPLFORMULA} can be used to compute $f(c)$ numerically to a decent precision.
We have implemented this in \cite[numdensity.mpl]{sts}.
Our experiments indicate that for any given $a>1$ ($a=2c$) and $n\in\Z^+$, the asymptotic formula in
Lemma \ref{ZETANUNIFASYMPTLEM1} is sufficiently accurate so that it can be used as the initial value
in the Newton iteration algorithm solving for $\Phi_a(z)=0$, with rapid convergence.
Also, $\frac d{da}\zeta_n$ is computed using
\begin{align*}
\frac d{da}\zeta_n &=a\zeta_n^{1+\frac1a}e^{-i\zeta_n}\Bigl(\frac{\partial}{\partial a}\Phi_a(z)\Bigr)_{|z=\zeta_n}
\\
&=-a^{-2}\zeta_n^{1+\frac1a}e^{-i(\zeta_n+\frac{\pi}{2a})}\biggl(\Gamma'(-a^{-1})-\int_{-i\zeta_n}^\infty
e^{-u}u^{-1-\frac1a}(\log u)\,du\biggr),
\end{align*}
which most often can be evaluated very quickly via repeated integration by parts;
in the remaining cases we use numerical integration.

The data for the graph in Figure \ref{GRAPH} can be found in
\cite[density.dat]{sts}; it was assembled by computing $f(c)$ ($c=\frac12a$)
for $a=1+\frac1{100}k$, $k=1,2,\ldots,400$.
For each $a$-value we truncated the sum in \eqref{DENSITYEXPLFORMULA} at $|n|\leq400$
(using also the obvious $n\leftrightarrow -n$ symmetry).
It turns out that the terms in \eqref{DENSITYEXPLFORMULA} decay roughly as $n^{-2(1+\frac1a)}$ as $n\to\infty$
(cf.\ \eqref{BOUNDFORABSCONV1}).
In particular we have slower convergence for larger $a$ and this is seen in the computations:
Our numerics indicate that we obtain the first few $f(c)$-values to within an absolute error $\lesssim 10^{-11}$,
whereas for $a$ near $5$ (where $f(c)\approx 0.05$) the error is $\lesssim 10^{-6}$.
Of course the precision can be improved by including more terms in \eqref{DENSITYEXPLFORMULA},
again cf.\ \cite[numdensity.mpl]{sts}.

\begin{figure}
\begin{center}
\begin{minipage}{0.8\textwidth}
\unitlength0.1\textwidth
\begin{picture}(13,8)(2,0)
\put(1,8){\includegraphics[width=0.8\textwidth,angle=270]{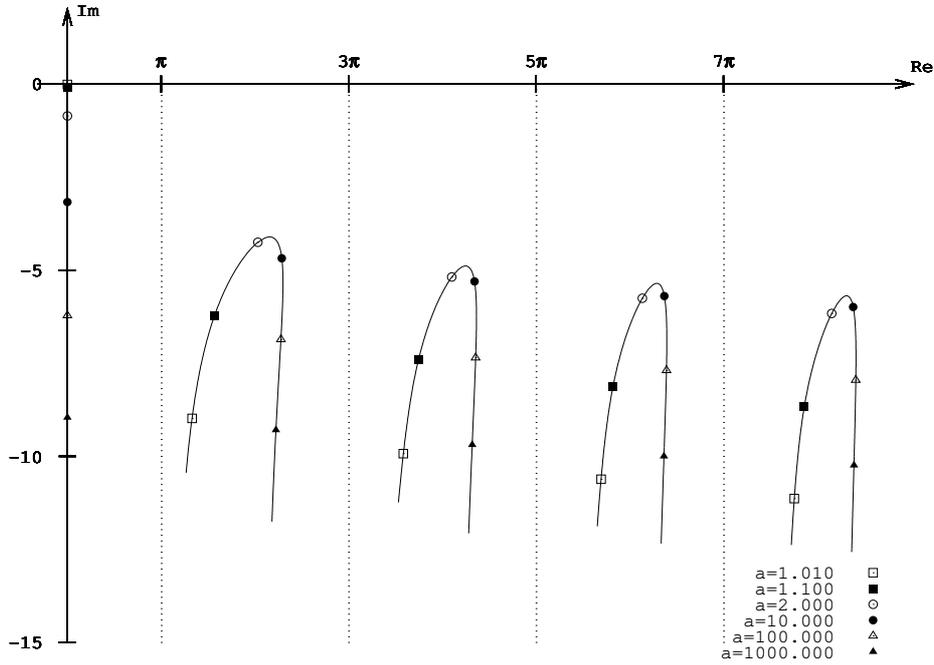}}
\end{picture}
\end{minipage}
\end{center}
\caption{The curves traced by the poles $\zeta_1,\zeta_2,\zeta_3,\zeta_4$ (and $\zeta_0$) as $a$ varies.}\label{ZEROTRACKINGFIG}
\end{figure}


\begin{thebibliography}{99}

\bibitem{bateman} P.\ T.\ Bateman, E.\ Grosswald,
On Epstein's zeta function,
Acta Arith.\ \textbf{9} (1964), 365--373.

%
%

\bibitem{bernays} P.\ Bernays, \textit{\"Uber die Darstellung von positiven, ganzen Zahlen durch die primitiven, binären quadratischen Formen einer nicht-quadratischen Diskriminante}, Dissertation, G\"ottingen, 1912.

\bibitem{billconv} P.\ Billingsley, \textit{Convergence of probability measures}, second edition, Wiley Series in Probability and Statistics, John Wiley \& Sons Inc., New York, 1999.

\bibitem{BohrJessen} H.\ Bohr, B.\ Jessen, \"Uber die {W}erteverteilung der {R}iemannschen {Z}etafunktion,
Acta Math.\ \textbf{54} (1930), no.\ 1, 1--35.

\bibitem{BG} E.\ Bombieri, A.\ Ghosh, Around the Davenport-Heilbronn function, Uspekhi Mat.\ Nauk \textbf{66} (2011), no.\ 2, 15--66; translation in 
Russian Math.\ Surveys \textbf{66} (2011), no.\ 2, 221--270. 

%

\bibitem{BH2} E.\ Bombieri, D.\ A.\ Hejhal, On the distribution of zeros of linear combinations of Euler products,
Duke Math.\ J.\ \textbf{80} (1995), no.\ 3, 821--862.

\bibitem{BM} E.\ Bombieri, J.\ Mueller, On the zeros of certain Epstein zeta functions, Forum Math.\ \textbf{20} (2008), no.\ 2, 359--385. 

\bibitem{DH} H.\ Davenport, H.\ Heilbronn, On the zeros of certain Dirichlet series, J.\ London Math.\ Soc.\ \textbf{11} (1936), 181--185, 307--312.

\bibitem{EK} A.\ Edelman, E.\ Kostlan, How many zeros of a random polynomial are real?,
Bull.\ Amer.\ Math.\ Soc.\ (N.S.) \textbf{32} (1995), no.\ 1, 1--37. 

\bibitem{feller} W.\ Feller, \textit{An introduction to probability theory and its applications. {V}ol. {II}},
second edition, John Wiley \& Sons Inc., New York, 1971.

\bibitem{GonekLee} S.\ Gonek, Y.\ Lee, Zero-density estimates for Epstein zeta functions, preprint 2015, arXiv:1511.06824.

%

\bibitem{hejhal2000} D.\ A.\ Hejhal, On a result of {S}elberg concerning zeros of linear combinations of {$L$}-functions,
Internat.\ Math.\ Res.\ Notices (2000), no.\ 11, 551--577.

\bibitem{Jessen1933} B.\ Jessen, \"Uber die Nullstellen einer analytischen fastperiodischen Funktion. Eine Verallgemeinerung der Jensenschen Formel, Math.\ Ann.\ \textbf{108} (1933), no.\ 1, 485--516.

\bibitem{Jessen1934} B.\ Jessen, The theory of integration in a space of an infinite number of dimensions,
Acta Math.\ \textbf{63} (1934), no.\ 1, 249--323.

\bibitem{JS} M.\ Jutila, K.\ Srinivas, Gaps between the zeros of Epstein's zeta-functions on the critical line, Bull.\ London Math.\ Soc.\ \textbf{37} (2005), no.\ 1, 45--53. 

\bibitem{Kallenberg} O.\ Kallenberg, \textit{Foundations of modern probability}, second edition, Probability and its Applications,
Springer-Verlag, New York, 2002.

\bibitem{KH} A.\ Katok, B.\ Hasselblatt, \textit{Introduction to the modern theory of dynamical systems}, Encyclopedia of Mathematics and its Applications \textbf{54}, Cambridge University Press, Cambridge, 1995.

%
%
%

%

\bibitem{lee} Y.\ Lee, On the zeros of Epstein zeta functions, Forum Math.\ \textbf{26} (2014), no.\ 6, 1807--1836. 

\bibitem{MRS} A.\ Mukhopadhyay, K.\ Rajkumar, K.\ Srinivas, On the zeros of the Epstein zeta function, \textit{Number theory}, 
Ramanujan Math.\ Soc.\ Lect.\ Notes Ser., \textbf{15}, Ramanujan Math.\ Soc., Mysore, 2011, pp.\ 73--87. 

\bibitem{pall} G.\ Pall, The distribution of integers represented by binary quadratic forms,
Bull.\ Amer.\ Math.\ Soc. \textbf{49} (1943), 447--449.

%
%

\bibitem{rudin} W.\ Rudin, \textit{Real and complex analysis},
third edition, McGraw-Hill, New York, 1987.

\bibitem{SamorodnitskyTaqqu} G.\ Samorodnitsky, M.\ S.\ Taqqu, \textit{Stable non-{G}aussian random processes},
Chapman \& Hall, New York, 1994.

\bibitem{SS} P.\ Sarnak, A.\ Str\"ombergsson,
Minima of Epstein's zeta function and heights of flat tori,
Invent.\ Math.\ \textbf{165} (2006), no.\ 1, 115--151.

%
%

\bibitem{siegel} C.\ L.\ Siegel,
A mean value theorem in geometry of numbers,
Ann.\ of Math.\ \textbf{46} (1945), 340--347.

\bibitem{stark} H.\ M.\ Stark,
On the zeros of Epstein's zeta function,
Mathematika \textbf{14} (1967), 47--55. 

\bibitem{steuding} J.\ Steuding, On the zero-distribution of Epstein zeta-functions, Math.\ Ann.\ \textbf{333} (2005), no.\ 3, 689--697. 

\bibitem{sts} A.\ Str\"ombergsson, A.\ S\"odergren, 
www2.math.uu.se/$\sim$astrombe/zerofree/zerofree.html
(this web-page contains the computer files associated with the present paper).

\bibitem{poisson} A.\ S\"odergren, On the Poisson distribution of lengths of lattice vectors in a random lattice, Math.\ Z.\ \textbf{269} (2011), no.\ 3--4, 945--954.

\bibitem{epstein1} A.\ S\"odergren, On the value distribution and moments of the Epstein zeta function to the right of the critical strip, J.\ Number Theory \textbf{131} (2011), no.\ 7, 1176--1208.

\bibitem{poisson2} A.\ S\"odergren, On the distribution of angles between the $N$ shortest vectors in a random lattice,
J.\ Lond.\ Math.\ Soc.\ (2) \textbf{84} (2011), no.\ 3, 749--764.

%

\bibitem{terras1} A.\ Terras,
Real zeroes of Epstein's zeta function for ternary positive definite quadratic forms,
Illinois J.\ Math.\ \textbf{23} (1979), no.\ 1, 1--14.

\bibitem{terras2} A.\ Terras,
Integral formulas and integral tests for series of positive matrices,
Pacific J.\ Math.\ \textbf{89} (1980), no.\ 2, 471--490. 

\bibitem{terras3} A.\ Terras,
The minima of quadratic forms and the behavior of Epstein and Dedekind zeta functions,
J.\ Number Theory \textbf{12} (1980), no.\ 2, 258--272.

\bibitem{terras} A.\ Terras, \textit{Harmonic analysis on symmetric spaces and applications II}, Springer-Verlag, Berlin, 1988.

%
%

\bibitem{wintner} A.\ Wintner, Upon a statistical method in the theory of Diophantine approximations,
Amer.\ J.\ Math.\ \textbf{55} (1933), no.\ 1--4, 309--331.



\end{thebibliography}
\end{document}